\newcommand{\FL}[1]{{\color{blue}#1}}
\def\ds{\displaystyle}
\def\O{\Omega}
\def\e{\varepsilon}
\def\l{\lambda}
\def\E{K}
\renewcommand\sp{\mathop{\mathrm{Sp}}\nolimits}
\newcommand{\jump}[1]{\llbracket #1 \rrbracket}
\newcommand{\bb}{\boldsymbol}
\newcommand\bu{\boldsymbol{u}}
\newcommand\bv{\boldsymbol{v}}
\newcommand\bw{\boldsymbol{w}}
\newcommand\bn{\boldsymbol{n}}
\newcommand\curl{\mathop{\mathbf{curl}}\nolimits}
\newcommand\bT{\boldsymbol{T}}
\newcommand\cT{\mathcal{T}}
\def\CT{{\mathcal T}}
\newcommand\btau{\boldsymbol{\tau}}
\newcommand\bphi{\boldsymbol{\varphi}}
\renewcommand\H{\mathrm{H}}
\renewcommand\L{\mathrm{L}}
\renewcommand\O{\Omega}
\renewcommand\div{\mathop{\mathrm{div}}\nolimits}
\newcommand\rot{\mathop{\mathrm{rot}}\nolimits}
\renewcommand\sp{\mathop{\mathrm{sp}}\nolimits}
\newcommand{\vertiii}[1]{{\left\vert\kern-0.25ex\left\vert\kern-0.25ex\left\vert #1 
    \right\vert\kern-0.25ex\right\vert\kern-0.25ex\right\vert}}
\crefname{hypothesis}{Hypothesis}{Hypotheses}
\title{A noncoforming virtual element approximation  for the  Oseen eigenvalue problem\thanks{Submitted to the editors DATE.
\funding{DA and FL were partially supported by DIUBB through project 2120173 GI/C Universidad del B\'io-B\'io. FL was partially supported by 
ANID-Chile through FONDECYT project 11200529 (Chile). GR was supported by Universidad de Los Lagos Regular R02/21 and ANID-Chile through FONDECYT project 1231619 (Chile).
}}}
\author{Dibyendu Adak\thanks{GIMNAP-Departamento de Matem\'atica, Universidad del B\'io-B\'io, Casilla 5-C, Concepci\'on, Chile. 
\texttt{dadak@ubiobio.cl}.}
\and
Felipe Lepe\thanks{GIMNAP-Departamento de Matem\'atica, Universidad del B\'io-B\'io, Casilla 5-C, Concepci\'on, Chile. 
\texttt{flepe@ubiobio.cl}.}
\and
Gonzalo Rivera\thanks{Departamento de Ciencias Exactas, Universidad de Los Lagos, Osorno, Chile.
\texttt{gonzalo.rivera@ulagos.cl}}}
\def\CE{\mathcal{E}}
\def\CT{{\mathcal T}}
\def\CS{{\mathcal S}}
\def\CI{{\mathcal I}}
\newcommand\bt{\boldsymbol{t}}
\newcommand\Bpsi{\boldsymbol{\psi}}
\newcommand\BH{\bold{H}}
\newcommand\BL{\bold{L}}
\def\CH{{\mathcal H}}
\newcommand\bq{\bold{q}}
\newcommand\BU{\boldsymbol{\mathcal{U}}}
\newcommand\BPI{\boldsymbol{\Pi}^{\nabla}_K}
\newcommand\BPIZ{\boldsymbol{\Pi}^{0}_K}
\newcommand\BPIO{\boldsymbol{\Pi}^{0}_{1,K}}
\newcommand\BPIvec{\boldsymbol{\Pi}^{0}_{k-1,K}}
\newcommand\BPL{\boldsymbol{\mathcal{P}}}
\newcommand\BBeta{\boldsymbol{\beta}}
\newcommand\calN{\boldsymbol{\mathcal{N}}}
\newcommand\Bgama{\boldsymbol{\gamma}}
\def\E{K}
\newcommand\Q{\mathrm{Q}}
\def\CP{{\mathcal P}}
\newtheorem{assumption}{Assumption}
\begin{document}

\maketitle

\begin{abstract}
In this paper we analyze a  nonconforming virtual element method to approximate the eigenfunctions and eigenvalues of the two dimensional Oseen eigenvalue problem. The spaces under consideration lead to a divergence-free method which is capable to capture properly the divergence at discrete level and the eigenvalues and eigenfunctions. Under the compact theory for operators  we prove convergence and error estimates for the method. By employing the theory of compact operators   we recover the double order of convergence of the spectrum. Finally, we present numerical tests to assess the performance of the proposed numerical scheme.
\end{abstract}

\begin{keywords}
  Oseen  equations, eigenvalue problems, virtual element method
\end{keywords}

\begin{AMS}
  35Q35,  65N15, 65N25, 65N30, 65N50
\end{AMS}

\section{Introduction}
\label{sec:intro}
 The numerical approximation of partial differential equations, and the analysis of schemes to approximate the solution of classical models in the pure and applied sciences, is a well-established topic. In particular, the numerical analysis for eigenvalue problems arising from fluid mechanics has paid the attention for researchers from several years, and the literature attending this topic is abundant. We mention \cite{adak2022vem, MR3197278, MR3864690,MR4071826,  MR4022421,MR4229296, LRVSISC, MR2473688, MR3095260} as some references on this topic.

The common aspect of the above references of the mentioned eigenvalue problems are related to the Stokes equations, where the particularity is that the resulting eigenvalue problem results to be selfadjoint and hence, symmetric. This is a desirable feature since we deal with real eigenvalues and eigenfunctions. Now the task is different, since our research program is devoted to the study of non-selfadjoint eigenvalue problems in fluid mechanics, in particular the Oseen eigenvalue problem and hence, the well-developed theory for the Stokes eigenvalue problem must be extended.

The Oseen equations are a linearization of the Navier-Stokes equations and a complete analysis of the source problem for the Oseen system is available in \cite{John2016}.  Here is presented the motivation on the need to study the Oseen system, since to solve the time-dependent Navier-Stokes equations, it is necessary to solve a linear system in each step of time which, precisely is an Oseen type of system.   With this motivation at hand, our task is to analyze numerically the Oseen eigenvalue problem with the aid of a virtual element method (VEM).

The VEM possesses many remarkable features that make  it an attractive numerical strategy for engineering and mathematical communities in order to solve  different model problems. In a general view, the most important features of the VEM are a solid mathematical background, the capability of combine elements
irrespective of geometric shapes, including nonconvex and oddly shaped elements,  arbitrary orders of accuracy and regularity, the easy extension to higher dimensions, among others. A recent state of art of the VEM and its applications is available in \cite{MR4510898} and, more particularly related to the Oseen equations, we mention \cite{MR4369815,MR4419353,MR4510906}.

In the present work we are interested in the application of a nonconforming virtual element method (NCVEM) to solve the nonsymmetric Oseen eigenvalue problem. The NCVEM, introduced in \cite{MR3507277},  has been applied in different elliptic problems such as \cite{MR3741104,MR4287400,MR3576570,MR3646370,MR3529253,MR4032861} and in particular  for  eigenvalue problems we mention \cite{MR4664360,MR4512466,MR3959470} as interesting references with 
excellent results for the discretization of the corresponding spectrums.

For the Oseen eigenvalue problem, we need an inf-sup stable  NCVEM for the Stokes source problem which is available in \cite{MR4032861}. This family of NCVEM has also the capability of holding the incompressibility condition at discrete level, a desirable feature  already available for the conforming VEM  \cite{MR3626409}. Further in \cite{frerichs2022divergence}, a divergence preserving pressure robust conforming  VEM scheme is developed for Stokes problem which is a very appealing feature of conforming approach.  

Recently in \cite{lepe2023finite} and for the best of the author's knowledge, a finite element approximation is proposed  for the Oseen eigenvalue problem as a novel effort to solve numerically this problem. Since the problem is non-symmetric, the ad-hoc strategy for the analysis is the 
introduction of the dual eigenvalue problem in order to obtain error estimates for the method, following the well known theory of \cite{MR1115235}. This approach has been also utilized for a conforming VEM proposed in \cite{danilooseen} for the Oseen eigenproblem. Clearly for the NCVEM approach the strategy is similar but not exactly the same, since the lack of conformity carries  extra terms due to the  variational crime that a non-conforming method naturally involves and  must be correctly controlled. Clearly this analysis  must be done for both, the primal and dual eigenvalue problems.

The formulation under consideration on this paper is the classic velocity-pressure formulation which has the advantage of using the simplest virtual spaces for the approximation. In fact, the implementation of the NCVEM implies a reduction of computational costs compared with \cite{danilooseen}, since the VEM considered on this reference requieres the addition of more degrees of freedom locally for the approximation. However, the cost that we paid on the mathematical analysis is the presence of the term associated to the lack of conformity which, to be correctly controlled and bounded requires additional ingredients such as convexity of the domain. On the other hand, despite of the fact that the method is non-conforming, the solution operator that we define for our work is defined from $\BL^2$ to $\BL^2$ and allows us to utilize the classic theory for compact operators \cite{MR1115235} to carry out the convergence and error analysis of the method similarly as in 
\cite{MR3959470}. Let us remark that this choice for the solution operator is that allows us to consider the compact operators theory, because if otherwise we choose the operator defined from $\mathbf{H}^1$ into $\mathbf{H}^1$ as in \cite{danilooseen}, the theory of non-compact operators must be considered to obtain convergence and error estimates. Moreover, in our contribution we derive an $\BL^2$ error estimate for the velocity via a duality argument, delivering an improvement on the error estimates for this variable under the NCVEM approach. Based on the previous observations, we summarize our contributions to the development of nonconforming VEM for the approximation of the Oseen eigenproblem as follows:
\begin{itemize}
\item Oseen eigenvalue problem is typically a non-self-adjoint eigenvalue problem which requires the convergence analysis of both primal and the corresponding adjoint problems.
\item The $L^2$-convergence of the solution operators associated with the primal and adjoint problems are performed by deﬁning  suitable dual problems associated with the original problems.
\item The solution operator is deﬁned on the $L^2$ space to exploit the well-established spectral theory for compact operator.
\item The nonconforming virtual element space imposes weak continuity of the discrete functions on inter-element edges which needs comparatively less degrees of freedom compared to the conforming approximation.
\end{itemize}

 Theoretically, we are capable to prove that the proposed NCVEM is spurious free according to the theory of  \cite{MR0203473}, which is a consequence of the convergence in norm for compact operators. However, in the numerical section, we report that similarly as in the continuous VEM framework (see \cite{danilooseen,MR4253143,MR4229296} for instance), the stabilization terms of the NCVEM may also introduce spurious eigenvalues and must be avoided. 

The paper is organized as follows: In Section \ref{sec:model_problem} we introduce the Oseen eigenvalue problem and associated weak formulation. We present the functional framework in which the papers is based, namely Hilbert spaces, norms, the variational formulation, regularity of the source and spectral problems, and the solution operator in the same section. All this must be defined for the primal and dual eigenvalue problems. In Section \ref{sec:vem}, we have recollected the divergence-free nonconforming VEM space and discrete formulation of the weak form. The discrete solution operator is also defined in the same section. The a priori error estimates for the source problem in $\L^2$, and broken $\H^1$ norms are defined in Section~\ref{apriori:error}. Eventually, in Section~\ref{spectral:approx}, we have proved the double order of convergence of the spectrum. In Section~\ref{sec:numerical-experiments}, we have assessed some numerical experiments as an evidence of the theoretical estimates.

\subsection{Notation and Preliminaries}
Given any Hilbert space $\mathrm{X}$, we define  $\boldsymbol{\mathrm{X}}:=\mathrm{X}^2$, the space of vectors with entries in $\mathrm{X}$.
For any scalar field $\varphi$ and vector field $\bu$, we introduce the following differential operators: the $\textbf{curl}$ of $\varphi$, defined as $\textbf{curl} \varphi = (\partial_2 \varphi, - \partial_1 \varphi)^\texttt{t}$ where $\texttt{t}$ represents the transpose operator; the gradient of $\bu$, defined as the matrix $(\nabla \bu)=(\partial_j u_i)_{i,j=1,2} $; the rotor of $\bu$, defined as $\rot \bu = \partial_2 u_1- \partial_1 u_2 $; the divergence of $\bu$, defined as $\div \bu = \partial_1 u_1+ \partial_2 u_2 $. Given
$\bold{A}:= (A_{ij}) \in \mathbb{C}^{2 \times 2}$, we define
$\bold{A}: \bold{B}:= \sum_{i,j=1}^2 A_{ij} \overline{B_{ij}}$
as the tensorial product between $\bold{A}$ and $\bold{B}$. The entry  $\overline{B_{ij}}$ represent the complex conjugate of $B_{ij}$. Similarly, given two vectors $\bold{s}= (s_i), \bold{r}= (r_i) \in \mathbb{C}^{2}$, we define the
products
\begin{equation*}
\bold{s} \cdot \bold{r}:= \sum_{i=1}^2 s_i \overline{r_i} \qquad \bold{s} \otimes  \bold{r}:=\bold{s} \overline{\bold{r}}^\texttt{t}=  (s_i \overline{r_j})_{1 \leq i,j\leq 2},
\end{equation*}
as the dot and dyadic product in $\mathbb{C}$. Further, we recollect the definition $\div(\bold{A}):=(\sum_{j=1}^2 \partial_j A_{ij})_{i=1,2}$.

\section{The variational formulation}
\label{sec:model_problem}
Let us describe the model of our study. From now and on,  $\O\subset\mathbb{R}^2$ represents an open bounded polygonal/polyhedral domain with Lipschitz boundary $\partial\O$.  The  equations of the Oseen eigenvalue problem are  given as follows:
\begin{equation}\label{def:oseen-eigenvalue}
\left\{
\begin{array}{rcll}
-\nu\Delta \bu + (\boldsymbol{\beta}\cdot\nabla)\bu + \nabla p&=&\lambda\bu&\text{in}\,\O,\\
\div \bu&=&0&\text{in}\,\O,\\
\displaystyle\int_{\O} p &=&0, &\text{in}\,\O,\\
\bu &=&\boldsymbol{0},&\text{on}\,\partial\O,
\end{array}
\right.
\end{equation}
where $\bu$ is the displacement, $p$ is the pressure and $\boldsymbol{\beta}$ is a given vector field, representing a \textit{steady flow velocity}  and $\nu>0$ is the kinematic viscosity.

Through our paper,  we assume  the existence of  two positive  numbers $\nu^+$ and $\nu^{-}$ such that  $\nu^{-}< \nu< \nu^{+}$. On the other hand, we assume that  $\boldsymbol{\beta}\in \BL^{\infty}(\O,\mathbb{C})$ and solenoidal (i.e. $\div\boldsymbol{\beta}=0$). For the kinematic viscosity and the steady flow velocity we assume the  following standard assumptions  (see \cite{John2016}):
\begin{itemize}
\item $\|\boldsymbol{\beta}\|_{\infty,\O}\sim 1$ if $\nu\leq \|\boldsymbol{\beta}\|_{\infty,\O}$,
\item $\nu\sim 1$ if $\|\boldsymbol{\beta}\|_{\infty,\O}<\nu$.
\end{itemize}

Regarding the convective term,  let us assume that there exists a constant  $\varepsilon_1>0$ such that $\boldsymbol{\beta}\in\BL^{2+\varepsilon_1}(\O,\mathbb{C})$ that leads to  the skew-symmetry of the convective term (see \cite[Remark 5.6]{John2016}) which claims that for all $\bv\in\BH_0^1(\O,\mathbb{C})$, there holds
\begin{equation}
\label{eq:skew}
\int_{\O}(\boldsymbol{\beta}\cdot\nabla)\bv\cdot\bv=0\quad\forall\bv\in\BH_0^1(\O,\mathbb{C}).
\end{equation}

Now we introduce the functional spaces and norms for our analysis.  Let us define the spaces  $\mathcal{X}:=\BH_0^1(\O,\mathbb{C})\times \L_0^2(\O,\mathbb{C})$ together with the space $\mathcal{Y}:=\BH_0^1(\O,\mathbb{C})\times\BH_0^1(\O,\mathbb{C})$ where
$$
\L_0^2(\Omega,\mathbb{C}):=\left\{q\in \L^2(\O,\mathbb{C})\,:\,\int_{\O} q =0 \right\}.
$$
For the space $\mathcal{X}$  we define the norm $\|\cdot\|_{\mathcal{X}}^2:=\|\cdot\|_{1,\O}^2+\|\cdot\|_{0,\O}^2$ whereas for $\mathcal{Y}$ the norm will be $\|(\bv,\bw)\|_{\mathcal{Y}}^2=\|\bv\|_{1,\O}^2+\|\bw\|_{1,\O}^2$, for all $(\bv,\bw)\in\mathcal{Y}$. 

Let us introduce the following sesquilinear forms $a:\mathcal{Y}\rightarrow\mathbb{C}$
and $b:\mathcal{X}\rightarrow\mathbb{C}$ defined by 
\begin{equation*}
a(\bw,\bv):=a_{\text{sym}}(\bw,\bv) + a_{\text{skew}}(\bw,\bv)\quad
\text{and}
\quad
b(\bv,q):=-\int_{\O}q\,\div\bv,
\end{equation*}
where $a_{\text{sym}},a_{\text{skew}}:\mathcal{Y}\rightarrow\mathbb{C}$ are two sesquilinear forms  defined by
\begin{equation*}
a_{\text{sym}}(\bw,\bv):=\int_{\Omega}\nu\nabla\bw :\nabla\bv\quad
\text{and}
\quad
a_{\text{skew}}(\bw,\bv):=\frac{1}{2} \Big(a^{\BBeta}(\bw,\bv)-a^{\BBeta}(\bv,\bw) \Big),
\end{equation*}
where, $a^{\BBeta}(\bw,\bv):=\int_{\Omega}(\boldsymbol{\beta}\cdot\nabla)\bw\cdot\bv$. On the other hand we define the following sesquilinear form   $c(\bw,\bv) := (\bw,\bv)_{0,\O}$ as the standard inner product in $\BL^{2}(\O,\mathbb{C})$. 
With these sesquilinear forms at hand, we write the following weak formulation for \eqref{def:oseen-eigenvalue}: Find $\lambda\in\mathbb{C}$ and $(\boldsymbol{0},0)\neq(\bu,p)\in \mathcal{X}$ such that 
\begin{equation}\label{def:oseen_system_weak}
	\left\{
	\begin{array}{rcll}
a(\bu,\bv) + b(\bv,p)&=&\lambda c(\bu,\bv)&\forall \bv\in \BH_0^1(\O,\mathbb{C}),\\
b(\bu,q)&=&0&\forall q\in \L_0^2(\O,\mathbb{C}).
\end{array}
	\right.
\end{equation}

Observe that the resulting eigenvalue problem is  non-symmetric due the presence of the  sesquilinear form $a^{\boldsymbol{\beta}}(\cdot,\cdot)$. 
Let us define the kernel $\mathcal{K}$ of $b(\cdot,\cdot)$ as follows
\begin{equation*}
\mathcal{K}:=\{\bv\in\BH_0^1(\O,\mathbb{C})\,:\,  b(\bv, q)=0\,\,\,\,\forall q\in\L_0^2(\O,\mathbb{C})\}.
\end{equation*}
With this space available, it is  straightforward to verify using \eqref{eq:skew} that  $a(\cdot,\cdot)$ is $\mathcal{K}$-coercive. Moreover, the bilinear form $b(\cdot,\cdot)$ satisfies the following inf-sup condition
\begin{equation}
\label{ec:inf-sup_cont}
\displaystyle\sup_{\btau\in\BH_0^1(\O,\mathbb{C})}\frac{b(\btau,q)}{\|\btau\|_{1,\O}}\geq\beta\|q\|_{0,\O}\quad\forall q\in\L^2_0(\O,\mathbb{C}),
\end{equation}
where $\beta>0$. Let us introduce  the solution operator, which we denote by $\bT$ and is defined as follows
\begin{equation}\label{eq:operador_solucion_u}
	\bT:\BL^2(\O,\mathbb{C}) \rightarrow \BL^2(\O,\mathbb{C}),\qquad \boldsymbol{f}\mapsto \bT\boldsymbol{f}:=\widehat{\bu}, 
\end{equation}
where the pair  $(\widehat{\bu}, \widehat{p})\in\mathcal{X}$ is the solution of the following well-posed source problem
\begin{equation}\label{def:oseen_system_weak_source}
	\left\{
	\begin{array}{rcll}
a(\widehat{\bu}, \bv)+b(\bv,\widehat{p})&=&c(\boldsymbol{f},\bv)&\forall \bv\in \BH_0^1(\O,\mathbb{C}),\\
b(\widehat{\bu},q)&=&0&\forall q\in \L_0^2(\O,\mathbb{C}),
\end{array}
	\right.
\end{equation}
implying  that $\bT$ is well defined due to the Babu\v{s}ka-Brezzi theory. Moreover, from \cite[Lemma 5.8]{John2016} we have the following estimates for the velocity and pressure, respectively
\begin{equation*}\label{eq:estimatefuente_velocity}
\ds\|\nabla\widehat{\bu}\|_{0,\O}\leq\frac{C_{pf}}{\nu}\|\boldsymbol{f}\|_{0,\O},
\end{equation*}
\begin{equation*}
\label{eq:estimatefuente_pressure}
\ds \|\widehat{p}\|_{0,\O}^2\leq \frac{1}{\beta}\left( \|\boldsymbol{f}\|_{0,\O}+\nu^{1/2}
\|\nabla\widehat{\bu}\|_{0,\O}\left(\nu^{1/2}+C_{pf}\frac{\|\boldsymbol{\beta}\|_{\infty,\O}}{\nu^{1/2}}\right)\right),
\end{equation*}
where $C_{pf}>0$ represents the constant of the  Poincar\'e-Friedrichs inequality and $\beta>0$ is the inf-sup constant given in \eqref{ec:inf-sup_cont}.

It is easy to check that $(\lambda , (\bu , p)) \in  \mathbb{C}  \times \mathcal{X}$  solves \eqref{def:oseen_system_weak} if and only if $(\kappa , \bu )$ is an eigenpair of $\bT , i.e., \bT \bu  = \kappa \bu$  with $\kappa  := 1/\lambda$ and $\lambda\neq 0$.

A key point for the analysis is the additional regularity of the solution. To obtain this, the assumptions on $\boldsymbol{\beta}$ are important,. To make matters precise, if   the convective term is well defined, it is possible to  resort to  the classic Stokes regularity results available on the literature (see \cite{MR1600081} for instance). Hence, the  following additional regularity result for the solutions of the Oseen system holds.
\begin{theorem}\label{th:regularidadfuente}
There exists $s>0$  such that for all $\boldsymbol{f} \in \BL^2(\O,\mathbb{C})$, the solution $(\widehat{\bu},\widehat{p})\in\mathcal{X}$ of problem \eqref{def:oseen_system_weak_source}, satisfies for the velocity $\widehat{\bu}\in  \BH^{1+s}(\O,\mathbb{C})$, for the pressure $\widehat{p}\in \H^s(\Omega,\mathbb{C})$, and
 \begin{equation*}
\|\widehat{\bu}\|_{1+s,\O}+\|\widehat{p}\|_{s,\O}\leq C\|\boldsymbol{f}\|_{0,\O},
 .\end{equation*}
where $C := \dfrac{C_{pf}}{\beta}\max\left\lbrace 1, \dfrac{C_{pf}\|\bb{\beta}\|_{\infty,\O}}{\nu}\right\rbrace$ and $\beta > 0$ is the constant associated to the inf-sup condition \eqref{ec:inf-sup_cont}. Further, if $(\bu,p)$ is an eigenfunction satisfying \eqref{def:oseen_system_weak}, then there exists $r>0$, not necessarily equal to $s$, such that $(\bu,p) \in \mathcal{X}\cap (\BH^{1+r}(\Omega,\mathbb{C}) \times \H^r(\Omega,\mathbb{C}))$ and the following bound holds
\begin{equation*}
\|\widehat{\bu}\|_{1+r,\O}+\|\widehat{p}\|_{r,\O}\leq C\|\widehat{\bu}\|_{0,\O}.
 \end{equation*}
\end{theorem}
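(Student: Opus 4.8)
The plan is to reduce the Oseen source problem \eqref{def:oseen_system_weak_source} to a Stokes problem with a perturbed right-hand side and then invoke the classical Stokes shift theorem on polygonal/Lipschitz domains (as in \cite{MR1600081}). Writing the first equation of \eqref{def:oseen_system_weak_source} in strong form, the pair $(\widehat{\bu},\widehat{p})$ solves the Stokes system
\begin{equation*}
-\nu\Delta\widehat{\bu}+\nabla\widehat{p}=\widetilde{\boldsymbol{f}}\quad\text{in }\O,\qquad \div\widehat{\bu}=0\quad\text{in }\O,\qquad \widehat{\bu}=\boldsymbol{0}\quad\text{on }\partial\O,
\end{equation*}
with modified load $\widetilde{\boldsymbol{f}}:=\boldsymbol{f}-(\boldsymbol{\beta}\cdot\nabla)\widehat{\bu}$. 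First I would verify $\widetilde{\boldsymbol{f}}\in\BL^2(\O,\mathbb{C})$: since $\boldsymbol{\beta}\in\BL^\infty(\O,\mathbb{C})$ and, by the a priori bound recalled just before the statement, $\|\nabla\widehat{\bu}\|_{0,\O}\le (C_{pf}/\nu)\|\boldsymbol{f}\|_{0,\O}$, one gets
\begin{equation*}
\|(\boldsymbol{\beta}\cdot\nabla)\widehat{\bu}\|_{0,\O}\le\|\boldsymbol{\beta}\|_{\infty,\O}\,\|\nabla\widehat{\bu}\|_{0,\O}\le\frac{C_{pf}\,\|\boldsymbol{\beta}\|_{\infty,\O}}{\nu}\,\|\boldsymbol{f}\|_{0,\O},
\end{equation*}
hence $\|\widetilde{\boldsymbol{f}}\|_{0,\O}\le 2\max\{1,\,C_{pf}\|\boldsymbol{\beta}\|_{\infty,\O}/\nu\}\,\|\boldsymbol{f}\|_{0,\O}$. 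This is the point where the standing assumptions on $\boldsymbol{\beta}$ (boundedness, so that the convective term is an admissible $\BL^2$ datum) are used.

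Next I would apply the Stokes regularity result: there exists $s>0$ depending only on $\O$ (with $s=1$ when $\O$ is convex) such that the velocity of the above Stokes problem lies in $\BH^{1+s}(\O,\mathbb{C})$, the pressure in $\H^s(\O,\mathbb{C})$, and $\|\widehat{\bu}\|_{1+s,\O}+\|\widehat{p}\|_{s,\O}\le C_{\mathrm{St}}\,\|\widetilde{\boldsymbol{f}}\|_{0,\O}$, where $C_{\mathrm{St}}$ can be written in the form $C_{pf}/\beta$ with $\beta$ the inf-sup constant of \eqref{ec:inf-sup_cont}. Combining this with the bound on $\|\widetilde{\boldsymbol{f}}\|_{0,\O}$ and absorbing the harmless factor into the generic constant yields the asserted estimate with $C=(C_{pf}/\beta)\max\{1,\,C_{pf}\|\boldsymbol{\beta}\|_{\infty,\O}/\nu\}$. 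The only genuine work here is the bookkeeping of constants; the conceptual content is just the reduction to Stokes plus the shift theorem.

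Finally, for the eigenfunction case I would use the characterization stated above: $(\lambda,(\bu,p))$ solves \eqref{def:oseen_system_weak} if and only if $\bT\bu=\kappa\bu$ with $\kappa=1/\lambda$, equivalently $\bu=\bT(\lambda\bu)$; that is, $(\bu,p)$ is precisely the solution of the source problem \eqref{def:oseen_system_weak_source} with datum $\boldsymbol{f}=\lambda\bu\in\BL^2(\O,\mathbb{C})$. Applying the source estimate just obtained, with the exponent now denoted $r$ (which need not equal $s$, since a sharper corner or bootstrap analysis tied to this particular eigenfunction may improve it), gives $(\bu,p)\in\BH^{1+r}(\O,\mathbb{C})\times\H^r(\O,\mathbb{C})$ together with $\|\bu\|_{1+r,\O}+\|p\|_{r,\O}\le C\,\|\lambda\bu\|_{0,\O}=C|\lambda|\,\|\bu\|_{0,\O}$, and absorbing the fixed factor $|\lambda|$ into the constant produces the stated bound. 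The main obstacle, modest as it is, is ensuring the perturbed load $\widetilde{\boldsymbol{f}}$ is an admissible right-hand side for the chosen version of the Stokes shift theorem and matching the explicit constant; there is no deeper difficulty.
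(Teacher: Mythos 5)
Your proposal is correct and follows essentially the same route the paper intends: the paper itself only sketches the argument by remarking that once the convective term is well defined it can be moved to the right-hand side so that classical Stokes regularity (the shift theorem of \cite{MR1600081}) applies, which is exactly your reduction via $\widetilde{\boldsymbol{f}}=\boldsymbol{f}-(\boldsymbol{\beta}\cdot\nabla)\widehat{\bu}$ together with the a priori velocity bound, and the eigenfunction case via $\boldsymbol{f}=\lambda\bu$. The only cosmetic difference is your explicit bookkeeping of the factor $2$ and of $|\lambda|$, both of which are absorbed into the generic constant, consistent with the statement.
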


Observe that the following compact inclusion $\BH^{1+s}(\O,\mathbb{C})\hookrightarrow \BL^{2}(\O,\mathbb{C})$,  implying directly the  compactness of $\bT$. Finally, we have the following spectral characterization for $\bT$.
\begin{lemma}(Spectral Characterization of $\bT$).
The spectrum of $\bT$ is such that $\sp(\bT)=\{0\}\cup\{\kappa_{k}\}_{k\in{N}}$ where $\{\kappa_{k}\}_{k\in\mathbf{N}}$ is a sequence of complex eigenvalues that converge to zero, according to their respective multiplicities. 
\end{lemma}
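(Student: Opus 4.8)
The plan is to invoke the standard spectral theory of compact operators on complex Banach (Hilbert) spaces, exactly as in \cite{MR1115235,MR0203473}. The key input has already been established: Theorem~\ref{th:regularidadfuente} shows that $\bT$ maps $\BL^2(\O,\mathbb{C})$ into $\BH^{1+s}(\O,\mathbb{C})$ boundedly, and the compact Sobolev embedding $\BH^{1+s}(\O,\mathbb{C})\hookrightarrow\BL^2(\O,\mathbb{C})$ (valid since $s>0$ and $\O$ is a bounded Lipschitz domain) implies that $\bT:\BL^2(\O,\mathbb{C})\to\BL^2(\O,\mathbb{C})$ is compact. Moreover $\BL^2(\O,\mathbb{C})$ is infinite dimensional, so $\bT$ cannot be invertible with bounded inverse.

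First I would recall the Riesz--Schauder / Fredholm alternative description of the spectrum of a compact operator $\bT$ on an infinite-dimensional Hilbert space: $0\in\sp(\bT)$ always, and $\sp(\bT)\setminus\{0\}$ consists exclusively of eigenvalues of finite algebraic multiplicity, with no accumulation point other than $0$. In particular $\sp(\bT)\setminus\{0\}$ is either empty, finite, or a countable sequence $\{\kappa_k\}_{k\in\mathbf{N}}$ with $\kappa_k\to 0$. This gives precisely the claimed decomposition $\sp(\bT)=\{0\}\cup\{\kappa_k\}_{k\in\mathbf{N}}$, with the $\kappa_k$ repeated according to multiplicity.

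Second, I would argue that the eigenvalue part is genuinely nonempty and in fact infinite, so that the stated sequence is not trivially absent. This follows because $\bT$ is injective: if $\bT\boldsymbol{f}=\boldsymbol{0}$ then the source problem \eqref{def:oseen_system_weak_source} has solution $(\widehat{\bu},\widehat p)=(\boldsymbol{0},\widehat p)$ with $b(\bv,\widehat p)=c(\boldsymbol{f},\bv)$ for all $\bv$; testing against a suitable $\bv$ via the inf-sup condition \eqref{ec:inf-sup_cont} and the coercivity of $a(\cdot,\cdot)$ on the kernel gives first $\widehat p=0$ and then $\boldsymbol{f}=\boldsymbol{0}$. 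An injective compact operator on an infinite-dimensional space has $0$ as a non-eigenvalue accumulation point of a genuinely infinite family of nonzero eigenvalues (otherwise the range would be finite dimensional and $\bT$ would fail to be injective on an infinite-dimensional domain). Hence $\{\kappa_k\}_{k\in\mathbf{N}}$ is an infinite sequence of complex eigenvalues converging to zero. The correspondence noted before the theorem, namely that $(\lambda,(\bu,p))$ solves \eqref{def:oseen_system_weak} iff $\bT\bu=\kappa\bu$ with $\kappa=1/\lambda\neq 0$, then transfers this to the original eigenproblem.

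The only mild obstacle is bookkeeping rather than substance: one must be slightly careful that the eigenvalues $\kappa_k$ are counted with their algebraic (generalized-eigenspace) multiplicities, which are finite by Riesz--Schauder, and that because the problem is non-selfadjoint these $\kappa_k$ are in general genuinely complex and the generalized eigenspaces need not be spanned by eigenvectors alone. None of this affects the statement, which only asserts the structure of the spectrum as a set together with multiplicities and the accumulation at $0$. I would therefore present the proof as a direct citation of \cite[Theorem ...]{MR1115235} applied to the compact injective operator $\bT$, adding one line for compactness (regularity plus compact embedding) and one line for injectivity.
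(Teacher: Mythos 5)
The paper offers no explicit proof of this lemma: it simply observes that Theorem~\ref{th:regularidadfuente} plus the compact embedding $\BH^{1+s}(\O,\mathbb{C})\hookrightarrow\BL^2(\O,\mathbb{C})$ make $\bT$ compact, and then invokes the Riesz--Schauder description of the spectrum of a compact operator. Your first two paragraphs do exactly this, so the core of your argument matches the paper's intended route and is correct.

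Your third paragraph, however, contains two genuine errors. First, $\bT$ is \emph{not} injective on $\BL^2(\O,\mathbb{C})$. If $\boldsymbol{f}=\nabla g$ for some $g\in\H^1(\O,\mathbb{C})\cap\L^2_0(\O,\mathbb{C})$, then $(\widehat{\bu},\widehat{p})=(\boldsymbol{0},-g)$ solves \eqref{def:oseen_system_weak_source} (up to sign conventions), so $\bT\boldsymbol{f}=\boldsymbol{0}$ with $\boldsymbol{f}\neq\boldsymbol{0}$; the kernel of $\bT$ is the whole gradient component of the Helmholtz decomposition of $\BL^2$. Your deduction ``testing via the inf-sup condition gives first $\widehat{p}=0$ and then $\boldsymbol{f}=\boldsymbol{0}$'' breaks down precisely here: from $\widehat{\bu}=\boldsymbol{0}$ the first equation only yields $b(\bv,\widehat{p})=c(\boldsymbol{f},\bv)$, i.e.\ $\nabla\widehat{p}=\boldsymbol{f}$ distributionally, which forces neither quantity to vanish. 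Injectivity does hold after restricting $\bT$ to the Leray subspace of divergence-free fields, which is where the nontrivial spectrum lives, and that is the restriction you would need to work with. Second, and independently, the implication ``injective compact operator on an infinite-dimensional space $\Rightarrow$ infinitely many nonzero eigenvalues'' is false: the Volterra operator on $\L^2(0,1)$ is compact, injective, has infinite-dimensional (dense) range, and has spectrum equal to $\{0\}$ with no eigenvalues at all. For a non-selfadjoint operator such as $\bT$, infinitude of the point spectrum does not follow from Riesz--Schauder theory alone; one would have to argue, e.g., via viewing $\bT$ as a perturbation of the selfadjoint Stokes solution operator and invoking a Keldysh-type completeness result. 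Since the paper itself does not attempt this, the safe course is to present the lemma exactly as the consequence of compactness plus Riesz--Schauder (your first two paragraphs) and to drop the injectivity/infinitude argument, which as written is incorrect on both counts.
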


We conclude this section by redefining the spectral problem \eqref{def:oseen_system_weak} in order to  simplify the notations for the forthcoming analysis. With this in mind, let us introduce the sesquilinear form $A:\mathcal{X}\times\mathcal{X}\rightarrow\mathbb{C}$ defined by 
\begin{equation*}
A((\bu,p);(\bv,q)):= a(\bu,\bv)+ b(\bv,p)- b(\bu,q),\quad\forall (\bv,q)\in\mathcal{X}, 
\end{equation*}
which allows us to rewrite problem \eqref{def:oseen_system_weak} as follows: Find $\lambda\in\mathbb{C}$ and $(\boldsymbol{0},0)\neq(\bu,p)\in\mathcal{X}$ such that
\begin{equation}
\label{eq:eigenA}
A((\bu,p),(\bv,q))=\lambda c(\bu,\bv)\quad\forall (\bv,q)\in\mathcal{X}.
\end{equation}

Since the problem is non-selfadjoint, it is necessary to introduce the adjoint eigenvalue problem, which  reads as follows (see \cite[eq. (2.17)]{lepe2023finite}): Find $\lambda^*\in\mathbb{C}$ and a pair $(\boldsymbol{0},0)\neq(\bu^*,p^*)\in\mathcal{X}$ such that  
\begin{equation}\label{def:oseen_system_weak_dual_eigen}
	\left\{
	\begin{array}{rcll}
a(\bv,\bu^*)-b(\bv,p^*)&=&\overline{\lambda}c(\bv,\bu^*)&\forall \bv\in \BH_0^{1}(\O,\mathbb{C}),\\
-b( \bu^*,q)&=&0&\forall q\in \L_0^2(\O,\mathbb{C}).
\end{array}
	\right.
\end{equation}
Now we introduce the adjoint of \eqref{eq:operador_solucion_u} defined  by 
\begin{equation*}\label{eq:operador_adjunto_solucion_u}
	\bT^*:\BL^{2}(\O,\mathbb{C})\rightarrow \BL^{2}(\O,\mathbb{C}),\qquad \boldsymbol{f}\mapsto \bT^*\boldsymbol{f}:=\widehat{\bu}^*, 
\end{equation*} 
where $\widehat{\bu}^*\in\BH_0^{1}(\O,\mathbb{C})$ is the adjoint velocity of $\widehat{\bu}$ and solves the following adjoint source  problem: Find  
$(\widehat{\bu}^*, \widehat{p}^*)\in\mathcal{X}$ such that 
\begin{equation}\label{def:oseen_system_weak_dual_source}
	\left\{
	\begin{array}{rcll}
	a(\bv,\widehat{\bu}^*)-b(\bv,\widehat{p}^*)&=&c(\bv,\boldsymbol{f})&\forall \bv\in \BH_0^{1}(\O,\mathbb{C}),\\
-b( \widehat{\bu}^*,q)&=&0&\forall q\in \L_0^2(\O,\mathbb{C}).
\end{array}
	\right.
\end{equation}
Similar to Theorem \ref{th:regularidadfuente}, let us assume that the dual source and eigenvalue problems are such that the following estimate holds.

\begin{theorem}\label{th:regularidadfuente_dual}
There exist $s^*>0$ such that for all $\boldsymbol{f} \in \BL^2(\O,\mathbb{C})$, the solution $(\widehat{\bu}^*,\widehat{p}^*)$ of problem \eqref{def:oseen_system_weak_dual_source}, satisfies $\widehat{\bu}^*\in  \BH^{1+s^*}(\Omega,\mathbb{C} )$ and $\widehat{p}^*\in \H^{s^*}(\Omega,\mathbb{C} )$, and
 \begin{equation*}
\|\widehat{\bu}^*\|_{1+s^*,\O}+\|\widehat{p}^*\|_{s^*,\O}\leq C\|\boldsymbol{f}\|_{0,\O},
\end{equation*}
where $C>0$ is defined in Theorem \ref{th:regularidadfuente}. Further, if $(\bu^*,p^*) $ is an eigenfunction satisfying \eqref{def:oseen_system_weak_dual_eigen}, then  there exists $r^*>0$, not necessarily equal to $s^*$, such that  $(\bu^*,p^*) \in \mathcal{X}\cap ((\BH^{1+r^*}(\Omega,\mathbb{C}) \times \H^{r^*}(\Omega,\mathbb{C})))$ and the following bound holds
\begin{equation*}
\|\widehat{\bu}^*\|_{1+r^*,\O}+\|\widehat{p}^*\|_{r^*,\O}\leq C\|\widehat{\bu}^*\|_{0,\O},
 .\end{equation*}
\end{theorem}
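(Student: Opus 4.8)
The plan is to reproduce, for the adjoint source problem \eqref{def:oseen_system_weak_dual_source}, the argument behind Theorem~\ref{th:regularidadfuente}, exploiting that \eqref{def:oseen_system_weak_dual_source} is itself an Oseen-type system. First I would observe that, because of the splitting $a=a_{\text{sym}}+a_{\text{skew}}$ and the skew-symmetry of $a_{\text{skew}}$, the form $(\bu^*,\bv)\mapsto a(\bv,\bu^*)$ is the Oseen form associated with the convective field $-\boldsymbol{\beta}$; since $\div\boldsymbol{\beta}=0$, this field is again solenoidal and belongs to $\BL^\infty(\O,\mathbb{C})\cap\BL^{2+\varepsilon_1}(\O,\mathbb{C})$, so \eqref{eq:skew} still holds with $-\boldsymbol{\beta}$ in place of $\boldsymbol{\beta}$. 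Consequently $(\bu^*,\bv)\mapsto a(\bv,\bu^*)$ is $\mathcal{K}$-coercive, the inf-sup condition \eqref{ec:inf-sup_cont} is insensitive to the sign of the pressure coupling in \eqref{def:oseen_system_weak_dual_source}, and the Babu\v{s}ka--Brezzi theory gives the well-posedness of $\bT^*$ together with the a priori bound $\|\nabla\widehat{\bu}^*\|_{0,\O}\le(C_{pf}/\nu)\|\boldsymbol{f}\|_{0,\O}$ and the analogue of the pressure estimate; equivalently, one may quote \cite[Lemma 5.8]{John2016} for the Oseen system with convective field $-\boldsymbol{\beta}$. (Well-posedness of $\bT^*$ is in fact automatic once $\bT$ is known to be bounded on $\BL^2(\O,\mathbb{C})$, as $\bT^*$ is then its $\BL^2$-adjoint; the content below is the extra regularity.)

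Next I would recast \eqref{def:oseen_system_weak_dual_source} as a Stokes problem with a lower-order forcing, moving the skew-symmetric part of $a$ to the right-hand side: $\widehat{\bu}^*$ solves a Stokes system with datum $\boldsymbol{f}$ plus the term produced by $a_{\text{skew}}(\cdot,\widehat{\bu}^*)$, which in strong form is $(\boldsymbol{\beta}\cdot\nabla)\widehat{\bu}^*$, together with the incompressibility constraint $\div\widehat{\bu}^*=0$. Since $\nabla\widehat{\bu}^*\in\BL^2(\O,\mathbb{C})$ and $\boldsymbol{\beta}\in\BL^{2+\varepsilon_1}(\O,\mathbb{C})$, H\"older's inequality gives $(\boldsymbol{\beta}\cdot\nabla)\widehat{\bu}^*\in\BL^{t}(\O,\mathbb{C})$ with $1/t=\tfrac{1}{2}+1/(2+\varepsilon_1)<1$; as $\O\subset\mathbb{R}^2$, $\BL^{t}(\O,\mathbb{C})$ embeds continuously into $\BH^{-1+s^*}(\O,\mathbb{C})$ for some $s^*>0$ depending on $\varepsilon_1$, with
\[
\|(\boldsymbol{\beta}\cdot\nabla)\widehat{\bu}^*\|_{-1+s^*,\O}\le C\,\|\boldsymbol{\beta}\|_{\infty,\O}\,\|\nabla\widehat{\bu}^*\|_{0,\O}.
\]
The classical elliptic regularity for the Stokes system on the polygonal domain $\O$ (see \cite{MR1600081}) then yields $\widehat{\bu}^*\in\BH^{1+s^*}(\O,\mathbb{C})$ and $\widehat{p}^*\in\H^{s^*}(\O,\mathbb{C})$ with
\[
\|\widehat{\bu}^*\|_{1+s^*,\O}+\|\widehat{p}^*\|_{s^*,\O}\le C\bigl(\|\boldsymbol{f}\|_{0,\O}+\|\boldsymbol{\beta}\|_{\infty,\O}\|\nabla\widehat{\bu}^*\|_{0,\O}\bigr),
\]
and inserting the a priori bound for $\|\nabla\widehat{\bu}^*\|_{0,\O}$ collapses the right-hand side to $C\|\boldsymbol{f}\|_{0,\O}$ with precisely the constant $C$ of Theorem~\ref{th:regularidadfuente}, after absorbing the Stokes-regularity constant.

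For the eigenfunction statement, if $(\bu^*,p^*)$ solves \eqref{def:oseen_system_weak_dual_eigen} with eigenvalue $\overline{\lambda}$, then $(\bu^*,p^*)$ solves \eqref{def:oseen_system_weak_dual_source} with datum $\boldsymbol{f}=\overline{\lambda}\,\bu^*\in\BL^2(\O,\mathbb{C})$, and the previous step gives $\bu^*\in\BH^{1+r^*}(\O,\mathbb{C})$, $p^*\in\H^{r^*}(\O,\mathbb{C})$ for some $r^*>0$ with $\|\bu^*\|_{1+r^*,\O}+\|p^*\|_{r^*,\O}\le C\,\|\bu^*\|_{0,\O}$ (absorbing $|\lambda|$ into $C$); here $r^*$ need not equal $s^*$, since the corner singularities of an eigenfunction may differ from those of a generic source solution.

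I expect the main obstacle to be the control of $(\boldsymbol{\beta}\cdot\nabla)\widehat{\bu}^*$ in a negative Sobolev space of \emph{positive} order $-1+s^*$ with $s^*>0$: this is exactly where the hypothesis $\boldsymbol{\beta}\in\BL^{2+\varepsilon_1}(\O,\mathbb{C})$ (strictly stronger than $\BL^2$) and the two-dimensional embedding are needed, and where the admissible $s^*$ must be matched against the sharp Stokes regularity on a possibly non-convex polygon, in which case $s^*$ is further limited by the largest re-entrant angle. Pinning down a clean uniform constant and a concrete value of $s^*$ is delicate, which is why the statement is most naturally read --- like its primal counterpart --- as a standing regularity assumption.
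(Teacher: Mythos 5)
Your proposal is sound, but it is worth noting that the paper does not actually prove this theorem: it is introduced with the words ``let us assume that the dual source and eigenvalue problems are such that the following estimate holds,'' i.e.\ it is a standing regularity hypothesis mirroring Theorem~\ref{th:regularidadfuente}, which is itself only justified by an appeal to classical Stokes regularity on polygons (\cite{MR1600081}). You correctly diagnose this in your final paragraph. What you supply beyond the paper is the standard bootstrap that would substantiate the assumption: observing that the adjoint problem is again an Oseen system with the solenoidal field $-\boldsymbol{\beta}$ (so skew-symmetry, $\mathcal{K}$-coercivity and the inf-sup condition \eqref{ec:inf-sup_cont} carry over and $\bT^*$ is well defined), moving $(\boldsymbol{\beta}\cdot\nabla)\widehat{\bu}^*$ to the right-hand side, and invoking shift theorems for the Stokes operator. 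This is exactly the route the authors gesture at for the primal problem, so your argument is the natural completion rather than a divergence. Two small remarks: since the paper assumes $\boldsymbol{\beta}\in\BL^\infty(\O,\mathbb{C})$, the convective term already lies in $\BL^2(\O,\mathbb{C})$ with $\|(\boldsymbol{\beta}\cdot\nabla)\widehat{\bu}^*\|_{0,\O}\le\|\boldsymbol{\beta}\|_{\infty,\O}\|\nabla\widehat{\bu}^*\|_{0,\O}$, so the H\"older/negative-order embedding detour via $\BL^{2+\varepsilon_1}$ is not needed (that hypothesis is used in the paper only to justify the skew-symmetry identity \eqref{eq:skew}); and your final estimate with ``precisely the constant $C$ of Theorem~\ref{th:regularidadfuente}'' overstates what the argument delivers, since the Stokes shift constant is absorbed --- but the paper is equally cavalier about this constant, so nothing of substance is lost.
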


Finally the spectral characterization of $\bT^*$ is given as follows.
\begin{lemma}(Spectral Characterization of $\bT^*$).
The spectrum of $\bT^*$ is such that $\sp(\bT^*)=\{0\}\cup\{\kappa_{k}^*\}_{k\in{N}}$ where $\{\kappa_{k}^*\}_{k\in\mathbf{N}}$ is a sequence of complex eigenvalues that converge to zero, according to their respective multiplicities. 
\end{lemma}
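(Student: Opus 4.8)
**Proof proposal for the spectral characterization of $\bT^*$.**

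The plan is to mirror exactly the argument already established for $\bT$, replacing every ingredient by its adjoint counterpart. First I would observe that $\bT^*$ is well defined: the dual source problem \eqref{def:oseen_system_weak_dual_source} is the standard Babu\v{s}ka--Brezzi saddle-point system obtained from \eqref{def:oseen_system_weak_source} by swapping the order of the arguments in $a(\cdot,\cdot)$ and changing the sign in front of the divergence constraint. Since $a_{\text{sym}}(\cdot,\cdot)$ is symmetric and $a_{\text{skew}}(\cdot,\cdot)$ is skew, the form $(\bw,\bv)\mapsto a(\bv,\bw)$ is still $\mathcal{K}$-coercive (indeed $a(\bv,\bv)=a(\bv,\bv)$ up to conjugation, and the skew part vanishes on the diagonal by \eqref{eq:skew}), and the same inf-sup condition \eqref{ec:inf-sup_cont} controls the pressure. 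Hence $(\widehat{\bu}^*,\widehat{p}^*)$ exists and is unique, so $\bT^*:\BL^2(\O,\mathbb{C})\to\BL^2(\O,\mathbb{C})$ is a well-defined bounded linear operator; moreover, testing one problem against the solution of the other shows it is genuinely the $\BL^2$-adjoint of $\bT$.

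Next I would invoke Theorem~\ref{th:regularidadfuente_dual}: for every $\boldsymbol{f}\in\BL^2(\O,\mathbb{C})$ the dual velocity $\widehat{\bu}^*=\bT^*\boldsymbol{f}$ lies in $\BH^{1+s^*}(\O,\mathbb{C})$ with $\|\widehat{\bu}^*\|_{1+s^*,\O}\le C\|\boldsymbol{f}\|_{0,\O}$. Since $s^*>0$, the embedding $\BH^{1+s^*}(\O,\mathbb{C})\hookrightarrow\BL^2(\O,\mathbb{C})$ is compact (Rellich--Kondrachov), so $\bT^*$ factors through a bounded map into $\BH^{1+s^*}$ followed by a compact inclusion, and is therefore a compact operator on $\BL^2(\O,\mathbb{C})$. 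Equivalently, one may note that the adjoint of a compact operator is compact, so compactness of $\bT^*$ follows for free once $\bT$ is known to be compact.

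Finally I would apply the classical spectral theory for compact operators on a complex Hilbert space. The spectrum $\sp(\bT^*)$ is a countable set whose only possible accumulation point is $0$; since $\bT^*$ acts on the infinite-dimensional space $\BL^2(\O,\mathbb{C})$, $0\in\sp(\bT^*)$. Every nonzero element of $\sp(\bT^*)$ is an eigenvalue of finite algebraic multiplicity, and the nonzero eigenvalues can be enumerated as a (possibly finite) sequence $\{\kappa_k^*\}_{k\in\mathbf{N}}$, repeated according to multiplicity, with $\kappa_k^*\to 0$. This gives precisely $\sp(\bT^*)=\{0\}\cup\{\kappa_k^*\}_{k\in\mathbf{N}}$. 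I do not expect any real obstacle here: the only point requiring a line of justification is the $\mathcal{K}$-coercivity of the flipped form $a(\bv,\cdot)$ together with the inf-sup condition, which guarantee well-posedness of \eqref{def:oseen_system_weak_dual_source}; everything else is a direct transcription of the argument for $\bT$, and compactness may alternatively be deduced from that of $\bT$ by duality.
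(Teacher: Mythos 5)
Your proposal is correct and follows exactly the route the paper intends: the paper proves the characterization for $\bT$ via the compact embedding $\BH^{1+s}(\O,\mathbb{C})\hookrightarrow\BL^{2}(\O,\mathbb{C})$ plus the classical spectral theory of compact operators, and states the dual lemma as the immediate analogue using Theorem~\ref{th:regularidadfuente_dual} (equivalently, compactness of $\bT^*$ as the adjoint of the compact $\bT$). No discrepancies to report.
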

It is easy to prove that if $\kappa$ is an eigenvalue of $\bT$ with multiplicity $m$, $\overline{\kappa^*}$ is an eigenvalue of $\bT^*$ with the same multiplicity $m$.

Let us define the sesquilinear form $\widetilde{A}:\mathcal{X}\times\mathcal{X}\rightarrow\mathbb{C}$ by
\begin{equation*}
\widetilde{A}((\bv,q),(\bu^*,p^*)):=a(\bv,\bu^*)-b(\bv,p^*)+b(\bu^*,q),
\end{equation*}
which allows us to rewrite the dual eigenvalue problem  \eqref{def:oseen_system_weak_dual_eigen} as follows:  Find $\lambda^*\in\mathbb{C}$ and the pair $(\boldsymbol{0},0)\neq(\bu^*,p^*)\in\mathcal{X}$ such that  
\begin{equation*}
\label{eq:eigen_A_dual}
\widetilde{A}((\bv,q),(\bu^*,p^*))=\lambda^*c(\bv,\bu^*)\quad\forall (\bv,q)\in\mathcal{X}.
\end{equation*}

\section{The virtual  element method}
\label{sec:vem}
In order to discretize the Oseen eigenvalue problem, we first go over nonconforming virtual element space in this section. The original purpose of this space's development was to approximate the Stokes equation numerically. In our research, we utilise the improved version on Ref. \cite{MR4032861}. 
\subsection{Mesh notation and mesh regularity}
We consider the family of meshes $\{ \CT_h\}_{h>0}$ such that each mesh $\CT_h$ is a partition of the domain $\Omega$ into a finite collection of non-overlapping, polygonal elements $K$ with mesh diameter $h_K$, and boundary $\partial K$. As usual, we define $h:= \max_{K \in\CT_h } h_K$. Furthermore, $\CE:=\CE_{\text{int}} \cup \CE_{\text{bdy}}$ denotes the set of mesh edges of $\CT_h$ where $\CE_{\text{int}}$ and $\CE_{\text{bdy}}$ denotes respectively the subsets of the interior and boundary mesh edges.

Consider the polygonal element $K \in \CT_h$. We denote the outward pointing normal and the tangent unit vector to the polygonal boundary $\partial K$ by $\bn_K$ and $\bt_K$, respectively. For every edge $e \subset \partial K$, we denote by  $\bn_e$,and $\bt_e$ the normal and tangent unit vectors to $e$, respectively. Conventionally, we assume that $\bn_e$ points out of $\Omega$ if $e$ is a boundary edge, and $\bn_e$ and $\bt_e$ form an anti-clockwise oriented pair along every internal edge $e$. Accordingly, it holds that $\bn_e:= (t_2,-t_1)$ whenever $\bt_e:=(t_1,t_2)$. 

We define the space of piecewise polynomials of degree $k \geq 0$ by
\begin{equation*}
\CP_k(\CT_h):=\{q \in \L^2(\Omega): q|_K \in \CP_k(K) \quad \forall K \in  \CT_h \}.
\end{equation*}
Similarly, for all integers $l > 0$, we define the broken Sobolev space of degree $l$ on $\CT_h$ of  vector-valued fields, whose components are in $\BH^l(K)$ for all mesh elements $K$, as
\begin{equation*}
\BH^l(\CT_h):=\{\bphi \in  \BL^2(\Omega) : \bphi|_K \in \BH^l(K) \quad \forall K \in  \CT_h \}.
\end{equation*}
We endow this functional space with the broken semi-norm
\begin{equation*}
|\bphi_h|_{\FL{\ell},h}:=\Big ( \sum_{K \in \CT_h} |\bphi|_{\FL{\ell},K}^2 \Big)^{1/2}.
\end{equation*} 
Consider the internal edge $e \subset \partial K^{+} \cap \partial K^{-}$, $\text{where}\, K^{+}, K^{-} \in \CT_h$, and $\bn_e$ points from $K^{+}$ to $K^{-}$. We define the jump of a function $\bv$ through $e$ by $\jump{\bv}|_e:= \bv|_{K^{+}}-\bv|_K^{-}$ and, 
for boundary edges, we define $\jump{\bv}|_e:= \bv|_e$. 
For the a priori error analysis, we need the following regularity assumptions on the mesh family $\{ \CT_h \}_{h>0}$.
\begin{assumption}(Mesh Regularity) There exists a positive constant $\sigma >0$ such that for all $K \in \CT_h$ it holds that
\begin{itemize}
\item $(M1)$ the ratio between every edge length and the diameter $h_K$ is bigger than $\sigma$;
\item $(M2)$ $K$ is star-shaped with respect to a ball of radius $\rho_K$ satisfying $\rho_K > \sigma h_K$. 
\end{itemize}
\end{assumption}
These mesh assumptions impose some constraints that are admissible for the formulation of the method discussed in the next subsection. 
In view of the following analysis, it is helpful to define the continuous bilinear forms $a(\cdot,\cdot)$, $b(\cdot,\cdot)$ and $c(\cdot,\cdot)$ on the discrete space $\BH^1(\CT_h)$ as a sum of local contributions.
\begin{align*}
a(\bw,\bv)&:=\sum_{\E\in\CT_{h}}a_{\text{sym}}^{\E}(\bw,\bv)+a_{\text{skew}}^{\E}(\bw,\bv)\qquad \forall \bw, \bv\in \BH^1(\CT_h),\\
b(\bv,q)&:=\sum_{\E\in\CT_{h}}b^{\E}(\bv,q) \qquad \forall \bv\in  \BH^1(\CT_h)\; \text{and }q\in \L_0^2(\O,\mathbb{C}),\\
c(\bw,\bv)&:=\sum_{\E\in\CT_{h}}c^{\E}(\bw,\bv) \qquad \forall \bw,\bv\in \BL^{2}_0(\O,\mathbb{C}),\\
A((\bu,p),(\bv,q))&:=\sum_{K \in \CT_h} A^K((\bu,p),(\bv,q)) \qquad \forall (\bu,p), (\bv,q) \in \mathcal{X},
\end{align*}
where the local forms $a_{\text{sym}}^{\E}(\cdot,\cdot)$, $a_{\text{skew}}^{\E}(\cdot,\cdot)$, $b^K(\cdot,\cdot)$,  $c^K(\cdot,\cdot)$, and $A^K(\cdot,\cdot)$ corresponds to the restrictions of 
$a_{\text{sym}}(\cdot,\cdot)$, $a_{\text{skew}}(\cdot,\cdot)$, $b(\cdot,\cdot)$, $c(\cdot,\cdot)$,  and $A(\cdot,\cdot)$, respectively, from $\O$ to each element $K$ of $\CT_h$.

In the same way, we split  elementwise the norm $\L^2(\O,\mathbb{C})$ by 
\begin{equation*}
\label{eq:normas}
 \|q\|_{0,\O}:=\left(\sum_{K\in\mathcal{T}_h}\|q\|_{0,K}^2\right)^{1/2}\quad\forall q\in \L^2(\O,\mathbb{C}).
\end{equation*}

\subsection{Local and global discrete space}
In what follows we summarize the key ingredients for the discrete analysis, given by \cite{MR4032861}. For $K \in \CT_h$, we define for $k\geq 1$ the following auxiliary finite dimensional space
\begin{equation}
\label{aux_space}
\widetilde{\boldsymbol{\CS}}(K):=\{\bv \in \BH^1(K): \div \bv \in \CP_{k-1}(K), \rot \bv \in \CP_{k-1}(K), \bv \cdot \bn_e \in \CP_{k}(e) \forall e \subset \partial K\}.
\end{equation}
We decompose the space $\widetilde{\boldsymbol{\CS}}(K)$ in \eqref{aux_space} into the direct sum of two subspaces as follows
\begin{equation*}
\widetilde{\boldsymbol{\CS}}(K)= \widetilde{\boldsymbol{\CS}}_1(K) \oplus \widetilde{\boldsymbol{\CS}}_0(K),
\end{equation*}
where $\widetilde{\boldsymbol{\CS}}_1(K):= \{ \bv \in \widetilde{\boldsymbol{\CS}}(K): \div \bv=0, \bv \cdot \bn_{K}|_{\partial K}=0\}$
and 
\begin{equation}
\label{Loc:space:0}
\widetilde{\boldsymbol{\CS}}_0(K):= \{ \bv \in \widetilde{\boldsymbol{\CS}}(K): \rot \bv=0\}.
\end{equation}
Additionally, we introduce the   space 
\begin{equation}
\label{Loc:space:H}
\widetilde{\CH}:= \{ \phi \in \H^2(K), \Delta^2 \phi \in \CP_{k-1}(K), \phi|_{e}=0, \Delta \phi|_{e} \in \CP_{k-1}(e) \forall e \subset \partial K\}.
\end{equation}
The local space is constructed as sum of \eqref{Loc:space:0}, and curl of \eqref{Loc:space:H} as follows
\begin{equation*}
\widetilde{\BU}= \widetilde{\boldsymbol{\CS}}_1(K) \oplus \curl\widetilde{\CH}.
\end{equation*}
We define the following operators:
\begin{itemize}
\item (H1) the edge polynomial moments:
\begin{equation*}
\frac{1}{|e|} \int_e \bv \cdot \bn_e q_k \quad  \forall q_k \in  \CP_k(e), \forall e \subset \partial K;
\end{equation*}

\item  (H2) the edge polynomial moments:
\begin{equation*}
\frac{1}{|e|} \int_e \bv \cdot \bt_e q_{k-1} \quad  \forall q_{k-1} \in  \CP_{k-1}(e), \forall e \subset \partial K;
\end{equation*}

\item (H3) the elemental polynomial moments:

\begin{equation*}
\frac{1}{|K|} \int_K \bv \cdot \bq_{k-2} \quad  \forall \bq_{k-2} \in  \nabla \CP_{k-1}(K);
\end{equation*}

\item  (H4) the elemental polynomial moments:

\begin{equation*}
\frac{1}{|K|} \int_K \bv \cdot \bq_{k}^{\perp} \quad  \forall \bq_{k}^{\perp} \in  (\nabla \CP_{k+1}(K))^{\perp}.
\end{equation*}
\end{itemize}
Here, $(\nabla \CP_{k+1}(K))^{\perp}$ is the $\BL^2$-orthogonal complement of $\nabla \CP_{k+1}(K)$ in $\boldsymbol{\CP}_k(K)$, where $\boldsymbol{\CP}_k(K)$ is vector-valued polynomial space on $K$ of order $k$.  Following \cite{MR4032861}, we deduce that the set of operators above  provides a set of the degrees of freedom of the discrete space $\widetilde{\BU}$. 
Based on the computational aspect, we introduce the elliptic projection operator $\BPI:\widetilde{\BU} \rightarrow \BPL_k(K)$:
\begin{equation}
\label{def:projection:ell}
\begin{split}
 a^K_{\text{sym}}(\BPI \bu, \bq)&=a^K_{\text{sym}}(\bu,\bq)\quad \forall \bq \in \boldsymbol{\CP}_k(K), \\
 \int_{\partial K} \BPI \bu-\bu & =0.
\end{split}
\end{equation}
From the definition of the projection operator $\BPI$, we deduce that the right-hand side of \eqref{def:projection:ell} is  computable from (H1)-(H4). By employing the projection operator $\BPI$, we define a local computational space which is subspace of $\widetilde{\BU}$ as follows:
\begin{equation*}
\begin{split}
\BU(K)&:=  \{\bv \in \widetilde{\BU}: \int_{K} (\bv-\BPI \bv) \cdot \bq_k=0 \quad \forall \bq_k \in (\nabla \CP_{k+1}(K))^{\perp} / (\nabla \CP_{k-1}(K))^{\perp} \quad  \\
& \text{and} \quad 
\int_{e} (\bv-\BPI \bv) \cdot \bn_e q_k=0 \quad \forall q_k \in \CP_k(e)/\CP_{k-1}(e), \quad \forall e \subset \partial K \},
\end{split}
\end{equation*}
where the symbol $\mathcal{V}/ \mathcal{V}_1$ denotes the subspace of space $\mathcal{V}$ consisting of polynomials that are $\BL^2(K)$-orthogonal to space $\mathcal{V}_1$.
Since the projector $\BPI$ is invariant on polynomial function space  $\BPL_k(K)$, we deduce that $\BPL_k(K) \subset \BU(K)$. Furthermore, (H1) and (H3) are a set of degrees of freedom for $\BU(K)$. For $K \in \CT_h$, the local space $\BU(K)$ is unisolvent with respect to a certain set of bounded linear operators, which are defined as follows:
\begin{itemize}
\item the edge polynomial moments:
\begin{equation*}
\frac{1}{|e|} \int_e \bv \cdot \bq_{k-1} \quad  \forall \bq_{k-1} \in  \BPL_{k-1}(e), \forall e \subset \partial K;
\end{equation*}

\item the elemental polynomial moments

\begin{equation*}
\frac{1}{|K|} \int_K \bv \cdot \bq_{k-2} \quad  \forall \bq_{k-2} \in  \BPL_{k-2}(K);
\end{equation*}

\end{itemize}
According to the definition of the virtual space $\BU(K)$, the term $\BPI \bv$ is computable for all $\bv \in \BU(K) $. 
Now we define the global nonconforming virtual space by
\begin{multline}
\label{NCVEM:space}
\BU_h:=\left\{\bv \in \BL^2(\O,\mathbb{C}) : \bv|_K \in \BU(K) \forall K \in \CT_h,\right.\\
\left. \int_e \jump{\bv}_e \cdot \bq_{k-1}=0 \,\ \forall  \bq_{k-1} \in \BPL_{k-1}(e)  \forall e \in  \CE \right\}.
\end{multline}
Clearly the space $\BU_h$ is not continuous over $\Omega$ since $\BU_h\not\subset \BH^1(\Omega)$. In the next lemma, we summarize two technical results that will be helpful in the derivation of the a priori estimates of the next sections. Further, we highlight that the $\BL^2$ projection operator $\BPIZ$ is computable on $\BU(K)$ \cite{zhang2023divergence}. To define the interpolation operator $\mathcal{I}$ on the space $\BU_h$, for each element $K \in \CT_h$, we denote by $\Sigma_i$, the operator associated with the i-th local degree of freedom, $i = 1,2,\ldots,N^{\text{dof}}$. From the above construction, it is easily seen that for every smooth enough function $\bv$, there exists an unique element $\mathcal{I}_K \bv \in \BU_h(K)$ such that $\Sigma_i(\bv-\mathcal{I}_K \bv)=0$ ,  $\forall$ $i = 1,2,\ldots,N^{\text{dof}}$.
\begin{remark}\label{equivalencia}
From the Poincar\'e-Friedrichs inequalit (see \cite{BrennerSIAM}) it is straightforward to check that $|\cdot|_{1,h}$ is equivalent to $\|\cdot\|_{1,h}$ on $\BU_h$, i.e there exists constant $C_{1,p} >0$ such that
\begin{equation}\label{eq:equivalencia}
C_{1,p}\|\bv_h\|_{1,h}\leq |\bv_h|_{1,h}\leq \|\bv_h\|_{1,h},\qquad \forall \bv_h\in \BU_h.
\end{equation}
\end{remark}Then, we define the global interpolation $\mathcal{I}$ for $\BU_h$ by setting $\mathcal{I}|_K = \mathcal{I}_K$ $\forall K \in \CT_h $. Two technical conclusions that will be useful in deriving the a priori estimates of the following sections are summarized in the next lemma.  
\begin{lemma}
\label{Int_poly_err}
The following statements hold:
\begin{itemize}
\item For each polygon $K \in \CT_h$ and any t such that $1 \leq t \leq k+1$, it holds that  
\begin{equation}
\|\bv - \mathcal{I}_K \bv \|_{m,K} \leq C h^{t-m} |\bv|_{t,D(K)} \qquad m=0,1. 
\end{equation}
\item For each polygon $K \in \CT_h$ and any  $t$ such that $1 \leq t \leq k+1$, there exists a polynomial $\bv_{\pi} \in \BPL_{k}(K) $, such that 
\begin{equation}
\|\bv - \bv_{\pi} \|_{m,K} \leq C h^{t-m} |\bv|_{t,K} \qquad m=0,1, 
\end{equation}
\end{itemize}
where $D(K)$ denotes the union of polygons in $\CT_h$ intersecting $K$.
\end{lemma}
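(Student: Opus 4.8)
The plan is to prove the two items separately: the second (polynomial approximation) is a direct application of classical Bramble--Hilbert/Dupont--Scott theory on star-shaped domains, while the first (interpolation error) is obtained by combining the polynomial invariance of $\mathcal{I}_K$ with a stability bound for the operator, both resting on the mesh regularity Assumption (M1)--(M2).

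\emph{Polynomial approximation.} Since by (M2) each $K$ is star-shaped with respect to a ball of radius $\rho_K \geq \sigma h_K$, I would define $\bv_\pi$ componentwise as the averaged Taylor polynomial of degree $k$ over that ball; the Dupont--Scott estimate (see \cite{BrennerSIAM}) then yields $\|\bv - \bv_\pi\|_{m,K} \leq C h_K^{t-m}|\bv|_{t,K}$ for $m=0,1$ and $1 \leq t \leq k+1$, with $C$ depending only on $k$ and the chunkiness parameter, hence only on $\sigma$. This is exactly the claimed bound, and since $|\bv|_{t,K}\le|\bv|_{t,D(K)}$ it is also consistent with the patch appearing in the first item.

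\emph{Interpolation error.} I would write $\bv - \mathcal{I}_K\bv = (\bv - \bv_\pi) - \mathcal{I}_K(\bv - \bv_\pi)$, which is legitimate because $\BPL_k(K) \subset \BU(K)$ and $\mathcal{I}_K$ fixes polynomials of degree $\leq k$ (a polynomial coincides with its own edge moments $\tfrac{1}{|e|}\int_e\cdot\,\bq_{k-1}$ and interior moments $\tfrac{1}{|K|}\int_K\cdot\,\bq_{k-2}$, which form the unisolvent set of degrees of freedom of $\BU(K)$). Hence
\[
\|\bv - \mathcal{I}_K\bv\|_{m,K} \leq \|\bv - \bv_\pi\|_{m,K} + \|\mathcal{I}_K(\bv-\bv_\pi)\|_{m,K},
\]
where the first term is controlled by the previous item. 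For the second term I would establish the stability estimate
\[
\|\mathcal{I}_K \bw\|_{0,K} + h_K\,|\mathcal{I}_K\bw|_{1,K} \leq C\bigl(\|\bw\|_{0,K} + h_K\,|\bw|_{1,K}\bigr), \qquad \bw \in \BH^1(K),
\]
and apply it to $\bw = \bv - \bv_\pi$, closing the argument with one more use of Dupont--Scott. The stability estimate follows by the standard VEM computation: expand $\mathcal{I}_K\bw$ in the basis of $\BU(K)$ dual to the degrees of freedom; bound $|\mathcal{I}_K\bw|_{1,K}$ by a suitably scaled $\ell^2$-norm of the degree-of-freedom values (and $\|\mathcal{I}_K\bw\|_{0,K}$ via the Poincar\'e--Friedrichs inequality on $\BU_h$, cf. Remark \ref{equivalencia}); and bound each edge moment $\tfrac{1}{|e|}\int_e\bw\cdot\bq_{k-1}$ by Cauchy--Schwarz together with the scaled trace inequality $\|\bw\|_{0,e}^2 \lesssim h_K^{-1}\|\bw\|_{0,K}^2 + h_K|\bw|_{1,K}^2$, using (M1) so that $|e|\simeq h_K$, while the interior moments are bounded directly by $\|\bw\|_{0,K}$. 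Summing these local contributions as in \cite{MR4032861,MR3529253} gives the asserted bound, and the intermediate values of $t$ follow from the same computation.

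The main obstacle is the interpolation-operator stability estimate: it is the single place where the geometric hypotheses (M1)--(M2) are indispensable, since it rests on scaled trace inequalities on polygonal elements and on a Poincar\'e--Friedrichs inequality on the non-polynomial space $\BU(K)$, both with constants uniform over the mesh family; by contrast the polynomial-approximation part is entirely classical.
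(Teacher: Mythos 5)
The paper states this lemma without proof, deferring to the standard nonconforming VEM references, and your argument is precisely the proof found there: Dupont--Scott on star-shaped elements for the polynomial part, and polynomial invariance of $\mathcal{I}_K$ plus its DOF-stability (the one genuinely delicate step, where (M1)--(M2) enter through the scaled trace inequality and the uniform norm equivalence between virtual functions and their scaled degrees of freedom) for the interpolation part. As a minor remark, your purely local argument in fact yields the sharper bound with $|\bv|_{t,K}$ in place of $|\bv|_{t,D(K)}$ in the first item, which is consistent with, and slightly stronger than, the stated estimate.
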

On the other hand, the discrete pressure space is given by
\begin{equation*}
\Q_h:=\{q_h \in \L^{2}(\O,\mathbb{C})\,:\, q_h|_K\in\mathcal{P}_{k-1}(K),\,\quad\forall K\in \CT_h\},
\end{equation*}
We also introduce the $\L^2$-orthogonal projection $\mathcal{R}_h:\L^2(\O)\to \Q_h$
and the following approximation result  holds for  $0\leq t\leq 1$ (see \cite{MR3507271} for instance)
\begin{equation}\label{eq:estimateL2}
\|q-\mathcal{R}_hq\|_{0,\O}\leq Ch^t\|q\|_{\FL{t},\O},\qquad\forall q\in\H^{t}(\O).
\end{equation}
Let us introduce the operator $\div_h(\cdot)$ which corresponds to the discretized global form of the divergence operator, i.e., $(\div_h \bv)|_K = \div(\bv|_K)$ for all $K\in\CT_h$ (and sufficiently regular $\bv$). From the above construction, we deduce that $\div_h \BU_h \subset\Q_h$, and the relation between the virtual interpolation operator and $\mathcal{R}_{h}$ is as follows $\div_h \mathcal{I}\bv=\mathcal{R}_h\div_h\bv$ for all  $\bv\in\BH^1(\O)$.
Now, let $S^K(\cdot,\cdot)$ be any symmetric positive definite bilinear form chosen to satisfy
\begin{equation}
\label{eq:S_bounded}
c_0 a_{\text{sym}}^K(\bv_h,\bv_h)\leq S^K(\bv_h,\bv_h)\leq c_1 a_{\text{sym}}^K(\bv_h,\bv_h),
\end{equation}
for some positive constants $c_0$ and $c_1$ depending only on the constant $\sigma$ from the mesh assumptions $M1$ and $M2$.
Then, for all $\bw_h,\bv_h\in\BU_h$, we introduce on each element $K$ the local (and computable) bilinear forms
\begin{align*}
a_{h,\text{sym}}^K(\bw_h,\bv_h)&:=a_{\text{sym}}^K(\BPI\bw_h,\BPI\bv_h)+S^K(\bw_h-\BPI\bw_h,\bv_h-\BPI\bv_h);\\
a_{h,\text{skew}}^K(\bw_h,\bv_h)&:=\dfrac{1}{2}\int_K\left(\left(\boldsymbol{\beta}\cdot\BPIvec \nabla \right)\bw_{h}\cdot\BPIZ\bv_h-\left(\boldsymbol{\beta}\cdot\BPIvec \nabla \right)\bv_{h}\cdot\BPIZ\bw_h\right);\\
c_h^K(\bw_h,\bv_h)&:=c^K(\BPIZ\bw_h,\BPIZ\bv_h).
\end{align*}
The construction of $a_{h,\text{sym}}^K(\cdot, \cdot)$ and $c_h^K(\cdot,\cdot)$ guarantees the usual consistency and stability properties of the VEM. With this considerations at hand,  the following result holds true which is direct from \cite{MR2997471}.
\begin{lemma}
The local bilinear forms $a_{h,\text{sym}}^K(\cdot, \cdot)$ and $c_h^K(\cdot,\cdot)$ on each element $K$ satisfy:
\begin{itemize}
\item Consistency: for all $h>0$ and for all $K\in\CT_h$ we have that
\begin{align*}
a_{h,\text{sym}}^K(\bv_h, \bq_k)&=a_{\text{sym}}^K(\bv_h, \bq_k)\qquad\forall \bq_k\in \BPL_{k}(K), \\
c_{h}^K(\bv_h, \bq_k)&=c^K(\bv_h, \bq_k)\qquad\forall \bq_k\in \BPL_{k}(K).
\end{align*}
\item Stability:  for all $K\in\CT_h$, there exist positive constants $c_*$, $c^*$ and $d^*$, independent of $h$, such that
\begin{align*}
c_*a_{\text{sym}}^K(\bv_h, \bv_h)\leq& a_{h,\text{sym}}^K(\bv_h, \bv_h)\leq c^*a_{\text{sym}}^K(\bv_h, \bv_h) \qquad\forall \bv_h\in \BU_h, \\
&c_{h}^K(\bv_h, \bv_h)\leq d^*c^K(\bv_h, \bv_h)\qquad\forall \bv_h\in \BU_h.
\end{align*}
\end{itemize}
\end{lemma}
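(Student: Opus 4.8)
The plan is to establish the two properties — consistency (polynomial exactness) and stability (norm-equivalence up to constants) — for $a_{h,\text{sym}}^K$ and $c_h^K$ separately, since each decouples into an argument about the projection part plus an argument about the stabilization part. The whole argument is the standard VEM "dofi-dofi" recipe of \cite{MR2997471}, transplanted to the nonconforming setting; no genuinely new idea is needed, only care that the projectors $\BPI$ and $\BPIZ$ used here are the ones computable on $\BU(K)$.

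\textbf{Consistency.} First I would take $\bq_k\in\BPL_k(K)$ and use the polynomial invariance of the projectors. For $a_{h,\text{sym}}^K$: since $\BPI\bq_k=\bq_k$ (immediate from the defining relations \eqref{def:projection:ell}, as $\bq_k$ reproduces itself), the stabilization argument $S^K(\bq_k-\BPI\bq_k,\cdot)=S^K(\0,\cdot)=0$ kills the second term, and the first term becomes $a_{\text{sym}}^K(\bq_k,\BPI\bv_h)$. Then I invoke the definition of $\BPI$ once more — $a_{\text{sym}}^K(\BPI\bv_h,\bq_k)=a_{\text{sym}}^K(\bv_h,\bq_k)$ for every polynomial test function — together with the symmetry of $a_{\text{sym}}^K$, to conclude $a_{h,\text{sym}}^K(\bv_h,\bq_k)=a_{\text{sym}}^K(\bv_h,\bq_k)$. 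For $c_h^K$: because $\BPIZ$ reproduces polynomials of degree $\le k$, we have $\BPIZ\bq_k=\bq_k$, so $c_h^K(\bv_h,\bq_k)=c^K(\BPIZ\bv_h,\bq_k)$, and the defining property of the $L^2$-projection $c^K(\BPIZ\bv_h,\bq_k)=c^K(\bv_h,\bq_k)$ finishes it. These are one-line verifications.

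\textbf{Stability.} For $a_{h,\text{sym}}^K$ write $\bv_h=\BPI\bv_h+(\bv_h-\BPI\bv_h)$. By $a_{\text{sym}}^K$-orthogonality of the elliptic projection (i.e. $a_{\text{sym}}^K(\BPI\bv_h,\bv_h-\BPI\bv_h)=0$, since $\BPI\bv_h$ is a polynomial and $\BPI$ is an $a_{\text{sym}}^K$-projection onto $\BPL_k(K)$), the Pythagoras identity gives $a_{\text{sym}}^K(\bv_h,\bv_h)=a_{\text{sym}}^K(\BPI\bv_h,\BPI\bv_h)+a_{\text{sym}}^K(\bv_h-\BPI\bv_h,\bv_h-\BPI\bv_h)$. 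Meanwhile $a_{h,\text{sym}}^K(\bv_h,\bv_h)=a_{\text{sym}}^K(\BPI\bv_h,\BPI\bv_h)+S^K(\bv_h-\BPI\bv_h,\bv_h-\BPI\bv_h)$, and bounding $S^K$ above and below by $a_{\text{sym}}^K(\bv_h-\BPI\bv_h,\bv_h-\BPI\bv_h)$ via \eqref{eq:S_bounded} yields the two-sided bound with $c_*=\min\{1,c_0\}$ and $c^*=\max\{1,c_1\}$. For $c_h^K$ only the upper bound is asserted: since $\BPIZ$ is the $L^2(K)$-orthogonal projection it is a contraction in the $L^2(K)$-norm, hence $c_h^K(\bv_h,\bv_h)=\|\BPIZ\bv_h\|_{0,K}^2\le\|\bv_h\|_{0,K}^2=c^K(\bv_h,\bv_h)$, giving $d^*=1$. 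Independence of $h$ of all constants is inherited from \eqref{eq:S_bounded}, whose constants depend only on the mesh-regularity parameter $\sigma$.

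\textbf{The main obstacle} is essentially bookkeeping rather than mathematics: one must be sure that the projector appearing in $a_{h,\text{sym}}^K$ genuinely satisfies the $a_{\text{sym}}^K$-orthogonality used in the Pythagoras step, which requires that the polynomial image $\BPI\bv_h$ be an \emph{admissible competitor} in \eqref{def:projection:ell} — here it is, because $\BPL_k(K)\subset\BU(K)$. A second point to watch is that the claimed consistency for $c_h^K$ holds up to degree $k$ only because $\BPIZ$ is taken onto $\BPL_k(K)$ (and is computable on $\BU(K)$ by \cite{zhang2023divergence}); if one had used the lower-order projection $\BPIvec$ the statement would fail, so the lemma must be read with the projectors exactly as defined just above it. Everything else is a direct transcription of \cite{MR2997471}.
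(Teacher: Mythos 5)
Your proof is correct and is precisely the standard consistency/stability argument of the "dofi-dofi" VEM framework that the paper itself does not write out, simply asserting the lemma is "direct from" the reference \cite{MR2997471}. The polynomial invariance of $\BPI$ and $\BPIZ$, the Pythagoras splitting via the $a_{\text{sym}}^K$-orthogonality of $\bv_h-\BPI\bv_h$ to $\BPL_k(K)$, the two-sided bound from \eqref{eq:S_bounded} applied to the kernel component, and the contraction property of the $\L^2$-projection giving $d^*=1$ are exactly the intended ingredients, so your write-up supplies the omitted details without deviating from the paper's route.
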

For the bilinear form $b_h(\cdot,\cdot)$, we do not introduce any approximation and simply set
$$b_h(\bv_h,q_h):=\sum_{K\in\CT_h}b^K(\bv_h,q_h)=-\sum_{K\in\CT_h}\int_Kq_h\div\bv_h,\qquad \forall  \bv_h\in\BU_h,q_h\in\Q_h.$$
Since $b_h(\bv_h,q_h)$ is computable in each element $K \in\CT_h$ with the aid of the degrees of freedom defined on $\boldsymbol{\mathcal{U}}(K)$.
Naturally for all $\bw_h,\bv_h\in\BU_h$ we can introduce the following bilinear form 
$$a_h(\bw_h,\bv_h):=\sum_{K\in\CT_h}a_h^K(\bw_h,\bv_h)=\sum_{K\in\CT_h}a_{h,\text{sym}}^K(\bw_h,\bv_h)+a_{h,\text{skew}}^K(\bw_h,\bv_h).$$
It is easy to check that $a_h(\cdot,\cdot)$ and $b_h(\cdot,\cdot)$ are continuous sesquilinear forms. Indeed, for $a_h(\cdot,\cdot)$ we have
\begin{equation}
\label{eq:bound_ah}
|a_h(\bu_h,\bv_h)|\leq |a_{h,\text{sym}}(\bu_h,\bv_h)|+ |a_{h,\text{skew}}(\bu_h,\bv_h)|,
\end{equation}
where we need to estimate each contribution on the right hand side of the inequality above. For the symmetric part we have
\begin{multline}
\label{eq:bound_sym}
|a_{h,\text{sym}}(\bu_h,\bv_h)|=\left| \sum_{K\in\CT_h}a_{\text{sym}}^K(\bu_h,\bv_h)+S^K(\bu_h-\boldsymbol{\Pi}_{K}^{\nabla}\bu_h,\bv_h-\boldsymbol{\Pi}_{K}^{\nabla}\bv_h)\right|\\
\leq \sum_{K\in\CT_h}\nu\|\nabla\boldsymbol{\Pi}^{\nabla}_K\bu_h\|_{0,K}\|\nabla\boldsymbol{\Pi}^{\nabla}_K\bv_h\|_{0,K}+c_1\nu\|\nabla(\boldsymbol{\Pi}^{\nabla}_K\bu_h-\bu_h)\|_{0,K}\|\nabla(\boldsymbol{\Pi}_{K}^{\nabla}\bv_h-\bv_h)\|_{0,K}\\
\leq \nu\max\{\widetilde{c}_1,1 \}\|\bu_h\|_{1,h}\|\bv_h\|_{1,h},
\end{multline}
where the constant $\widetilde{c}_1$ is the sum of all the constants $c_1$ involved in \eqref{eq:S_bounded} for each element $K\in\CT_h$. Now, for the skew-symmetric part we have 
\begin{multline}
\label{eq:bound_skew}
|a_{h,\text{skew}}(\bu_h,\bv_h)|\leq \frac{1}{2}\left|\sum_{K\in\CT_h}\int_K(\boldsymbol{\beta}\cdot\boldsymbol{\Pi}^0_{k-1,K})\bu_h\boldsymbol{\Pi}_{k,K}^0\bv_h \right|\\
+
\frac{1}{2}\left|\sum_{K\in\CT_h}\int_K(\boldsymbol{\beta}\cdot\boldsymbol{\Pi}^0_{k-1,K})\bv_h\boldsymbol{\Pi}_{k,K}^0\bu_h \right|\\
\leq \frac{1}{2}\sum_{K\in\CT_h}\|\boldsymbol{\beta}\|_{\infty,K}\|\boldsymbol{\Pi}_{k-1,K}^0\nabla\bu_h\|_{0,\e}\|\boldsymbol{\Pi}_{k,K}^0\bv_h\|_{0,K}\\
+
\frac{1}{2}\sum_{K\in\CT_h}\|\boldsymbol{\beta}\|_{\infty,K}\|\boldsymbol{\Pi}_{k-1,K}^0\nabla\bv_h\|_{0,K}\|\boldsymbol{\Pi}_{k,K}^0\bu_h\|_{0,K}\\
\leq \|\boldsymbol{\beta}\|_{\infty,\O} \|\bu_h\|_{1,h}\|\bv_h\|_{1,h},
\end{multline}
where   we have used the stability of the projector $\boldsymbol{\Pi}_{k-1,K}^0$ and $\boldsymbol{\Pi}_{k,K}^0$, respectively. Hence, replacing \eqref{eq:bound_sym} and \eqref{eq:bound_skew} in \eqref{eq:bound_ah} we have that
\begin{equation*}
|a_h(\bu_h,\bv_h)|\leq\max\{\nu\max\{\widetilde{c}_1,1\},\|\boldsymbol{\beta}\|_{\infty,\O}C_{\textrm{I}}C_{\textrm{II}}\}\|\bu_h\|_{1,h}\|\bv_h\|_{1,h},
\end{equation*}
which proves the boundedness of $a_h(\cdot,\cdot)$. On the other hand, for $b_h(\cdot,\cdot)$ we have
\begin{multline*}
|b_h(\bv_h,q_h)|=\left|\sum_{K\in\cT_h}\int_{K}q_h\div\bv_h \right|\leq \sum_{K\in\CT_h}\|q_h\|_{0,K}\|\div\bv_h\|_{0,K}\\
\leq \sum_{K\in\CT_h}\|q_h\|_{0,K}\|\nabla\bv_h\|_{0,K}\leq \|q_h\|_{0,\O}\|\bv_h\|_{1,h},
\end{multline*}
proving that $b_h(\cdot,\cdot)$ is also continuous.
\subsection{The discrete eigenvalue problem}
The nonconforming virtual element discretization of the variational formulation \eqref{def:oseen_system_weak} reads as follows.
Find $\lambda_h\in\mathbb{C}$ and $(\boldsymbol{0},0)\neq(\bu_h,p_h)\in \mathcal{X}_h$ such that 
\begin{equation}\label{def:oseen_system_weak_discrete}
	\left\{
	\begin{array}{rcll}
a_h(\bu_h,\bv_h) + b_h(\bv_h,p_h)&=&\lambda_h c_h(\bu_h,\bv_h)&\forall \bv_h\in \BU_h,\\
b_h(\bu_h,q_h)&=&0&\forall q_h\in \Q_h,
\end{array}
	\right.
\end{equation}
where $\mathcal{X}_h:= \BU_h \times \Q_h$.
Thanks to the stability of the bilinear form $a_{h,\text{sym}}^K(\cdot,\cdot)$ and the definition of the bilinear form $a_{h,\text{skew}}^K(\cdot,\cdot)$, it is easy to check that $a_h(\cdot,\cdot)$ is coercive, i.e. 
$$c_*\nu|\bv_h|_{1,h}^2\leq a_h(\bv_h,\bv_h)\qquad \forall\bv_h\in\BU_h.$$
On the other hand, given the discrete spaces $\BU_h$ and $\Q_h$ , satisfy that $\div_h \BU_h \subset\Q_h$, standard arguments (see \cite{MR851383}) guarantee that there exists a positive constant $\beta_0$, independent of $h$, such that
\begin{equation}
\label{ec:inf-sup_disc}
\displaystyle\sup_{\bv_h\in\BU_h}\frac{b_h(\bv_h,q_h)}{|\bv_h|_{1,h}}\geq\beta_0\|q_h\|_{0,\O}\quad\forall q_h\in\Q_h.
\end{equation}
The next step is to introduce the  discrete solution operator $\bT_h: \BL^2(\Omega) \rightarrow \BU_h \subset \BL^2(\Omega)$, defined by $\bT_h\boldsymbol{f}:=\widehat{\bu}_h$, where $\widehat{\bu}_h$ is the  solution of the corresponding discrete source problem:
\begin{equation}\label{def:oseen_system_sourcediscretproblem}
	\left\{
	\begin{array}{rcll}
a_h(\widehat{\bu}_h,\bv_h) + b_h(\bv_h,\widehat{p}_h)&=& c_h(\boldsymbol{f},\bv_h)&\forall \bv_h \in \BU_h,\\
b_h(\widehat{\bu}_h,q_h)&=&0&\forall q_h\in \Q_h.
\end{array}
	\right.
\end{equation}
Since the discrete inf-sup condition is satisfied, the operator $\bT_h$ is well defined. Moreover, we have the following stability result.
$$c_*\nu|\widehat{\bu}_h|_{1,h}\leq \dfrac{1}{C_{1,p}}\|\boldsymbol{f}\|_{0,\O},$$ whereas for the pressure we have
$$\|\widehat{p}_h\|_{0,\O}\leq\dfrac{1}{\beta_0}\left(\dfrac{1}{C_{1,p}}\|\boldsymbol{f}\|_{0,\O}+\nu^{1/2}|\widehat{\bu}_h|_{1,h}\left(\nu^{1/2}\max\{\widetilde{c}_1,1 \}+\dfrac{\|\boldsymbol{\beta}\|_{\infty,\O}}{\nu^{1/2}}\right)\right).$$
where $C_{1,p}$ and $\widetilde{c}_1$  are the constants appearing in\eqref{eq:equivalencia} and \eqref{eq:bound_sym} respectively.
%
%
As in the continuous case, we have the following relation between the discrete spectral problem and its source problem, i.e., $(\lambda_h,(\bu_h,p_h))$ is a solution of Problem \eqref{def:oseen_system_weak_discrete} if and only if $(\kappa_h,\bu_h)$ is an eigenpair of $\bT_h$, i.e., $\bT_h\bu_h=\kappa_h\bu_h$ with $\kappa_h=1/\lambda_h$ and $\lambda_h\neq 0$.
The discrete version of the spectral problem \eqref{eq:eigenA} is written as
\begin{problem}
\label{dis_original}
Find $(\lambda_h, \bu_h, p_h) \in \mathbb{R} \times \BU_h \times \Q_h$ such that $\|\bu_h\|_{0,\Omega}+\|p_h\|_{0,\Omega} > 0$, and 
\begin{equation*}
A_h((\bu_h, p_h), (\bv_h, q_h)) = \lambda_h c_h(\bu_h,\bv_h), \quad \forall (\bv_h, q_h) \in \BU_h\times \Q_h,
\end{equation*}
\end{problem}
where 
$$A_h((\bu_h, p_h), (\bv_h, q_h))=a_h(\bu_h,\bv_h)+b_h(\bv_h,p_h)-b_h(\bu_h,q_h).$$
In Problem~\ref{dis_original}, $A_h(\cdot,\cdot)$, and $c_h(\cdot,\cdot)$ are the virtual element discretization of $A(\cdot,\cdot)$, and $c(\cdot,\cdot)$ respectively, whereas $(\lambda_h, (\bu_h, p_h))$ is the virtual element approximation of the continuous solution  $(\lambda, (\bu, p))$. 
Likewise, we define the discrete formulation corresponding to the adjoint problem, i.e. Eqn.~\eqref{def:oseen_system_weak_dual_eigen}. The identical arguments as for the primal formulation imply the well-posedness of the discrete formulation. 
\begin{remark}
The discrete bilinear form $c_h(\cdot,\cdot)$ is defined neglecting the corresponding stabilizer. We emphasize that we define the solution operator on $\BL^2$ which does not guarantee the existence of the trace on the boundary, and consequently, the edge moments will not be well-defined. This does not guarantee the existence of the associated stabilizer. However, the proposed  definition $c_h(\cdot,\cdot)$ needs only $\BL^2$ regularity and hence suitable for our strategy. 
\end{remark}
As the case continues, it is now necessary to define the adjoint discrete problem, which consists in: Find $\lambda_h^*\in\mathbb{C}$ and $(\boldsymbol{0},0)\neq(\bu_h^*,p_h^*)\in \mathcal{X}_h$ such that 
\begin{equation}\label{def:oseen_system_weak_discretedual}
	\left\{
	\begin{array}{rcll}
a_h(\bv_h,\bu_h^*) - b_h(\bv_h,p_h^*)&=&\overline{\lambda_h^*} c_h(\bv_h,\bu_h^*)&\forall \bv_h\in \BU_h,\\
-b_h(\bu_h^*,q_h)&=&0&\forall q_h\in \Q_h,
\end{array}
	\right.
\end{equation}
Now we define the discrete version of the operator $\bT^*$ is then given by $\bT_h^*: \BL^2(\Omega) \rightarrow \BU_h \subset \BL^2(\Omega)$, defined by $\bT_h^*\boldsymbol{f}:=\widehat{\bu}_h^*$, where $\widehat{\bu}_h^*$ is the  solution of the corresponding discrete source problem:
\begin{equation}\label{def:oseen_system_sourcediscretproblemdual}
	\left\{
	\begin{array}{rcll}
a_h(\bv_h,\widehat{\bu}_h^*) - b_h(\bv_h,\widehat{p}_h^*)&=& c_h(\bv_h,\boldsymbol{f})&\forall \bv_h \in \BU_h,\\
-b_h(\widehat{\bu}_h,q_h)&=&0&\forall q_h\in \Q_h.
\end{array}
	\right.
\end{equation}
\section{A priori error estimates for the source problem}
\label{apriori:error}
We are now in a position to be able to show that $\bT_h$ converges to $\bT$ as $h$ becomes zero in the broken norm. This is contained in the following result
\begin{theorem}
Let $\boldsymbol{f}\in\BL^2(\O,\mathbb{C})$ be such that $\widehat{\bu}:=\bT\boldsymbol{f}$ and $\widehat{\bu}_h:=\bT_h\boldsymbol{f}$ with $\widehat{\bu} \in \BH^{1+s}(\O,\mathbb{C})$, $s\geq 1$. Then, there exists a positive constant $C$, independent of $h$, such that
\begin{equation*}
\|(\bT-\bT_h)\boldsymbol{f}\|_{1,h}=\|\widehat{\bu}-\widehat{\bu}_h\|_{1,h} \leq C h^{\min\{k,s\}} \Big(|\widehat{\bu}|_{1+s,\Omega}+|\widehat{p}|_{s,\Omega}+ \|\boldsymbol{f}\|_{s-1,\Omega} \Big).
\end{equation*}
\end{theorem}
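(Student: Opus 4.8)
The plan is a standard energy (Strang/Céa–type) argument for nonconforming VEM, arranged so that the pressure disappears by working in the discrete divergence-free subspace. Set $\bv_I:=\mathcal{I}\widehat{\bu}\in\BU_h$ and let $\bv_\pi$ be the piecewise polynomial of Lemma~\ref{Int_poly_err}. Write $\widehat{\bu}-\widehat{\bu}_h=(\widehat{\bu}-\bv_I)+(\bv_I-\widehat{\bu}_h)$; the first summand is controlled directly by Lemma~\ref{Int_poly_err} with $t=1+\min\{k,s\}$, so it remains to bound $\bw_h:=\bv_I-\widehat{\bu}_h\in\BU_h$ in the broken $\mathrm{H}^1$-norm. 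Here I would use the commuting property $\div_h\mathcal{I}\widehat{\bu}=\mathcal{R}_h\div_h\widehat{\bu}=0$ (since $\div\widehat{\bu}=0$) together with $\div_h\widehat{\bu}_h=0$ to conclude $\div_h\bw_h=0$; hence $b_h(\bw_h,q_h)=0$ for every $q_h\in\Q_h$ and, elementwise, $\int_K\widehat p\,\div\bw_h=0$. By the coercivity $c_*\nu|\bw_h|_{1,h}^2\le\mathrm{Re}\,a_h(\bw_h,\bw_h)$ and the norm equivalence of Remark~\ref{equivalencia}, it suffices to estimate $a_h(\bw_h,\bw_h)$ from above.

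For the error equation I would subtract the discrete source problem~\eqref{def:oseen_system_sourcediscretproblem} tested with $\bw_h$ (using $b_h(\bw_h,\widehat p_h)=0$), getting $a_h(\bw_h,\bw_h)=a_h(\bv_I,\bw_h)-c_h(\boldsymbol{f},\bw_h)$, and then insert the strong form $-\nu\Delta\widehat{\bu}+(\boldsymbol{\beta}\cdot\nabla)\widehat{\bu}+\nabla\widehat p=\boldsymbol{f}$ (valid a.e.\ because $\widehat{\bu}\in\BH^{1+s}$, $s\ge1$, and where the skew form reduces to $a^{\boldsymbol{\beta}}$ via the polarization of \eqref{eq:skew}), integrating by parts elementwise against $\bw_h$. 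Since $\int_K\widehat p\,\div\bw_h=0$, the pressure survives only in the edge terms, and one reaches the exact identity $a_h(\bw_h,\bw_h)=(\mathrm I)+(\mathrm{II})+(\mathrm{III})+(\mathrm{IV})$, where $(\mathrm I):=a_{h,\text{sym}}(\bv_I,\bw_h)-a_{\text{sym}}(\widehat{\bu},\bw_h)$ is the symmetric VEM consistency error (broken forms), $(\mathrm{II}):=a_{h,\text{skew}}(\bv_I,\bw_h)-\int_\O(\boldsymbol{\beta}\cdot\nabla)\widehat{\bu}\cdot\bw_h$ is the convective consistency error, $(\mathrm{III}):=c(\boldsymbol{f},\bw_h)-c_h(\boldsymbol{f},\bw_h)$, and $(\mathrm{IV}):=\sum_{e\in\CE}\int_e(\nu\partial_{\bn_e}\widehat{\bu}-\widehat p\,\bn_e)\cdot\jump{\bw_h}$ is the nonconformity functional.

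Each term is then estimated with standard tools. For $(\mathrm I)$, insert $\bv_\pi$, use the polynomial consistency of $a_{h,\text{sym}}^K$ (so $a_{h,\text{sym}}(\bv_\pi,\bw_h)=a_{\text{sym}}(\bv_\pi,\bw_h)$, by symmetry of the local form), the continuity bound behind~\eqref{eq:bound_sym}, and Lemma~\ref{Int_poly_err} for $\|\bv_I-\bv_\pi\|_{1,h}$ and $\|\bv_\pi-\widehat{\bu}\|_{1,h}$, obtaining $(\mathrm I)\lesssim h^{\min\{k,s\}}|\widehat{\bu}|_{1+s,\O}\,|\bw_h|_{1,h}$. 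For $(\mathrm{III})$, since $\BPIZ\bw_h-\bw_h$ is $\mathrm{L}^2(K)$-orthogonal to $\boldsymbol{\CP}_k(K)$, subtract a polynomial approximation of $\boldsymbol{f}$ on each $K$ and use $\|\bw_h-\BPIZ\bw_h\|_{0,K}\lesssim h_K|\bw_h|_{1,K}$, giving $(\mathrm{III})\lesssim h^{s}\|\boldsymbol{f}\|_{s-1,\O}\,|\bw_h|_{1,h}$. For $(\mathrm{IV})$, the edge-moment orthogonality $\int_e\jump{\bw_h}\cdot\bq_{k-1}=0$ lets one subtract the $\mathrm{L}^2(e)$-projection of $\nu\partial_{\bn_e}\widehat{\bu}-\widehat p\,\bn_e$ on each edge; a scaled trace inequality, the approximation bounds in $\BH^{s}$/$\H^{s}$, and the NCVEM jump estimate $\|\jump{\bw_h}\|_{0,e}\lesssim h_e^{1/2}|\bw_h|_{1,\cdot}$ (over the elements sharing $e$) yield $(\mathrm{IV})\lesssim h^{\min\{k,s\}}(|\widehat{\bu}|_{1+s,\O}+|\widehat p|_{s,\O})\,|\bw_h|_{1,h}$ — this is where $s\ge1$ is used, so that the edge traces of $\partial_{\bn_e}\widehat{\bu}$ and $\widehat p$ make sense. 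For $(\mathrm{II})$, replace $\BPIvec\nabla\bv_I$ by $\nabla\widehat{\bu}$ and $\BPIZ\bv_I$ by $\widehat{\bu}$ (errors $\lesssim h^{\min\{k,s\}}|\widehat{\bu}|_{1+s}$ by stability of the projections and Lemma~\ref{Int_poly_err}), and handle the remaining projector errors acting on $\bw_h$ — namely $(\mathrm{id}-\BPIZ)\bw_h$ and $(\mathrm{id}-\BPIvec)\nabla\bw_h$ — using only $\|\boldsymbol{\beta}\|_{\infty,\O}$, together with $\|(\mathrm{id}-\BPIZ)\bw_h\|_{0,K}\lesssim h_K|\bw_h|_{1,K}$ and the elementary bound $\|\boldsymbol{\beta}\|_{0,K}\lesssim h_K\|\boldsymbol{\beta}\|_{\infty,\O}$ (which buys back a power of $h$ from orthogonality against $\boldsymbol{\CP}_{k-1}(K)$); collecting these and using $\|\widehat{\bu}\|_{1+s,\O}\lesssim\|\boldsymbol{f}\|_{0,\O}\lesssim\|\boldsymbol{f}\|_{s-1,\O}$ absorbs $(\mathrm{II})$ into the right-hand side of the statement. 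Dividing by $|\bw_h|_{1,h}$, combining with the interpolation bound for $\widehat{\bu}-\bv_I$, and invoking Remark~\ref{equivalencia} finishes the proof.

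I expect the main obstacle to be term $(\mathrm{II})$: because $\boldsymbol{\beta}$ is only in $\BL^\infty$ (plus a little extra integrability), the consistency error of the discretized convective form cannot be improved by polynomial approximation of the coefficient itself, and must instead be absorbed through the $\mathrm{L}^2$-approximation and stability properties of the projections $\BPIZ,\BPIvec$ applied to the non-smooth function $\bw_h$ and to $\bv_I$; keeping every one of these contributions at the correct order in $h$, and packaging them with the data through $\|\widehat{\bu}\|_{1+s,\O}\lesssim\|\boldsymbol{f}\|_{s-1,\O}$, is the technical heart of the argument. The second delicate point, typical of any nonconforming scheme, is term $(\mathrm{IV})$, which relies essentially on the edge-moment orthogonality built into $\BU_h$ and on the extra regularity $\widehat{\bu}\in\BH^{1+s}$, $\widehat p\in\H^{s}$ furnished by Theorem~\ref{th:regularidadfuente}.
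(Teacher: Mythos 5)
Your proposal is correct and follows essentially the same route as the paper: the same splitting through the interpolant $\mathcal{I}\widehat{\bu}$, the same use of $\div_h(\mathcal{I}\widehat{\bu}-\widehat{\bu}_h)=0$ and coercivity to eliminate the pressure, and the same four-block decomposition into symmetric, convective and load consistency errors plus a nonconformity edge functional, each bounded with polynomial insertion, projection orthogonality, the edge-moment orthogonality of $\BU_h$, and the scaled trace/jump estimates. One bookkeeping point deserves care: since $\bw_h\notin\BH_0^1(\O,\mathbb{C})$, the identity $a_{\text{skew}}(\widehat{\bu},\bw_h)=a^{\BBeta}(\widehat{\bu},\bw_h)$ you invoke ``by polarization of \eqref{eq:skew}'' fails by the edge residual $\tfrac12\sum_{e}\int_e(\BBeta\cdot\bn_e)\,\widehat{\bu}\cdot\jump{\bw_h}$, so your term $(\mathrm{II})$ contains, besides the volume projector errors you describe, a convective nonconformity contribution (the $-\tfrac12(\widehat{\bu}\otimes\BBeta)\bn_e$ piece of the paper's $\calN_h$ in \eqref{Consistency:Err}); it is bounded exactly as your $(\mathrm{IV})$, so nothing breaks, but it must be written down. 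A minor upside of your variant is that it avoids the $\bold{W}^{1,\infty}$ regularity of $\BBeta$ that the paper assumes in its estimate \eqref{skew_1}.
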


\begin{proof}
By employing the interpolation operator on the discrete space, i.e., $\mathcal{I}$, we split the difference $\widehat{\bu}-\widehat{\bu}_h=\widehat{\bu}-\mathcal{I} \widehat{\bu} +\mathcal{I} \widehat{\bu}-\widehat{\bu}_h$. An application of the approximation properties of the interpolation operator yields the bound of $\boldsymbol{\eta}_h=\widehat{\bu}-\mathcal{I} \widehat{\bu}$. To estimate the other term, i.e., $\boldsymbol{\delta}_h:=\mathcal{I} \widehat{\bu}-\widehat{\bu}_h$, we apply the coercivity  and the fact that $\div(\mathcal{I} \widehat{\bu}-\widehat{\bu}_h)=0$ (see \cite{MR4032861}) in order to obtain
\begin{multline}
\label{main_H1}
C_{1,p}c_*\nu~\|\boldsymbol{\delta}_h\|_{1,h}^{2} \leq a_h(\mathcal{I} \widehat{\bu},\boldsymbol{\delta}_h )-a_h(\widehat{\bu}_h,\boldsymbol{\delta}_h )\\
 =a_h(\mathcal{I} \widehat{\bu},\boldsymbol{\delta}_h )-c_h(\boldsymbol{f},\boldsymbol{\delta}_h)+b_h(\boldsymbol{\delta}_h,\widehat{p}_h)-b_h(\widehat{\bu}_h,q_h)
= a_h(\mathcal{I} \widehat{\bu},\boldsymbol{\delta}_h )-c_h(\boldsymbol{f},\boldsymbol{\delta}_h)\\
=a_{h,\text{skew}}(\mathcal{I} \widehat{\bu},\boldsymbol{\delta}_h)+a_{h,\text{sym}}(\mathcal{I} \widehat{\bu}-\bu_{\pi},\boldsymbol{\delta}_h)+a_{\text{sym}}(\widehat{\bu}_{\pi}-\widehat{\bu},\boldsymbol{\delta}_h)+a_{\text{sym}}(\widehat{\bu},\boldsymbol{\delta}_h)-c_h(\boldsymbol{f},\boldsymbol{\delta}_h)\\
=\underbrace{a_{h,\text{skew}}(\mathcal{I} \widehat{\bu},\boldsymbol{\delta}_h)-a_{\text{skew}}(\widehat{\bu},\boldsymbol{\delta}_h)}_{A_1}+\underbrace{a_{h,\text{sym}}(\mathcal{I} \widehat{\bu}-\bu_{\pi},\boldsymbol{\delta}_h)+a_{\text{sym}}(\widehat{\bu}_{\pi}-\bu,\boldsymbol{\delta}_h)}_{A_2}\\
+\underbrace{c(\boldsymbol{f},\boldsymbol{\delta}_h)-c_h(\boldsymbol{f},\boldsymbol{\delta}_h)}_{A_3}+\underbrace{\calN_h((\widehat{\bu},\widehat{p}), \boldsymbol{\delta}_h)}_{A_4},
\end{multline}
where the consistency term $\calN_h(\cdot,\cdot)$ is given by 
\begin{multline}
\label{Consistency:Err}
\calN_h((\widehat{\bu},\widehat{p}),\boldsymbol{\delta}_h):=\sum_{ K \in \CT_h} a_{\text{skew}}^K(\widehat{\bu}, \boldsymbol{\delta}_h)+a_{\text{sym}}^K(\widehat{\bu}, \boldsymbol{\delta}_h)+b_h(\boldsymbol{\delta}_h,\widehat{p})-c(\boldsymbol{f}, \boldsymbol{\delta}_h) \\
= \sum_{ e \in \CE_{\text{int}}} \int_e \left(\nabla \widehat{\bu}-\frac{1}{2}  ( \widehat{\bu} \otimes \BBeta) -\widehat{p} \bold{I}\right) \bn_e \cdot \jump{\boldsymbol{\delta}_h},
\end{multline} 
where $\bold{I}$ is identity matrix of size $2 \times 2$. This term is obtained using similar argument of those in \cite{MR3507277,MR3959470}.
By using the approximation properties of the interpolation operator and polynomial representative, we bound  each term in  \eqref{main_H1}. $\calN_h((\widehat{\bu},\widehat{p}), \boldsymbol{\delta}_h)$ is the consistency error appeared due to non-conforming approximation of the discrete space. In order to estimate the Term $A_1$, first we note that
\begin{equation}
A_1=\sum_{K\in\CT_h}\left(\underbrace{a_{\text{skew}}^K(\mathcal{I} \widehat{\bu}-\widehat{\bu},\boldsymbol{\delta}_h)}_{B_1}+\underbrace{a_{h,\text{skew}}^K(\mathcal{I} \widehat{\bu},\boldsymbol{\delta}_h)-a_{\text{skew}}^K(\mathcal{I} \widehat{\bu},\boldsymbol{\delta}_h)}_{B_2}\right).
\end{equation}
Now, we estimate term $B_1$ as follows:
\begin{multline*}
a_{\text{skew}}^K(\mathcal{I} \widehat{\bu}-\widehat{\bu},\boldsymbol{\delta}_h)\leq C\left(\|\boldsymbol{\beta}\|_{\infty,K}\|\mathcal{I} \widehat{\bu}-\widehat{\bu}\|_{1,K}\|\boldsymbol{\delta}_h\|_{1,K}\right)\\\leq Ch_K^{\min\{s,k\}}\|\boldsymbol{\beta}\|_{\infty,K}|\bu|_{1+s,D(K)}\|\boldsymbol{\delta}_h\|_{1,K}.
\end{multline*}
 To estimate $B_2$,  is necessary to note that for each $K\in\CT_h$, $\bu,\bv\in\BH^1(K)$ and $\boldsymbol{\beta}\in  \BL^{\infty}(K)$, we have:
$$\int_K(\boldsymbol{\beta}\cdot\nabla)\bu\cdot\bv=\int_K(\nabla\bu)\boldsymbol{\beta}\cdot\bv=\int_K\nabla\bu:(\boldsymbol{\beta}\otimes\bv)^\texttt{t}.$$
 For each polygon $K\in\CT_h$, employing the orthogonality property of the $\BL^2$ projection operator, we obtain
\begin{equation*}
\begin{split}
&\int_K \Big ( (\BPIvec (\nabla \mathcal{I} \widehat{\bu}))\BBeta  \cdot \BPIZ \boldsymbol{\delta}_h-    (\nabla \mathcal{I} \widehat{\bu}) \BBeta\cdot  \boldsymbol{\delta}_h \Big )\\
&=\int_K \Big ( \left(\BPIvec (\nabla \mathcal{I} \widehat{\bu})-\nabla \mathcal{I} \widehat{\bu}\right): (\BBeta\otimes(\BPIZ \boldsymbol{\delta}_h-\boldsymbol{\delta}_h))^\texttt{t}\\
& \quad +\int_K  \left(\BPIvec (\nabla \mathcal{I} \widehat{\bu})-\nabla \mathcal{I} \widehat{\bu}\right): \left( (\BBeta\otimes\boldsymbol{\delta}_h)^\texttt{t}-\BPIvec ((\BBeta\otimes\boldsymbol{\delta}_h)^\texttt{t})\right)\\
& \quad +\int_K\left((\nabla \mathcal{I} \widehat{\bu}) \BBeta-\BPIZ ((\nabla \mathcal{I} \widehat{\bu}) \BBeta)\right)\cdot  (\BPIZ \boldsymbol{\delta}_h-\boldsymbol{\delta}_h).
\end{split}
\end{equation*}
Now assuming that $\nabla \widehat{\bu} \in \bold{H}^{s}(K)$, $  \BBeta \in \bold{W}^{1,\infty}(K)$ and $\boldsymbol{\delta}_h \in \bold{H}^1(K)$ and approximation properties of $\BPIZ $, continuity of $\BL^2$ inner product, it follows that :
\begin{equation}
\label{skew_1}
\begin{split}
&\int_K \Big ( (\BPIvec (\nabla \mathcal{I} \widehat{\bu}))\BBeta  \cdot \BPIZ \boldsymbol{\delta}_h-    (\nabla \mathcal{I} \widehat{\bu}) \BBeta\cdot  \boldsymbol{\delta}_h \Big )\\
& \leq  \|\BPIvec (\nabla \mathcal{I} \widehat{\bu})-\nabla \mathcal{I} \widehat{\bu}\|_{0,K}\|\BBeta\otimes(\BPIZ \boldsymbol{\delta}_h-\boldsymbol{\delta}_h)^\texttt{t}\|_{0,K} \\
& \quad  +\|\BPIvec (\nabla \mathcal{I} \widehat{\bu})-\nabla \mathcal{I} \widehat{\bu}\|_{0,K}\|(\BBeta\otimes\boldsymbol{\delta}_h)^\texttt{t}-\BPIvec ((\BBeta\otimes\boldsymbol{\delta}_h)^\texttt{t})\|_{0,K} \\
& \quad  +\|(\nabla \mathcal{I} \widehat{\bu}) \BBeta-\BPIZ ((\nabla \mathcal{I} \widehat{\bu}) \BBeta)\|_{0,K}\|\BPIZ \boldsymbol{\delta}_h-\boldsymbol{\delta}_h\|_{0,K} \\
& \leq Ch_K^{\min \{s,k \}}\|\BBeta\|_{\bold{W}^{1,\infty}(K)}|\widehat{\bu}|_{1+s,K}|\boldsymbol{\delta}_h|_{1,K}.
\end{split}
\end{equation}
Borrowing the analogous arguments as previous estimate, we obtain: 
\begin{equation}
\label{skew_2}
\int_K \Big ( \BPIvec (\nabla \boldsymbol{\delta}_h)\BBeta  \cdot \BPIZ \mathcal{I} \widehat{\bu}-  (\nabla \boldsymbol{\delta}_h)\BBeta  \cdot  \mathcal{I} \widehat{\bu} \Big) 
\leq C(\BBeta) h_K^{\min\{s,k\}} |\widehat{\bu}|_{1+s,K}|\boldsymbol{\delta}_h|_{1,K}.
\end{equation}
Thus, from the two estimates above (\eqref{skew_1}, \eqref{skew_2}), it is obtained that 
$$B_2\leq Ch_K^{\min\{s,k\}}\|\BBeta\|_{\bold{W}^{1,\infty}(K)}|\widehat{\bu}|_{1+s,K}|\boldsymbol{\delta}_h|_{1,K},$$
and finally considering the sum over all elements $K$
\begin{equation}
\label{Estimate:A1}
A_1\leq Ch^{\min\{s,k\}}\|\BBeta\|_{\bold{W}^{1,\infty}(\Omega)}|\widehat{\bu}|_{1+s,\Omega}|\boldsymbol{\delta}_h|_{1,h}.
\end{equation}
Now our task is to estimate the term $A_2$. To do this task, we begin with the first part of this term by using the approximation properties of the interpolation operator  and polynomial representative (cf. Lemma~\ref{Int_poly_err}) in the following way
\begin{equation} 
\label{Err:dis}
 \begin{split}
\sum_{K \in \CT_h} a_{h,\text{sym}}^K(\mathcal{I} \widehat{\bu}-\widehat{\bu}_{\pi},\boldsymbol{\delta}_h)& \leq \sum_{K \in \CT_h} a_{h,\text{sym}}^K(\mathcal{I} \widehat{\bu}-\widehat{\bu},\boldsymbol{\delta}_h)+\sum_{K \in \CT_h} a_{h,\text{sym}}^K(\widehat{\bu}-\widehat{\bu}_{\pi},\boldsymbol{\delta}_h) \\
& \leq C h^{\min\{s,k\}} |\widehat{\bu}|_{1+s,\Omega} |\boldsymbol{\delta}_h|_{1,h}. 
\end{split}
\end{equation}

 Now for the second part of $A_2$,  we invoke the polynomial approximation property given  in Lemma~\ref{Int_poly_err} in order to obtain
\begin{equation}
\label{Err:Con}
\sum_{K \in \CT_h} a^K_{h,\text{sym}}(\widehat{\bu}_{\pi}-\widehat{\bu}, \boldsymbol{\delta}_h) \leq C h^{\min\{s,k\}} |\widehat{\bu}|_{1+s,\Omega} |\boldsymbol{\delta}_h|_{1,h}.
\end{equation}
Hence, gathering \eqref{Err:dis} and \eqref{Err:Con} we have $A_2\leq Ch^{\min\{s,k\}} |\widehat{\bu}|_{1+s,\Omega} |\boldsymbol{\delta}_h|_{1,h}$.
%
%
To bound $A_3$, we use the approximation properties of the projection operator $\BL^2$ and, following  the arguments of \cite{MR4032861} we obtain
\begin{equation}
\label{Err:load}
A_3=c(\boldsymbol{f},\boldsymbol{\delta}_h)-c_h(\boldsymbol{f},\boldsymbol{\delta}_h) \leq C h^{\min\{s,k\}} |\boldsymbol{f}|_{s-1,\Omega} |\boldsymbol{\delta}_h|_{1,h}.
\end{equation}
%
%
%
Now, we focus to bound the consistency error $\calN_h(\cdot,\cdot)$. 
 For a better representation of the analysis, we define
$	\boldsymbol{\Bgama}:= \nabla \widehat{\bu}- \frac{1}{2}  (\BBeta \otimes \widehat{\bu})^T -\widehat{p} \bold{I}$. Consequently, we rewrite \eqref{Consistency:Err} as
\begin{equation*}
\calN_h((\widehat{\bu},\widehat{p}),\boldsymbol{\delta}_h)= \sum_{ e \in \CE_{\text{int}}} \int_e \Bgama \bn_e \cdot \jump{\boldsymbol{\delta}_h}. 
\end{equation*}
Further, following the definition of $\BU_h$ \eqref{NCVEM:space}, we have
\begin{equation}
\int_e \jump{\boldsymbol{\delta}_h}_e \cdot \bq_{k-1}=0 \,\ \forall  \bq_{k-1} \in \BPL_{k-1}(e),  \ \ \forall e \in  \CE_{\text{int}}.
\end{equation}
 By employing orthogonality of the polynomial projection operator, we rewrite the term as follows
\begin{multline*}
\calN_h((\widehat{\bu},\widehat{p}),\boldsymbol{\delta}_h)  = \sum_{ e \in \CE_{\text{int}}} \int_e (\Bgama-\mathcal{R}_h^K\Bgama) \bn_e \cdot \jump{\boldsymbol{\delta}_h-\boldsymbol{\mathcal{P}}_0^e \boldsymbol{\delta}_h} \\
 \leq \sum_{ e \in \CE_{\text{int}}} \| \Bgama-\mathcal{R}_h^K \Bgama \|_{e,0}  ~\| \boldsymbol{\delta}_h-\boldsymbol{\mathcal{P}}_0^e \boldsymbol{\delta}_h\|_{e,0},
\end{multline*}
where $\boldsymbol{\mathcal{P}}_0^e \boldsymbol{\delta}_h$ is the average value of $\boldsymbol{\delta}_h$  on edge $e$ and $\mathcal{R}_h|_K=\mathcal{R}_h^K$ \eqref{eq:estimateL2}. By using trace inequality and approximation properties of the $\BL^2$ projection operator, we derive as 
\begin{equation}
\label{jump:1}
\| \Bgama-\mathcal{R}_h^K \Bgama \|_{e,0} \leq C h^{\min\{s,k\}-\frac{1}{2}} |\Bgama|_{s,K} \quad  e \subset \partial K .
\end{equation}
By using the approximation property of the projection operator $\boldsymbol{\mathcal{P}}_0^e$, we derive the bound as follows: \cite{ciarlet2002finite}
\begin{equation}
\label{jump:2}
\|\jump{\boldsymbol{\delta}_h} \|_{0,e} \leq C h^{1/2} (|\boldsymbol{\delta}_h|_{1,K^+}+|\boldsymbol{\delta}_h|_{1,K^-}) \quad e = \partial K^+ \cap \partial K^-.
\end{equation}
By employing inequalities $\eqref{jump:1}$, and $\eqref{jump:2}$, we bound the consistency error as follows
\begin{equation}
\label{Vcrime}
\calN_h((\widehat{\bu},\widehat{p}),\boldsymbol{\delta}_h) \leq C h^{\min\{s,k\}} \left( |\widehat{\bu}|_{1+s,\Omega}+|\widehat{p}|_{s,\Omega}\right) |\boldsymbol{\delta}_h|_{1,h}.
\end{equation}
Upon inserting estimates \eqref{Estimate:A1},\eqref{Err:dis}, \eqref{Err:Con}, \eqref{Err:load}, and \eqref{Vcrime} into \eqref{main_H1}, we obtain the bound 
\begin{equation}
\label{Err:H1}
\|\widehat{\bu}-\widehat{\bu}_h\|_{1,h} \leq C h^{\min\{s,k\}} \left(|\widehat{\bu}|_{1+s,\Omega}+|\widehat{p}|_{s,\Omega} +|\boldsymbol{f}|_{s-1,\Omega} \right).
\end{equation}
Further following the analogous arguments as \cite[Theorem~13]{MR4032861}, and the bound of polynomial consistency error for the convective term, we derive the estimate for pressure variable, i.e.,
\begin{equation}
\label{Err:P}
\|\widehat{p}-\widehat{p}_h\|_{0,\Omega} \leq C h^{\min\{s,k\}} \left(|\widehat{\bu}|_{1+s,\Omega}+|\widehat{p}|_{s,\Omega} +|\boldsymbol{f}|_{s-1,\Omega} \right).
\end{equation} 
Upon using \eqref{Err:H1} and \eqref{Err:P} we obtain  the desire result.

\subsection{$\BL^2$ Error estimates for the velocity}
In this part, we would like to bound the error in $\BL^2$ norm. To achieve the goal, let $\O$ be an open, bounded, and convex domain. We first define the dual problem as follows: Find $(\Bpsi, \xi) \in \mathcal{X}$ such that
\begin{align}
- \nu \Delta \Bpsi-\div (\Bpsi \otimes \BBeta)-\nabla \xi & =(\widehat{\bu}-\widehat{\bu}_h)  \quad \text{in} \  \Omega,  \label{L2:1}\\
 \text{div} \Bpsi & =0 \quad \text{in} \ \Omega,  \label{L2:2}\\
 (\xi,1) & = 0 \quad \text{in} \ \Omega,  \label{L2:3}\\
 \Bpsi& =0 \quad \text{on} \ \partial \Omega. \label{L2:4}
\end{align} 
The model problem \eqref{L2:1}-\eqref{L2:4} is well posed. By applying the classical regularity theorem, we derive that
\begin{equation}
\label{classic_reg}
\| \Bpsi \|_{2, \Omega}+ \| \xi \|_{1, \Omega} \leq \|\widehat{\bu}-\widehat{\bu}_h\|_{0,\Omega}.
\end{equation} 
By multiplying $\bv_h=\widehat{\bu}-\widehat{\bu}_h$ in \eqref{L2:1}, we derive that
\begin{equation}
\label{weak:L2}
\int_{\Omega} \Big ( - \nu \Delta \Bpsi-\div(\Bpsi \otimes \BBeta)-\nabla \xi \Big) (\widehat{\bu}-\widehat{\bu}_h)=\| \widehat{\bu}-\widehat{\bu}_h\|_{0,\Omega}^2.
\end{equation}
Now, since  $\div\BBeta=0$, by employing integration by parts, we rewrite \eqref{weak:L2} as follows
\begin{multline}
\label{L2:main:2}
\| \widehat{\bu}-\widehat{\bu}_h \|_{0,\Omega}^2= \widehat{a}(\widehat{\bu}-\widehat{\bu}_h,\Bpsi)-b_h(\widehat{\bu}-\widehat{\bu}_h,\xi)\\
+ \sum_{ e \in \CE} \int_e \left(-\nabla \Bpsi-\frac{1}{2} (\Bpsi \otimes \BBeta) -\xi \bold{I}\right) \bn_e \cdot \jump{\widehat{\bu}-\widehat{\bu}_h}.
\end{multline} 
By employing the arguments as \eqref{Vcrime}, and classical regularity result \eqref{classic_reg}, we bound the following term as follows
\begin{multline}
\label{L2:main:3}
\sum_{ e \in \CE} \int_e \left(-\nabla \Bpsi-\frac{1}{2} (\Bpsi \otimes \BBeta) - \xi\bold{I}\right) \bn_e \cdot \jump{\widehat{\bu}-\widehat{\bu}_h}  \leq C h (|\Bpsi|_{2,\Omega}+|\xi|_{1,\Omega}) \|\widehat{\bu}-\widehat{\bu}_h\|_{1,h} \\
 \leq C h^{\min\{s,k\}+1} (|\widehat{\bu}|_{1+s,\Omega}+|\widehat{p}|_{s,\Omega}+|\boldsymbol{f}|_{s-1,\Omega}) \| \widehat{\bu}-\widehat{\bu}_h\|_{0,\Omega}.
\end{multline}
Further, using  the fact that $ b_h(\widehat{\bu}-\widehat{\bu}_h,\mathcal{R}_h\xi)=0$, we rewrite the terms as follows
\begin{multline}
\label{L2:interior}
a(\widehat{\bu}-\widehat{\bu}_h,\Bpsi)-b_h(\widehat{\bu}-\widetilde{\bu}_h,\xi)\\
=a(\widehat{\bu}-\widehat{\bu}_h,\Bpsi-\mathcal{I}\Bpsi)-b_h(\widehat{\bu}-\widehat{\bu}_h,\xi-\mathcal{R}_h\xi)+a(\widehat{\bu}-\widehat{\bu}_h,\mathcal{I}\Bpsi). 
\end{multline}
By employing the estimate~\eqref{Err:H1}, approximation properties of the interpolation operator, and regularity result (Eqn~\eqref{classic_reg}) we find
\begin{multline}
\label{L2:main:4}
 a(\widehat{\bu}-\widehat{\bu}_h,\Bpsi-\mathcal{I}\Bpsi)-b_h(\widehat{\bu}-\widehat{\bu}_h,\xi-\mathcal{R}_h\xi) \\
 \leq  C\| \BBeta \|_{\infty,\Omega} \| \widehat{\bu}-\widehat{\bu}_h \|_{1,h} \|\Bpsi-\mathcal{I}\Bpsi \|_{1,h}+  \| \widehat{\bu}-\widehat{\bu}_h \|_{1,h} \| \xi-\mathcal{R}_h\xi \|_{0,\Omega} \\
\leq  C\| \BBeta \|_{\infty,\Omega}  h^{\min\{s,k\}}  (|\widehat{\bu}|_{1+s,\O}+|\widehat{p}|_{s,\O}+|\boldsymbol{f}|_{s-1,\O})  (|\Bpsi|_{2,\Omega}+\| \xi \|_{1,\Omega}) h \\
  \leq C\| \BBeta \|_{\infty,\Omega} h^{\min\{s,k\}+1}  (|\widehat{\bu}|_{1+s,\O}+|\widehat{p}|_{s,\O}+|\boldsymbol{f}|_{s-1,\O}) \| \widehat{\bu}-\widehat{\bu}_h\|_{0,\Omega}.
\end{multline}
Further, with the estimate $b(\Bpsi,\widehat{p}-\widehat{p}_h)=0$, we rewrite the last term of \eqref{L2:interior} as follows
\begin{equation}
\label{L2:main}
\begin{split}
a(\widehat{\bu}-\widehat{\bu}_h,\mathcal{I}\Bpsi) &=a(\widehat{\bu},\mathcal{I}\Bpsi)-a(\widehat{\bu}_h,\mathcal{I}\Bpsi) \\
& = \Big ( a(\widehat{\bu},\mathcal{I}\Bpsi)+b(\mathcal{I}\Bpsi ,\widehat{p})-c(\boldsymbol{f},\mathcal{I}\Bpsi ) \Big) + \Big(a_h(\widehat{\bu}_h, \mathcal{I}\Bpsi)-a(\widehat{\bu}_h, \mathcal{I}\Bpsi) \Big) \\
&+ \Big ( c(\boldsymbol{f},\mathcal{I}\Bpsi )- c_h(\boldsymbol{f},\mathcal{I}\Bpsi )\Big)+b(\Bpsi-\mathcal{I}\Bpsi, \widehat{p}-\widehat{p}_h).
\end{split}
\end{equation}
Since $\mathcal{I}\Bpsi \in \BU_h$, the term $a(\widehat{\bu},\mathcal{I}\Bpsi)+b(\mathcal{I}\Bpsi ,\widehat{p})-c(\boldsymbol{f},\mathcal{I}\Bpsi )$ measures the inconsistency due to non-conforming property of the discrete space.
By using analogous arguments as \eqref{Consistency:Err}, we bound the term 
\begin{equation}
\label{L2:Variational_Crime}
a(\widehat{\bu},\mathcal{I}\Bpsi)+b(\mathcal{I}\Bpsi ,\widehat{p})-c(\boldsymbol{f},\mathcal{I}\Bpsi ) \leq C h^{\min\{s,k\}+1} ( |\widehat{\bu}|_{1+s,\O} + |\widehat{p}|_{s,\Omega})\|\widehat{\bu}-\widehat{\bu}_h \|_{0,\Omega}.
\end{equation}
Upon employing the boundedness of the $\BL^2$ projection operator, result \eqref{classic_reg}, we bound the discrete load term as follows
\begin{equation}
\label{L2: load}
c(\boldsymbol{f},\mathcal{I}\Bpsi )- c_h(\boldsymbol{f},\mathcal{I}\Bpsi ) \leq C h^{\min\{s,k\}+1} |\boldsymbol{f}|_{s-1,\O} \|\widehat{\bu}-\widehat{\bu}_h\|_{0,\Omega}.
\end{equation}
Using the approximation properties of the interpolation operator and estimate~\eqref{Err:H1}, we derive that
\begin{equation}
\label{L2: pressure}
b( \Bpsi-\mathcal{I}\Bpsi,\widehat{p}-\widehat{p}_h)\leq C h^{\min\{s,k\}+1} |\widehat{\bu}|_{1+s,\O} \|\widehat{\bu}-\widehat{\bu}_h\|_{0,\Omega}.
\end{equation} 
Now, we focus to bound the term $(a_h(\widehat{\bu}_h, \mathcal{I}\Bpsi)-a(\widehat{\bu}_h, \mathcal{I}\Bpsi))$ as follows
\begin{equation}
\begin{split}
\label{Diff_Discrete}
a_h(\widehat{\bu}_h, \mathcal{I}\Bpsi)-&a(\widehat{\bu}_h, \mathcal{I}\Bpsi) =\sum_{K \in \CT_h } \Big [ a_h^K(\widehat{\bu}_h-\BPIZ \widehat{\bu}, \mathcal{I}\Bpsi- \BPIO \Bpsi  ) \\
& -a^K(\widehat{\bu}_h-\BPIZ \widehat{\bu}, \mathcal{I}\Bpsi- \BPIO \Bpsi ) 
+a_h^K(\BPIZ \widehat{\bu}, \mathcal{I}\Bpsi)-a^K(\BPIZ \widehat{\bu}, \mathcal{I}\Bpsi) \\
&
+ a_h^K(\widehat{\bu}_h, \BPIO \Bpsi)-a^K(\widehat{\bu}_h, \BPIO \Bpsi) \Big ].
\end{split}
\end{equation}
By using the approximation properties of the projection operator and interpolation operator, we bound the term as follows
\begin{equation}
\label{L2:diff:con:dis}
\sum_{K \in \CT_h }a_h^K(\widehat{\bu}_h, \mathcal{I}\Bpsi)-a^K(\widehat{\bu}_h, \mathcal{I}\Bpsi) \leq C h^{\min\{s,k\}+1} |\widehat{\bu}|_{1+s,\Omega} \|\widehat{\bu}-\widehat{\bu}_h\|_{0,\Omega}.
\end{equation}
By inserting the estimates \eqref{L2:Variational_Crime}, \eqref{L2: load},\eqref{L2: pressure}, \eqref{Diff_Discrete}, \eqref{L2:diff:con:dis} into \eqref{L2:main}, we obtain 
\begin{equation}
\label{part:L2}
 a(\widehat{\bu}-\widehat{\bu}_h, \mathcal{I} \psi) \leq C h^{\min\{s,k\}+1} |\widehat{\bu}|_{1+s,\Omega} \|\widehat{\bu}-\widehat{\bu}_h\|_{0,\Omega}.
\end{equation}
Using the estimates \eqref{L2:main:3}, \eqref{L2:main:4}, and \eqref{part:L2} into \eqref{L2:main:2}, we derive 
\begin{equation*}
\|\widehat{\bu}-\widehat{\bu}_h\|_{0,\Omega} \leq C h^{\min\{s,k\}+1} (|\widehat{\bu}|_{1+s,\O}+|\widehat{p}|_{s,\O}+|\boldsymbol{f}|_{s-1,\O}).
\end{equation*}
\end{proof}
We have the following consequence
\begin{lemma}
\label{convergence:T}
There exists a constant $C>0$ independent of mesh size $h$ such that 
\begin{equation*}
\|(\bT-\bT_h)\boldsymbol{f}\|_{0,\Omega} \leq C h^{\min\{s,k\}+1} (|\widehat{\bu}|_{1+s,\O}+|\widehat{p}|_{s,\O}+|\boldsymbol{f}|_{s-1,\O}).
\end{equation*}
\end{lemma}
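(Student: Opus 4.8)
The plan is to observe that Lemma~\ref{convergence:T} is an immediate corollary of the $\BL^2$ velocity estimate just proved inside the preceding theorem, together with the very definitions of the solution operators. Recall that $\bT\boldsymbol{f}=\widehat{\bu}$ and $\bT_h\boldsymbol{f}=\widehat{\bu}_h$, where $(\widehat{\bu},\widehat{p})\in\mathcal{X}$ solves the continuous source problem \eqref{def:oseen_system_weak_source} and $(\widehat{\bu}_h,\widehat{p}_h)\in\mathcal{X}_h$ solves the discrete source problem \eqref{def:oseen_system_sourcediscretproblem}. Hence $(\bT-\bT_h)\boldsymbol{f}=\widehat{\bu}-\widehat{\bu}_h$, and the last displayed inequality in the proof of the theorem reads precisely
\begin{equation*}
\|\widehat{\bu}-\widehat{\bu}_h\|_{0,\Omega}\leq C h^{\min\{s,k\}+1}\bigl(|\widehat{\bu}|_{1+s,\O}+|\widehat{p}|_{s,\O}+|\boldsymbol{f}|_{s-1,\O}\bigr),
\end{equation*}
which is exactly the claim, with the same constant $C$ independent of $h$. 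So the proof is one line: substitute $\|(\bT-\bT_h)\boldsymbol{f}\|_{0,\Omega}=\|\widehat{\bu}-\widehat{\bu}_h\|_{0,\Omega}$ and invoke the estimate.

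If one wishes to be slightly more self-contained, the steps to carry out would be: first, recall the duality (Aubin--Nitsche) argument that underlies the theorem, namely introducing the dual Stokes-type problem \eqref{L2:1}--\eqref{L2:4} with right-hand side $\widehat{\bu}-\widehat{\bu}_h$ and using the convexity of $\O$ to obtain the $H^2\times H^1$ regularity bound \eqref{classic_reg}; second, test that dual problem against $\widehat{\bu}-\widehat{\bu}_h$, integrate by parts elementwise and collect the nonconformity jump terms as in \eqref{L2:main:2}; third, bound the jump/consistency contribution by the already-established broken $\H^1$ estimate \eqref{Err:H1} times the extra $h$ from the dual regularity, giving the $h^{\min\{s,k\}+1}$ rate in \eqref{L2:main:3}; fourth, handle the remaining bilinear-form terms by inserting the interpolant $\mathcal{I}\Bpsi$ and the projection $\mathcal{R}_h\xi$, using $b_h(\widehat{\bu}-\widehat{\bu}_h,\mathcal{R}_h\xi)=0$, the approximation properties of Lemma~\ref{Int_poly_err}, and the polynomial consistency of $a_h^K$, $c_h^K$ to arrive at \eqref{L2:main:4} and \eqref{part:L2}; finally, assemble these in \eqref{L2:main:2}.

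Since all of that work is already contained in the proof of the theorem preceding the lemma, there is no real obstacle here: the only ``step'' is the identification $(\bT-\bT_h)\boldsymbol{f}=\widehat{\bu}-\widehat{\bu}_h$, which is tautological from \eqref{eq:operador_solucion_u} and the definition of $\bT_h$. I would therefore state the proof simply as: ``The result follows directly from the $\BL^2$ error estimate for the velocity established in the previous theorem, together with the definitions $\bT\boldsymbol{f}=\widehat{\bu}$ and $\bT_h\boldsymbol{f}=\widehat{\bu}_h$.'' The part that genuinely required care — controlling the nonconformity term with the extra power of $h$ coming from elliptic regularity on a convex domain — was the duality argument already dispatched in the theorem, so nothing new is needed at the level of this lemma.
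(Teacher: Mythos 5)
Your proposal is correct and matches the paper exactly: the paper itself disposes of this lemma with the remark that it is straightforward from the previous result (the $\BL^2$ velocity estimate concluding the preceding theorem) together with Theorem~2.1, which is precisely your one-line identification $(\bT-\bT_h)\boldsymbol{f}=\widehat{\bu}-\widehat{\bu}_h$. The recapitulation of the duality argument is accurate but not needed, since that work is already done in the theorem.
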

The above statement is state forward due to previous result, and Theorem~\ref{th:regularidadfuente}. The next results establish the convergence of the operator $\bT^{*}_h$ to $\bT^{*}$ as $h$ goes to zero in broken norm and in the $\BL^2$ norm. The proof can be obtained repeating the same arguments as those used in the previous section.
\begin{theorem}
\label{Convergence_dual}
Let $\boldsymbol{f}\in\BL^2(\O,\mathbb{C}) \cap \BH^{s^{*}-1}(\Omega)$ be such that $\widehat{\bu}^{*}:=\bT^{*}\boldsymbol{f}$ and $\widehat{\bu}_h^{*}:=\bT_h^{*}\boldsymbol{f}$. Then, there exists a positive constant $C$, independent of $h$, such that
\begin{equation*}
\|(\bT^{*}-\bT_h^{*})\boldsymbol{f}\|_{1,h}=\|\widehat{\bu}^{*}-\widehat{\bu}_h^{*}\|_{1,h} \leq C h^{\min\{k,s^{*}\}} \Big(|\widehat{\bu}^*|_{1+s^{*},\Omega}+|\widehat{p}^{*}|_{s^*,\Omega}+ \|\boldsymbol{f}\|_{s^{*}-1,\Omega} \Big).
\end{equation*}
\begin{equation*}
\|(\bT^{*} -\bT_h^{\ast})\boldsymbol{f}\|_{0,\Omega}=\|\widehat{\bu}^{\ast}-\widehat{\bu}_h^{\ast}\|_{0,\Omega} \leq C h^{\min\{k,s^*\}+1} \Big(|\widehat{\bu}^*|_{1+s^*,\Omega}+|\widehat{p}^*|_{s^*,\Omega}+ \|\boldsymbol{f}\|_{s^*-1,\Omega} \Big).
\end{equation*}
where $C$ is a positive constant independent of $h$.
\end{theorem}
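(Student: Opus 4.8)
The plan is to transcribe, essentially line by line, the proofs of the previous theorem and of Lemma~\ref{convergence:T}, now carried out for the adjoint source problems \eqref{def:oseen_system_weak_dual_source} and \eqref{def:oseen_system_sourcediscretproblemdual}, invoking the dual regularity estimate of Theorem~\ref{th:regularidadfuente_dual} in place of Theorem~\ref{th:regularidadfuente}. The only structural differences between the primal and the adjoint settings are that the two arguments of $a(\cdot,\cdot)$ and $a_h(\cdot,\cdot)$ are interchanged and that $b(\cdot,\cdot)$ and $b_h(\cdot,\cdot)$ carry the opposite sign. First I would observe that these changes do not affect the available tools: since $a_{h,\text{skew}}^K(\bv_h,\bv_h)=0$ and $a_{h,\text{skew}}^K(\cdot,\cdot)$ is antisymmetric in its two slots, the discrete coercivity $c_*\nu|\bv_h|_{1,h}^2\le a_h(\bv_h,\bv_h)$ and the inf-sup condition \eqref{ec:inf-sup_disc} are unchanged, so $\bT_h^*$ is well defined with the same stability bounds for $\widehat{\bu}_h^*$ and $\widehat{p}_h^*$ as in the primal case.

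For the broken $\H^1$ estimate I would split $\widehat{\bu}^*-\widehat{\bu}_h^*=\boldsymbol{\eta}_h^*+\boldsymbol{\delta}_h^*$ with $\boldsymbol{\eta}_h^*:=\widehat{\bu}^*-\mathcal{I}\widehat{\bu}^*$, bounded directly by Lemma~\ref{Int_poly_err}, and $\boldsymbol{\delta}_h^*:=\mathcal{I}\widehat{\bu}^*-\widehat{\bu}_h^*\in\BU_h$. Using coercivity and the fact that $\div\boldsymbol{\delta}_h^*=0$ (again following \cite{MR4032861}), the estimate for $\boldsymbol{\delta}_h^*$ reduces, exactly as in \eqref{main_H1}, to four contributions: a skew-symmetric term, controlled via $\BL^2$-projection orthogonality together with the $\mathbf{W}^{1,\infty}(\Omega)$ regularity of $\BBeta$ as in \eqref{skew_1}--\eqref{Estimate:A1}; a symmetric term handled by the interpolation and polynomial approximation of Lemma~\ref{Int_poly_err} as in \eqref{Err:dis}; the load inconsistency $c(\boldsymbol{f},\cdot)-c_h(\boldsymbol{f},\cdot)$, bounded as in \eqref{Err:load}; and the nonconformity term, which this time reads $\calN_h^*((\widehat{\bu}^*,\widehat{p}^*),\boldsymbol{\delta}_h^*)=\sum_{e\in\CE_{\text{int}}}\int_e\big(\nabla\widehat{\bu}^*+\tfrac12(\widehat{\bu}^*\otimes\BBeta)+\widehat{p}^*\mathbf{I}\big)\bn_e\cdot\jump{\boldsymbol{\delta}_h^*}$, obtained by elementwise integration by parts in \eqref{def:oseen_system_sourcediscretproblemdual} using $\div\BBeta=0$ (the $+$ signs, compared with \eqref{Consistency:Err}, being exactly the trace of the interchanged arguments in $a$ and the opposite sign of $b$). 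As in \eqref{jump:1}--\eqref{Vcrime}, this term is controlled by inserting the $\BL^2$-projection $\mathcal{R}_h$ and the edge average, using the edge orthogonality built into \eqref{NCVEM:space}, a trace inequality and \eqref{eq:estimateL2}. Collecting the bounds and applying Theorem~\ref{th:regularidadfuente_dual} yields the first estimate; the pressure error $\|\widehat{p}^*-\widehat{p}_h^*\|_{0,\Omega}$ then follows from \eqref{ec:inf-sup_disc} exactly as in \cite[Theorem~13]{MR4032861}.

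For the $\BL^2$ estimate I would use the convexity of $\Omega$ and introduce the problem dual to the adjoint one: find $(\Bpsi,\xi)\in\mathcal{X}$ with $-\nu\Delta\Bpsi+(\BBeta\cdot\nabla)\Bpsi+\nabla\xi=\widehat{\bu}^*-\widehat{\bu}_h^*$, $\div\Bpsi=0$, $(\xi,1)=0$ and $\Bpsi|_{\partial\Omega}=\mathbf{0}$, which is well posed with $\|\Bpsi\|_{2,\Omega}+\|\xi\|_{1,\Omega}\le\|\widehat{\bu}^*-\widehat{\bu}_h^*\|_{0,\Omega}$ by the classical regularity result \eqref{classic_reg}. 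Testing with $\widehat{\bu}^*-\widehat{\bu}_h^*$ and integrating by parts elementwise, I would reproduce \eqref{L2:main:2}--\eqref{part:L2}: the interelement term picks up the extra power of $h$ from the $\H^2\times\H^1$ regularity of $(\Bpsi,\xi)$ combined with the broken estimate just proved, while the interior contributions, after subtracting $\mathcal{I}\Bpsi$ and $\mathcal{R}_h\xi$ and using the orthogonality relations $b_h(\widehat{\bu}^*-\widehat{\bu}_h^*,\mathcal{R}_h\xi)=0$ and $b(\Bpsi,\widehat{p}^*-\widehat{p}_h^*)=0$, are split and bounded as in \eqref{L2:main:4}--\eqref{L2:diff:con:dis}, using $\mathcal{I}\Bpsi\in\BU_h$, the consistency and stability of $a_{h,\text{sym}}^K(\cdot,\cdot)$, and the computability of $\BPIZ$, $\BPIO$, $\BPIvec$. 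Dividing by $\|\widehat{\bu}^*-\widehat{\bu}_h^*\|_{0,\Omega}$ gives the second estimate.

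The main obstacle, beyond bookkeeping, is pinning down the adjoint consistency term $\calN_h^*$ with the correct signs: one must verify that the elementwise integration by parts of the adjoint momentum equation produces precisely the tensor $\nabla\widehat{\bu}^*+\tfrac12(\widehat{\bu}^*\otimes\BBeta)+\widehat{p}^*\mathbf{I}$, so that its $\mathcal{R}_h$-orthogonality against the edge jumps of functions in $\BU_h$ can be invoked exactly as in the primal case. Once this identity is in place, every remaining estimate is a verbatim slot-swap of the corresponding primal argument, the domain convexity entering only through the $\H^2$-regularity \eqref{classic_reg} of the auxiliary problem.
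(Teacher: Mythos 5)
Your proposal is correct and follows exactly the route the paper takes: the paper's own ``proof'' of this theorem is simply the remark that it is obtained by repeating the primal arguments with the roles of the two slots of $a(\cdot,\cdot)$ interchanged and the sign of $b(\cdot,\cdot)$ flipped, invoking Theorem~\ref{th:regularidadfuente_dual} for regularity. Your outline is in fact more explicit than the paper, correctly identifying the sign changes in the adjoint consistency tensor and the fact that the duality problem for the $\BL^2$ bound is now of primal (rather than adjoint) type.
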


As a  consequence of the previous results is that, according to the  theory of  \cite{MR0203473}, we are in a position   to conclude that  our numerical method does not introduce spurious eigenvalues. This is stated in the following theorem.
\begin{theorem}
	\label{thm:spurious_free}
	Let $V\subset\mathbb{C}$ be an open set containing $\sp(\bT)$. Then, there exists $h_0>0$ such that $\sp(\bT_h)\subset V$ for all $h<h_0$.
\end{theorem}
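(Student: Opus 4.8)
The plan is to obtain Theorem~\ref{thm:spurious_free} from the convergence in operator norm of $\bT_h$ to $\bT$ on $\BL^2(\O,\mathbb{C})$, combined with the classical resolvent perturbation argument; this is exactly the mechanism behind the spurious-free criterion of \cite{MR0203473}. Recall that $\bT$ is compact on $\BL^2(\O,\mathbb{C})$, so $\sp(\bT)$ is bounded (by $\|\bT\|_{\mathcal{L}(\BL^2)}$) and $0\in\sp(\bT)$, whence $0\in V$; moreover each $\bT_h$ is of finite rank and thus also compact.

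\textbf{Step 1 (uniform norm convergence).} First I would show $\|\bT-\bT_h\|_{\mathcal{L}(\BL^2)}\to 0$ as $h\to 0$. For $\boldsymbol{f}\in\BL^2(\O,\mathbb{C})$ with $\widehat{\bu}=\bT\boldsymbol{f}$ and $\widehat{\bu}_h=\bT_h\boldsymbol{f}$, Lemma~\ref{convergence:T} gives
\begin{equation*}
\|(\bT-\bT_h)\boldsymbol{f}\|_{0,\O}\leq C\,h^{\min\{s,k\}+1}\bigl(|\widehat{\bu}|_{1+s,\O}+|\widehat{p}|_{s,\O}+\|\boldsymbol{f}\|_{s-1,\O}\bigr).
\end{equation*}
Using the a priori regularity bound $|\widehat{\bu}|_{1+s,\O}+|\widehat{p}|_{s,\O}\leq C\|\boldsymbol{f}\|_{0,\O}$ from Theorem~\ref{th:regularidadfuente}, and reducing the Sobolev index to $\bar s:=\min\{s,1\}$ if necessary so that $\|\boldsymbol{f}\|_{\bar s-1,\O}\leq\|\boldsymbol{f}\|_{0,\O}$, one obtains constants $t>0$ and $C>0$ independent of $h$ and $\boldsymbol{f}$ such that $\|(\bT-\bT_h)\boldsymbol{f}\|_{0,\O}\leq C h^{t}\|\boldsymbol{f}\|_{0,\O}$, hence $\|\bT-\bT_h\|_{\mathcal{L}(\BL^2)}\leq C h^{t}\to 0$.

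\textbf{Step 2 (resolvent argument).} Since $\sp(\bT)\subset V$, the resolvent $R(z):=(zI-\bT)^{-1}$ is holomorphic, in particular continuous, on the closed set $\mathbb{C}\setminus V$, and $\|R(z)\|_{\mathcal{L}(\BL^2)}\leq(|z|-\|\bT\|)^{-1}$ once $|z|>\|\bT\|$, so it vanishes at infinity. Splitting $\mathbb{C}\setminus V$ into its bounded part (where $\|R(\cdot)\|$ is continuous on a compact set) and its unbounded part (where the decay bound applies) shows $M:=\sup_{z\in\mathbb{C}\setminus V}\|R(z)\|_{\mathcal{L}(\BL^2)}<\infty$. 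By Step~1 fix $h_0>0$ with $\|\bT-\bT_h\|_{\mathcal{L}(\BL^2)}<1/M$ for all $h<h_0$. Then for every $z\in\mathbb{C}\setminus V$ we have
\begin{equation*}
zI-\bT_h=(zI-\bT)\bigl(I+R(z)(\bT-\bT_h)\bigr),
\end{equation*}
and $\|R(z)(\bT-\bT_h)\|\leq M\|\bT-\bT_h\|<1$, so $I+R(z)(\bT-\bT_h)$ is invertible via a Neumann series; hence $zI-\bT_h$ is invertible, i.e. $z\in\rho(\bT_h)$. As $z\in\mathbb{C}\setminus V$ was arbitrary, $\mathbb{C}\setminus V\subset\rho(\bT_h)$, that is $\sp(\bT_h)\subset V$ for all $h<h_0$, which is the assertion.

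\textbf{Main obstacle.} The only substantive ingredient is Step~1, the \emph{uniform} $\BL^2\to\BL^2$ operator-norm convergence: this is precisely where defining the solution operators on $\BL^2$ (rather than on $\mathbf{H}^1$, as in \cite{danilooseen}) is exploited, and where the a priori estimates of Section~\ref{apriori:error} together with the regularity Theorem~\ref{th:regularidadfuente} — in particular absorbing the load term $\|\boldsymbol{f}\|_{\bar s-1,\O}$ by $\|\boldsymbol{f}\|_{0,\O}$ — are combined. Once Step~1 is in hand, Step~2 is a soft functional-analytic argument that is insensitive to the non-conformity of the method.
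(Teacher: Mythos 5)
Your proof is correct and follows essentially the same route as the paper, which simply invokes the norm convergence of $\bT_h$ to $\bT$ in $\mathcal{L}(\BL^2)$ together with the classical perturbation theory of \cite{MR0203473}; your Step~2 is precisely the resolvent argument behind that citation, and your Step~1 (including the reduction to $\bar s=\min\{s,1\}$ so that the load term $\|\boldsymbol{f}\|_{\bar s-1,\O}$ is controlled by $\|\boldsymbol{f}\|_{0,\O}$) supplies the uniform bound the paper leaves implicit in Lemma~\ref{convergence:T} and Theorem~\ref{th:regularidadfuente}.
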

\section{Spectral approximation and error estimates:}
\label{spectral:approx}
We will obtain convergence and error estimates for the suggested nonconforming VEM discretization for the Oseen eigenvalue problem in this section. More precisely, we shall prove that $\bT_h$ gives a valid spectral approximation of $\bT$ by using the classical theory for compact operators (see \cite{MR1115235}). The equivalent adjoint operators $\bT_h^{\ast}$ and $\bT^{\ast}$ of $\bT_h$ and $\bT$, respectively, will then have a comparable convergence result established.  

Let us recall some preliminary definitions and notations (see \cite[page 683]{MR1115235} for further details). If $\mathbf{J}:X\rightarrow X$ is a compact operator and $X$ is a Banach space, 
the spectrum of $\mathbf{J}$ is defined by $\sp(\mathbf{J}):=\mathbb{C}\setminus\rho(\mathbf{J})$, where $\rho(\mathbf{J})$ corresponds to the resolvent of $\mathbf{J}$. For any $z\in\rho(\mathbf{J})$ we define the resolvent operator associated to $z$ as $\mathbf{R}_z(\mathbf{J}):=(z-\mathbf{J})^{-1}$.

 Let $\mu$ be a non-zero eigenvalue of $\bT$ with algebraic multiplicity $m$. $C$ sets a circle with a centre at $\mu$ in the complex plane, ensuring that no other eigenvalue is contained inside $C$. Furthermore, think about the spectral projections $E$ and $E^{\ast}$ in the manner described below:
\begin{equation*}
\boldsymbol{E}:= (2 \pi i)^{-1} \int_{C} (z-\bT)^{-1} dz \qquad \boldsymbol{E}^{\ast}:= (2 \pi i)^{-1} \int_{C} (z-\bT^{\ast})^{-1} dz,
\end{equation*}
where $\boldsymbol{E}$ and $\boldsymbol{E}^{\ast}$ are projections onto the space of generalized eigenvectors, i.e.,  $R(\boldsymbol{E}):=\ker(\mu-\bT)^\alpha$ and $R(\boldsymbol{E}^{\ast}):=\ker(\mu-\bT^*)^{\alpha}$, respectively, where $R$ denotes de range and  $\alpha$ is the ascent of $\mu-\bT$ and $\mu-\bT^{\ast}$.  Now, it is easy to prove that $R(\boldsymbol{E}),R(\boldsymbol{E}^{\ast}) \in \BH^{r+1}(\O) \times \BH^{r}(\O)$, and  $R(\boldsymbol{E}^{\ast}) \in \BH^{r^*+1}(\O) \times \BH^{r^*}(\O)$ (see Theorem~\ref{th:regularidadfuente} and \ref{th:regularidadfuente_dual}). Next, since $\bT_h$ converges to $\bT$,
it means that there exist $m$ eigenvalues (which lie in $C$) $\mu(1),\ldots,\mu(m)$ of $\bT_h$ (repeated according to their
respective multiplicities) which will converge to $\mu$ as $h$ goes to zero. In the same sense, we introduce the following spectral projector $\boldsymbol{E}_h := (2 \pi i)^{-1} \int_{C} (z-\bT_h)^{-1}dz$, which is a projector onto the invariant subspace $R(\boldsymbol{E}_h)$ of $\bT_h$ spanned by the generalized eigenvectors of $\bT_h$ corresponding to $\mu(1), \ldots, \mu(m)$. We also recall the definition of gap $\widehat{\delta}$ between the closed subspaces  $\boldsymbol{\mathcal{X}}$, and $\boldsymbol{\mathcal{Y}}$ of $\BL^{2}$.
\begin{equation*}
\widehat{\delta}(\boldsymbol{\mathcal{X}},\boldsymbol{\mathcal{Y}}):= \max \{ \delta(\boldsymbol{\mathcal{X}},\boldsymbol{\mathcal{Y}}), \delta(\boldsymbol{\mathcal{Y}}, \boldsymbol{\mathcal{X}}) \},
\end{equation*}
where
\begin{equation*}
\delta(\boldsymbol{\mathcal{X}},\boldsymbol{\mathcal{Y}})= \underset{\bold{x} \in \boldsymbol{\mathcal{X}}; \| \bold{x} \|_{\BL^2}=1 }{\sup} \delta(\bold{x},\boldsymbol{\mathcal{Y}}), \quad \text{with} \; \delta(\boldsymbol{x},\boldsymbol{\mathcal{Y}})= \underset{\bold{y} \in \boldsymbol{\mathcal{Y}}; \| \bold{y} \|_{\BL^2}=1 }{\inf} \|\bold{x}-\bold{y} \|_{\BL^2}.
\end{equation*}
The following error estimates for the approximation of eigenvalues and eigenfunctions hold true.
\begin{theorem}
There exists a strictly positive constant $C$ such that 
\begin{align}
&\widehat{\delta}(R(\boldsymbol{E}),R(\boldsymbol{E}_h)) \leq C h^{\min \{r,k \} +1}; \label{Spectral:1}\\
& |\mu-\widehat{\mu}_h|\leq C h^{\min \{r,k \}+\min \{r^*,k \}} \label{Spectral:2},
\end{align}
where $\widehat{\mu}_h:= \frac{1}{m} \sum_{j=1}^m \mu_h^{j}$, where $r \geq 1$, and $r^* \geq 1$ are the orders of regularity of the eigenfunctions of primal and dual problems.
\end{theorem}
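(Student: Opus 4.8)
The plan is to derive both bounds from the classical spectral approximation theory for compact operators of \cite{MR1115235} (see also \cite{MR0203473}), which applies here because $\bT_h$ converges to $\bT$ in the operator norm of $\BL^2(\O,\mathbb{C})$, abbreviated $\|\cdot\|_{\mathcal{L}}$. Indeed, Lemma~\ref{convergence:T} combined with the regularity bound of Theorem~\ref{th:regularidadfuente} gives $\|(\bT-\bT_h)\boldsymbol{f}\|_{0,\O}\leq Ch^{\min\{s,k\}+1}\|\boldsymbol{f}\|_{0,\O}$ for all $\boldsymbol{f}\in\BL^2(\O,\mathbb{C})$, so $\|\bT-\bT_h\|_{\mathcal{L}}\to 0$; Theorems~\ref{Convergence_dual} and \ref{th:regularidadfuente_dual} give the same for $\bT_h^{\ast}\to\bT^{\ast}$. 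Fixing the nonzero eigenvalue $\mu$ of $\bT$ with algebraic multiplicity $m$ and ascent $\alpha$, together with the circle $C$ from the construction above, convergence in norm then yields (cf.\ \cite[Section~7]{MR1115235}) that $\|\boldsymbol{E}-\boldsymbol{E}_h\|_{\mathcal{L}}\to 0$, that $\dim R(\boldsymbol{E}_h)=m$ for $h$ small enough, and that the $m$ eigenvalues $\mu_h^{1},\dots,\mu_h^{m}$ of $\bT_h$ inside $C$ converge to $\mu$ (Theorem~\ref{thm:spurious_free} being the qualitative counterpart).

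For the subspace estimate \eqref{Spectral:1} I would use the standard gap bound $\widehat{\delta}(R(\boldsymbol{E}),R(\boldsymbol{E}_h))\leq C\,\|(\bT-\bT_h)|_{R(\boldsymbol{E})}\|_{\mathcal{L}}$, valid for $h$ small by convergence in norm. Since $R(\boldsymbol{E})$ is finite dimensional and, as recalled before the statement, $R(\boldsymbol{E})\subset\BH^{1+r}(\O,\mathbb{C})\times\H^{r}(\O,\mathbb{C})$ with $\|\bv\|_{1+r,\O}\leq C\|\bv\|_{0,\O}$ for $\bv\in R(\boldsymbol{E})$, running the argument of Lemma~\ref{convergence:T} with the eigenfunction regularity index $r$ in place of the source index $s$ gives $\|(\bT-\bT_h)\bv\|_{0,\O}\leq Ch^{\min\{r,k\}+1}\|\bv\|_{0,\O}$ for every $\bv\in R(\boldsymbol{E})$, which is exactly \eqref{Spectral:1}.

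For the eigenvalue estimate \eqref{Spectral:2} I would invoke the Osborn identity \cite[Theorem~7.3]{MR1115235} in its non-selfadjoint version. Let $\{\boldsymbol{\varphi}_1,\dots,\boldsymbol{\varphi}_m\}$ be a basis of $R(\boldsymbol{E})$ and $\{\boldsymbol{\varphi}_1^{\ast},\dots,\boldsymbol{\varphi}_m^{\ast}\}$ the dual basis of $R(\boldsymbol{E}^{\ast})$ normalised by $c(\boldsymbol{\varphi}_i,\boldsymbol{\varphi}_j^{\ast})=\delta_{ij}$; then
\begin{multline*}
|\mu-\widehat{\mu}_h|\leq\frac{1}{m}\sum_{j=1}^{m}\bigl|c\bigl((\bT-\bT_h)\boldsymbol{\varphi}_j,\boldsymbol{\varphi}_j^{\ast}\bigr)\bigr|\\
+C\,\bigl\|(\bT-\bT_h)|_{R(\boldsymbol{E})}\bigr\|_{\mathcal{L}}\,\bigl\|(\bT^{\ast}-\bT_h^{\ast})|_{R(\boldsymbol{E}^{\ast})}\bigr\|_{\mathcal{L}}.
\end{multline*}
By the previous paragraph and its dual analogue (Theorems~\ref{Convergence_dual} and \ref{th:regularidadfuente_dual}) the last product is $\mathcal{O}(h^{\min\{r,k\}+\min\{r^{\ast},k\}+2})$, hence of higher order, so everything reduces to showing
\begin{equation}
\label{eq:sketch_key}
\bigl|c\bigl((\bT-\bT_h)\boldsymbol{\varphi}_j,\boldsymbol{\varphi}_j^{\ast}\bigr)\bigr|\leq Ch^{\min\{r,k\}+\min\{r^{\ast},k\}},\qquad j=1,\dots,m.
\end{equation}

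The hard part is \eqref{eq:sketch_key}, and it is here that the nonconformity must be handled carefully. The idea is to repeat the duality argument already carried out for the $\BL^2$-error estimate (the dual problem \eqref{L2:1}--\eqref{L2:4} and the chain \eqref{L2:main:2}--\eqref{L2:diff:con:dis}), but with the dual datum $\widehat{\bu}-\widehat{\bu}_h$ replaced by the eigenfunction $\boldsymbol{\varphi}_j^{\ast}$: writing $\widehat{\bu}:=\bT\boldsymbol{\varphi}_j$, $\widehat{\bu}_h:=\bT_h\boldsymbol{\varphi}_j$ and letting $(\widehat{\bu}^{\ast},\widehat{p}^{\ast})$ solve the adjoint source problem \eqref{def:oseen_system_weak_dual_source} with datum $\boldsymbol{\varphi}_j^{\ast}$ (so $\widehat{\bu}^{\ast}=\bT^{\ast}\boldsymbol{\varphi}_j^{\ast}$), one tests the broken form of \eqref{def:oseen_system_weak_dual_source} with $\widehat{\bu}-\widehat{\bu}_h$ and subtracts the discrete primal problem \eqref{def:oseen_system_sourcediscretproblem} tested with $\mathcal{I}\widehat{\bu}^{\ast}$. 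This produces, just as in the source analysis: interpolation/Galerkin differences such as $a(\widehat{\bu}-\widehat{\bu}_h,\widehat{\bu}^{\ast}-\mathcal{I}\widehat{\bu}^{\ast})$ and $b_h(\widehat{\bu}-\widehat{\bu}_h,\widehat{p}^{\ast}-\mathcal{R}_h\widehat{p}^{\ast})$, bounded by $|\widehat{\bu}-\widehat{\bu}_h|_{1,h}$ times the interpolation error of $(\widehat{\bu}^{\ast},\widehat{p}^{\ast})$; two nonconforming consistency terms of the edge-jump type \eqref{Consistency:Err}, one coming from the primal and one from the dual problem, controlled as in \eqref{Vcrime} and \eqref{L2:Variational_Crime}; and the VEM inconsistency terms $a_h^{K}-a^{K}$ and $c_h^{K}-c^{K}$ evaluated at $(\widehat{\bu}_h,\mathcal{I}\widehat{\bu}^{\ast})$, handled as in \eqref{Diff_Discrete}--\eqref{L2:diff:con:dis}. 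The key difference from the source proof is that the dual solution now enjoys only the regularity $\widehat{\bu}^{\ast}\in\BH^{1+r^{\ast}}(\O,\mathbb{C})$, $\widehat{p}^{\ast}\in\H^{r^{\ast}}(\O,\mathbb{C})$ (Theorem~\ref{th:regularidadfuente_dual}) instead of $\BH^{2}\times\H^{1}$, so each factor that previously carried one power of $h$ now carries $h^{\min\{r^{\ast},k\}}$, whereas the factors attached to $\widehat{\bu}$ carry $h^{\min\{r,k\}}$ by \eqref{Err:H1} and the eigenfunction regularity; multiplying gives \eqref{eq:sketch_key} and hence \eqref{Spectral:2}. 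For generalized eigenfunctions ($\alpha>1$) the same bounds follow from the finite-dimensionality of $R(\boldsymbol{E})$ and the invariance of $R(\boldsymbol{E})$, $R(\boldsymbol{E}_h)$ under $\bT$, $\bT_h$. I expect the main obstacle to be the bookkeeping of this last step: one must verify that the two independent $\mathcal{O}(h)$-type losses stemming from the primal and from the dual nonconforming consistency errors genuinely \emph{multiply}, so that the sharp double order $\min\{r,k\}+\min\{r^{\ast},k\}$ is attained rather than only $\min\{r,k\}+1$.
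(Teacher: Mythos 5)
Your proposal is correct and follows essentially the same route as the paper: estimate \eqref{Spectral:1} via the Babu\v{s}ka--Osborn gap bound combined with the $\BL^2$ operator convergence restricted to $R(\boldsymbol{E})$ using the eigenfunction regularity $r$, and estimate \eqref{Spectral:2} via the Osborn double-order identity, where the key term $\langle(\bT-\bT_h)\bu_k,\bu_k^{\ast}\rangle$ is expanded by inserting $(\mathcal{I}\bu_k^{\ast},\mathcal{R}_h p_k^{\ast})$ and exploiting the continuous/discrete equations, producing exactly the interpolation differences, the two nonconforming consistency terms (handled through the vanishing jump of the exact dual eigenfunction), and the VEM form-inconsistency terms that the paper bounds in \eqref{First_term}--\eqref{VCrime}. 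The only cosmetic differences are the normalization of the dual basis (via $c(\cdot,\cdot)$ rather than the $A$-pairing) and the phrasing of the key-term expansion as a duality argument rather than an $A$-form decomposition, which are algebraically equivalent.
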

\begin{proof}
The estimate~\eqref{Spectral:1} follows from \cite[Theorem~7.1]{MR1115235}, and the fact that $\|\bT_h-\bT\|_{0,\O} \approx O(h^{\min \{r,k\} +1})$ (Lemma~\ref{convergence:T}). In what follows we will prove \eqref{Spectral:2}: assume that $\bT(\bu_j)=\mu \bu_j$, for $j=1,2, \ldots, m$. Let $(\bu_j^{\ast})$ be a  dual basis for $R(E^{\ast})$ satisfying
\begin{equation}
\langle \bu_j,\bu_l^{\ast} \rangle:= A((\bu_j,p),(\bu_l^{\ast},p^{\ast}))=\delta_{jl},
\end{equation}
where $\langle \cdot,\cdot \rangle$ denotes the corresponding duality pairing. Now, from \cite[Theorem~7.2]{MR1115235}, we have that
\begin{equation}
\label{Spectral_both}
|\mu-\widehat{\mu}_h| \leq \frac{1}{m} \sum_{k=1}^m \Big|\langle (\bT-\bT_h)\bu_k,\bu_k^{\ast} \rangle \Big|+ \| (\bT-\bT_h)|_{R(E)} \|_{0,\Omega} \| (\bT^{\ast}-\bT_h^{\ast})|_{R(E)} \|_{0,\Omega},
\end{equation}
where $\langle\cdot,\cdot\rangle$ denotes the corresponding duality pairing. The estimate of the second term of \eqref{Spectral_both} is quite obvious. In this direction, we need bound of $\| (\bT-\bT_h) \|_{0,\Omega}$, and $\| (\bT^{\ast}-\bT_h^{\ast}) \|_{0,\Omega}$ which are achieved from Lemma~\ref{convergence:T}, and Theorem~\ref{Convergence_dual}. However, the estimate of $\langle (\bT-\bT_h)\bu_k,\bu_k^{\ast} \rangle$ is not straightforward, and it needs arguments same as \cite{MR4664360}.
\begin{equation}
\label{main_Spectral}
\begin{split}
\langle(\bT-\bT_h) \bu_k, \bu_k^{\ast} \rangle & =A ((\bT-\bT_h) \bu_k, p_k-p_{k,h}); (\bu_k^{\ast}, p_k^{\ast} )) \\
&= A ((\bT-\bT_h) \bu_k, p_k-p_{k,h}); (\bu_k^{\ast}, p_k^{\ast} )-(\bv_h,\eta_h))  \\
& \quad  + A ((\bT \bu_k, p_k); (\bv_h,\eta_h))-A((\bT_h \bu_k, p_{k,h}); (\bv_h,\eta_h)) \\
& = A ((\bT-\bT_h) \bu_k, p_k-p_{k,h}); (\bu_k^{\ast}, p_k^{\ast} )-(\bv_h,\eta_h)) + c(\bu_k,\bv_h) \\
& +\calN_h((\bT \bu_k,p_k),\bv_h)-A((\bT_h u_k,p_{k,h}),(\bv_h,\eta_h))\\
& + A_h((\bT_h u_k,p_{k,h}),(\bv_h,\eta_h))-c_h(\bu_k,\bv_h).
\end{split}
\end{equation}  
In the above estimate, the consistency error $\calN_h(\cdot,\cdot)$ appears since $\BU_h \not \subset \BH^1(\Omega)$. Now, we proceed to bound the terms appeared in \eqref{main_Spectral}. In \eqref{main_Spectral}, we have mentioned that $(\bv_h,\eta_h) \in \BU_h \times \Q_h$ is any discrete function. However, to achieve optimal rate of convergence of the spectrum, choose $(\bv_h,\eta_h):= (\mathcal{I} \bu_k^{\ast}, \mathcal{R}_h p_k^{\ast})$. Upon employing, the approximation properties of the interpolation operator, we bound the term as follows:
\begin{multline}
A ((\bT-\bT_h) \bu_k, p_k-p_{k,h}), (\bu_k^{\ast}, p_k^{\ast} )-(\CI \bu_k^{\ast}, \mathcal{R}_h p_k^{\ast})) \\
\leq C  \| (\bT-\bT_h) \bu_k \|_{1,h} \| \bu_k^{\ast}-\mathcal{I} \bu_k^{\ast}\|_{1,h} 
+ \|(\bT-\bT_h)\bu_k\|_{1,h} \|p^{\ast}_k-\mathcal{R}_h p_k^{\ast}\|_{0,\Omega} \\
+ \|p_k-p_{k,h}\|_{0,\Omega} \|\bu_k^{\ast}-\mathcal{I} \bu_k^{\ast} \|_{1,h}.
\end{multline}
By employing Lemma~\ref{Int_poly_err}, and spectral convergence of the primal problem, we have 
\begin{multline}
\label{First_term}
A ((\bT-\bT_h) \bu_k, p_k-p_{k,h}), (\bu_k^{\ast}, p_k^{\ast} )-(\CI \bu_k^{\ast}, \mathcal{R}_h p_k^{\ast}))\\
 \leq C h^{\min \{r,k \}+\min \{r^*,k \}} \Big(|\bu_k|_{1+r,\Omega}+|p_k|_{r,\Omega}+|\boldsymbol{f}|_{r-1,\Omega}\Big)  \Big(|\bu^{\ast}_k|_{1+r^*,\Omega}+|p^{\ast}_k|_{r^*,\Omega}\Big).
\end{multline}
By employing the polynomial consistency property of the load term and approximation property of the $\BL^2$ projection operator, we have
\begin{equation}
\label{diff_load}
\begin{split}
c(\bu_k,\bv_h)-c_h(\bu_k,\bv_h)&=\sum_{K \in \CT_h} c^K(\bu_k-\bu_{k,\pi},\CI \bu_k-\bu_{k,\pi}) \\
& \quad +c^K_h(\bu_k-\bu_{k,\pi},\CI \bu_k-\bu_{k,\pi}) \leq C h^{2\min \{r,k \}+2} |\bu_k|_{1+r,\Omega}.
\end{split}
\end{equation}
The difference between continuous and discrete forms can be bounded as follows 
\begin{equation}
\label{conti_dis}
\begin{split}
& A_h((\bT_h \bu_k,p_{k,h}),(\CI \bu_k^{\ast}, \mathcal{R}_h p_k^{\ast}))-A((\bT_h \bu_k,p_{k,h}),(\CI \bu_k^{\ast}, \mathcal{R}_h p_k^{\ast})) \\
& \leq C h^{\min \{r,k \}+\min \{r^*,k \}} \Big (|\bu_k|_{1+r,\Omega}+|p_k|_{r,\Omega}+|\boldsymbol{f}|_{r-1,\Omega} \Big ) \Big(|\bu^{\ast}_k|_{1+r^*,\Omega}+|p^{\ast}_k|_{r^*,\Omega} \Big).
\end{split}
\end{equation}
In the above estimate, we have added and subtracted $\BPIZ \bT_h \bu_k$, and applied the approximation properties of the interpolation operator.  Now, we are in a situation to bound the variational crime associated with the formulation. Recollecting $\calN_h((\mu \bu_k, p_k), \bu_k^{\ast})=0$, we rewrite the term as follows:
\begin{equation}
\begin{split}
\label{VCrime}
\calN_h((\mu \bu_k, p_k),\CI \bu_k^{\ast}) &=\calN_h((\mu \bu_k, p_k), \CI \bu_k^{\ast}-\bu_k^{\ast}) \\
& \leq C h^{\min \{r,k \}} \Big(|\bu_k|_{1+r,\Omega}+|p_k|_{r,\Omega} \Big) 
  \Big ( |\CI \bu_k^{\ast}-\bu_k^{\ast}|_{1,h} \Big) \\
& \leq C h^{\min \{r,k \}+\min \{r^*,k \}} \Big(|\bu_k|_{1+r,\Omega}+|p_k|_{r,\Omega} \Big) |\bu^{\ast}_k|_{1+r^*,\Omega}.
\end{split}
\end{equation}
Upon inserting \eqref{First_term}, \eqref{diff_load}, \eqref{conti_dis}, and \eqref{VCrime} into \eqref{main_Spectral}, we obtain an estimate for the term \mbox{$\langle(\bT-\bT_h) \bu_k, \bu_k^{\ast} \rangle $}, and consequently double order convergence of the spectrum, i.e., \eqref{Spectral_both}. 
\end{proof}

\section{Numerical experiments}
\label{sec:numerical-experiments}
We end our paper reporting some numerical tests to illustrate the performance of our method. The implementation of the method has been developed in a Matlab code for the lowest order case $k=1$. The goal is to assess the performance of the method on different domains and of course, study the presence of spurious eigenvalues.  After computing the eigenvalues,  the rates of convergence  are calculated by using a least-square fitting. More precisely, if $\lambda_h$ is a discrete complex eigenvalue, then the rate of convergence $\alpha$ is calculated by extrapolation with the least square  fitting
\begin{equation}
\label{eq:fitting}
\lambda_{h}\approx \lambda_{\text{extr}} + Ch^{\alpha},
\end{equation}
where $\lambda_{\text{extr}}$ is the extrapolated eigenvalue given by the fitting. 

For the tests we consider the following families of polygonal meshes which satisfy the assumptions  \textbf{A1} and  \textbf{A2}  (see Figure \ref{fig:mesh}):
\begin{itemize}
\item $\CT_h^1$: trapezoidal meshes;
\item $\CT_h^2$: squares meshes;
\item $\CT_h^3$: structured hexagonal meshes made of convex hexagons; 
\item$\CT_h^4$: non-structured Voronoi meshes.
\end{itemize}

From now and on, the refinement parameter $N$, used to label each mesh, represents the number of elements intersecting each edge and its relation with the meshsize is $h=1/N$.
\begin{figure}[h!]
\begin{center}
\begin{minipage}{5.2cm}
\centering\includegraphics[height=5.1cm, width=5.1cm]{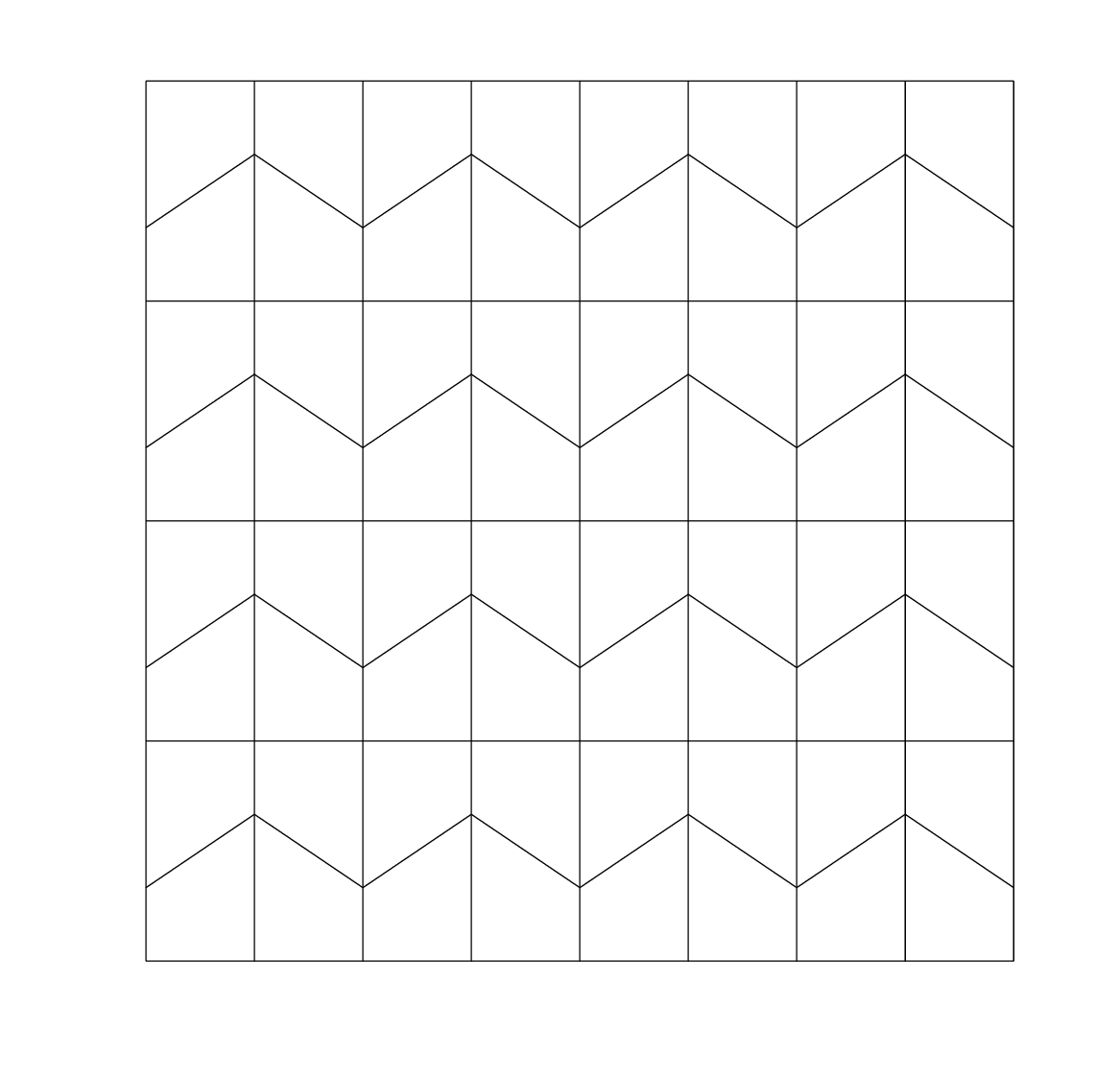}
\end{minipage}
\begin{minipage}{5.2cm}
\centering\includegraphics[height=5.1cm, width=5.1cm]{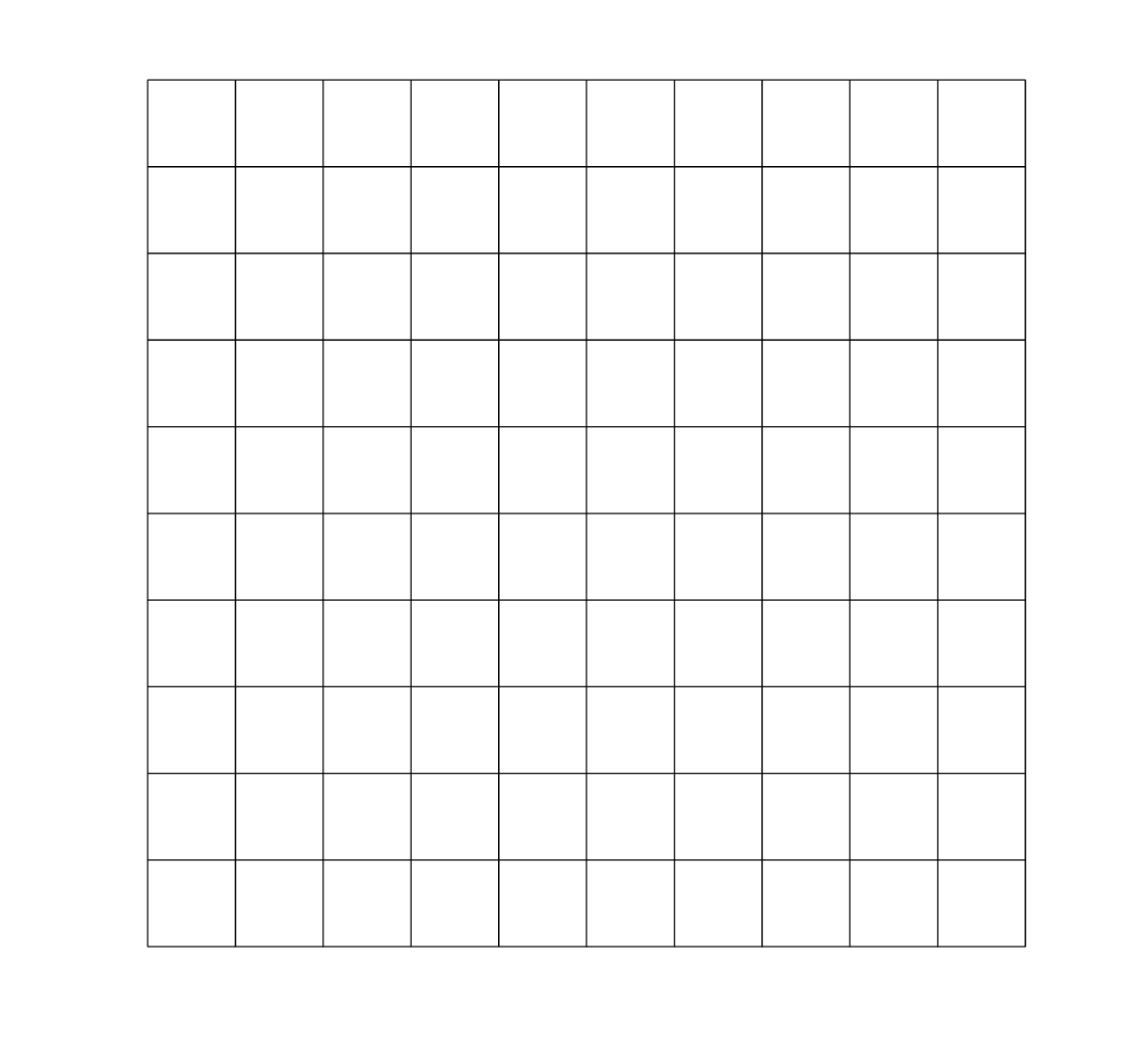}
\end{minipage}\\
\begin{minipage}{5.2cm}
\centering\includegraphics[height=5.1cm, width=5.1cm]{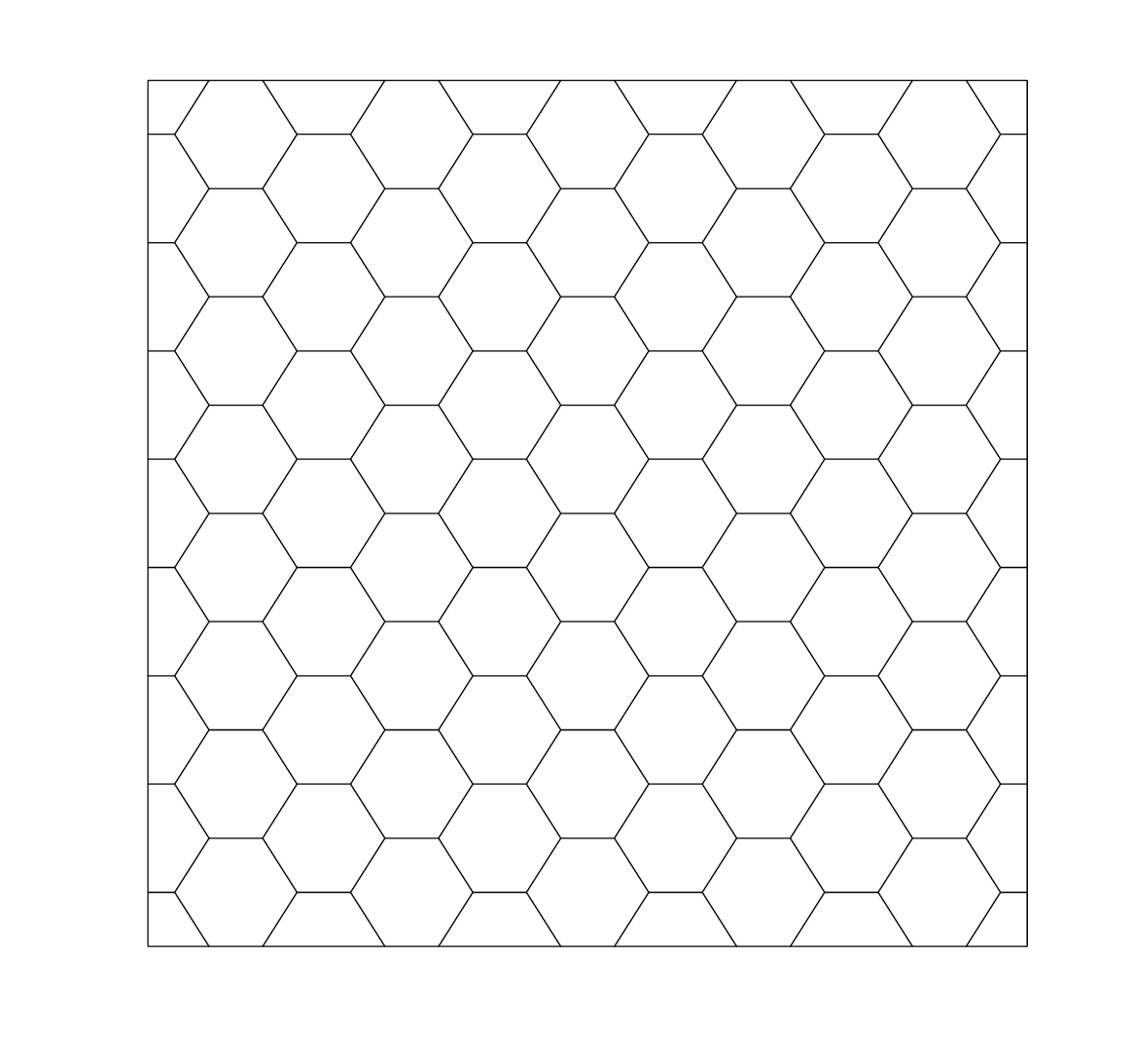}
\end{minipage}
\begin{minipage}{5.2cm}
\centering\includegraphics[height=5.1cm, width=5.1cm]{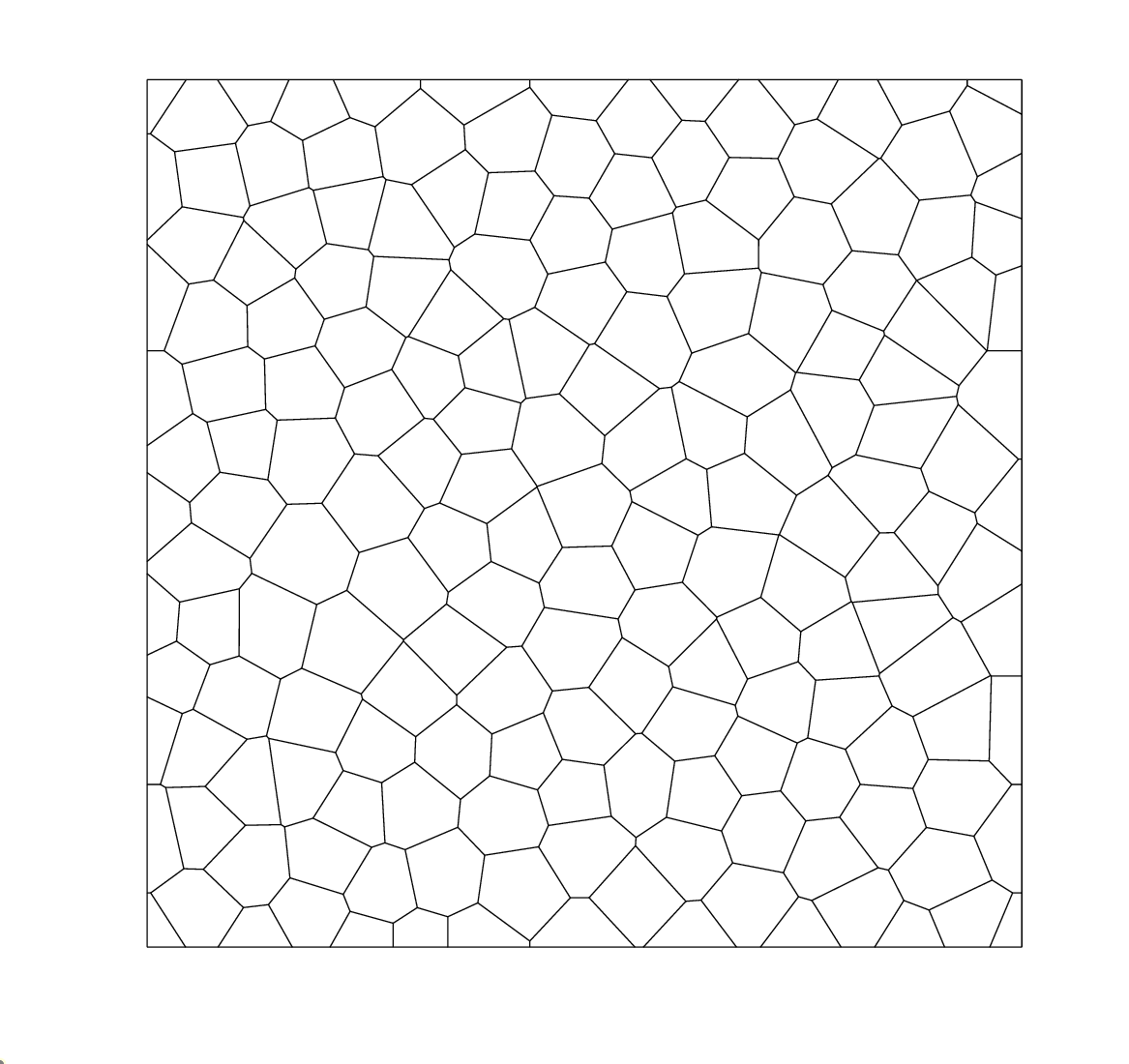}
\end{minipage}
\caption{\label{fig:mesh} Sample meshes: $\CT_h^1$ (top left), $\CT_h^2$ (top right), $\CT_h^3$ (bottom  left), $\CT_h^4$ (bottom right) for $N=8$ and 10.}
\end{center}
\end{figure}
\subsection{Test 1: a square domain}
In this first test, we have taken $\Omega=(-1,1)^2$, $\boldsymbol{\beta}=(1,0)^{\texttt{t}}$. On this type of domain, the eigenfunctions are sufficiently smooth due to the convexity of the square and the null boundary conditions. Hence, an optimal order of convergence is expected with our method. For this test we consider the meshes reported in Figure \ref{fig:mesh}. The results are contained in Table \ref{tabla1} where in the column "Order" we report the computed order of convergence for the eigenvalues, which has been obtained with the  least square fitting \eqref{eq:fitting},
together with extrapolated values that we report on the column  "Extr."
 \begin{table}[h]
\caption{ The lowest computed eigenvalues $\l_{h,i}$, $1\leq i\leq 4$ on different meshes.}
\label{tabla4}
\begin{center}
\begin{tabular}{|c|c|c|c|c|c|c|c|c|} \hline
$\mathcal{T}_h$ &$\l_{h,i}$ & $N = 16$ & $N = 32$& $N = 64$ & $N = 128$  &Order & Extr. &\cite{lepe2023finite}\\ \hline 
 & $\l_{1,h}$ & 13.5455  & 13.5931 &  13.6054 &  13.6085  &  1.95   &13.6097&13.6096\\
$\mathcal{T}_h^1$&$\l_{2,h}$ &   22.9603&   23.0917 &  23.1204  & 23.1274   & 2.17 &  23.1291&23.1297\\
&$\l_{3,h}$  &  23.2729  & 23.3893 &  23.4147 &  23.4209  &  2.17 &  23.4223& 23.4230\\
&$\l_{4,h}$ &   31.7714 &  32.1695  & 32.2658 &  32.2900  &  2.04 &  32.2973&32.2981\\\hline 
&$\l_{h,1}$ &13.5670   &13.5990  & 13.6069  & 13.6089   & 2.00 &  13.6096&13.6096\\
$\mathcal{T}_h^2$&$\l_{h,2}$  & 22.9501  & 23.0917  & 23.1206  & 23.1275 &   2.26&   23.1289&23.1297\\
&$\l_{h,3}$  & 23.2825 &  23.3948 &  23.4163  & 23.4213 &   2.35&   23.4221& 234230\\
&$\l_{h,4}$   &31.8671 &  32.1979 &  32.2735  & 32.2920 &   2.11&   32.2971&32.2981\\\hline
&$\l_{h,1}$ &13.6980  & 13.6318  & 13.6151  & 13.6110   & 1.99 &  13.6095& 13.6096\\
$\mathcal{T}_h^3$&$\l_{h,2}$ &   23.3644  & 23.1976 &  23.1472  & 23.1341   & 1.77   &23.1277& 23.1297\\
&$\l_{h,3}$ &   23.7112 &  23.4960&   23.4411 &  23.4275  &  1.98  & 23.4227&23.4230\\
&$\l_{h,4}$ &   32.8415&   32.4460  & 32.3354 &  32.3074  &  1.86  & 32.2951&32.2981\\\hline
&$\l_{h,1}$ &13.6935  & 13.6276  & 13.6135   & 13.6106 &2.23 &  13.6097&13.6096\\
$\mathcal{T}_h^4$&$\l_{h,2}$ &   23.3782  & 23.1945   &23.1443   &23.1334  &1.92  & 23.1280&23.1297\\
&$\l_{h,3}$ &   23.6837 &  23.4885&   23.4379  & 23.4268 & 1.98&   23.4219&23.4230\\
&$\l_{h,4}$ &   32.7775 &  32.4220  & 32.3255 & 32.3051  & 1.94  & 32.2951&32.2981\\\hline
  \end{tabular}
\end{center}
\label{tabla1}
\end{table}
\begin{figure}[h]
	\begin{center}
\centering\includegraphics[height=5.1cm, width=5.1cm]{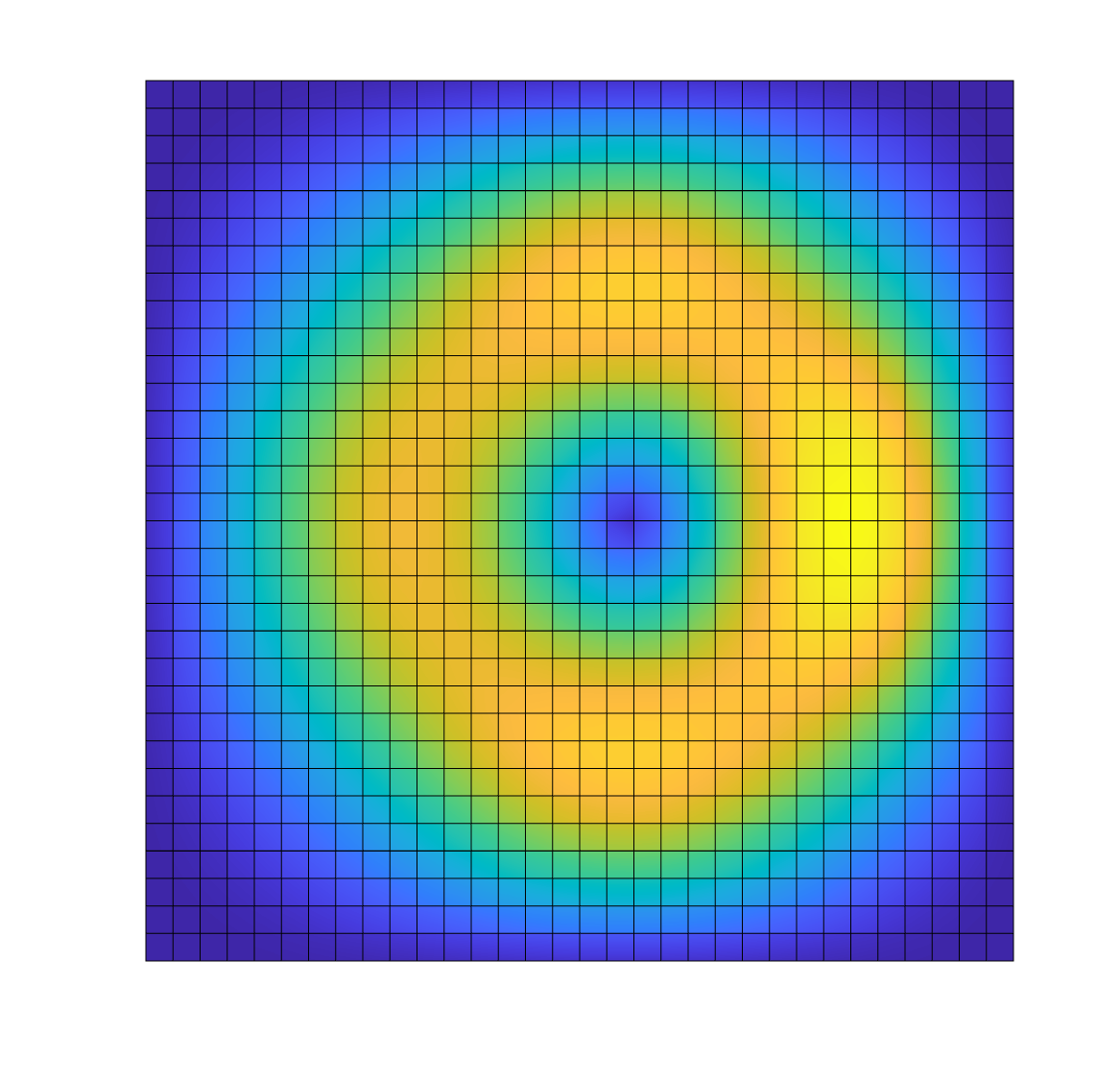}
			\centering\includegraphics[height=5.1cm, width=5.1cm]{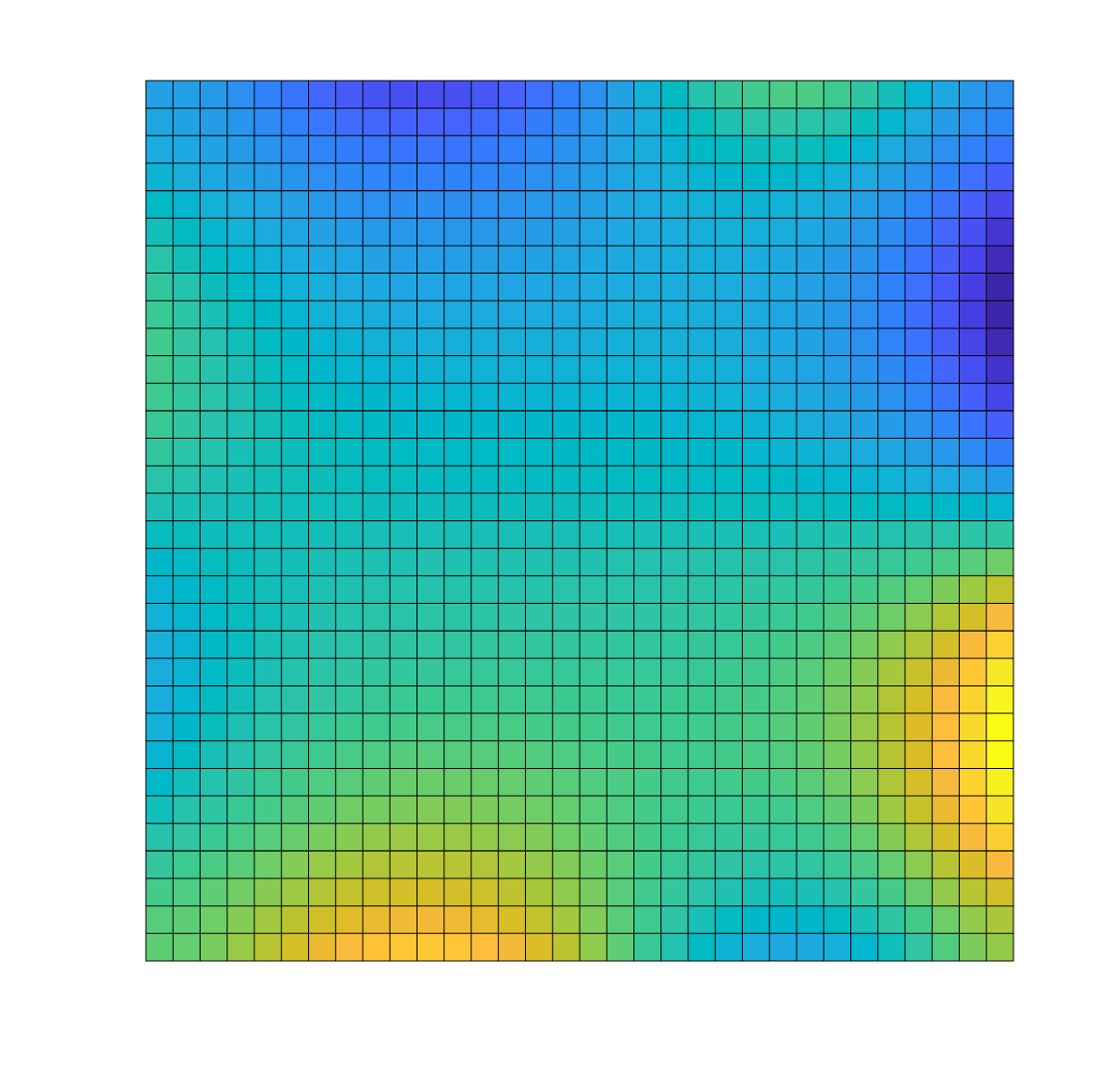}
			\centering\includegraphics[height=5.1cm, width=5.1cm]{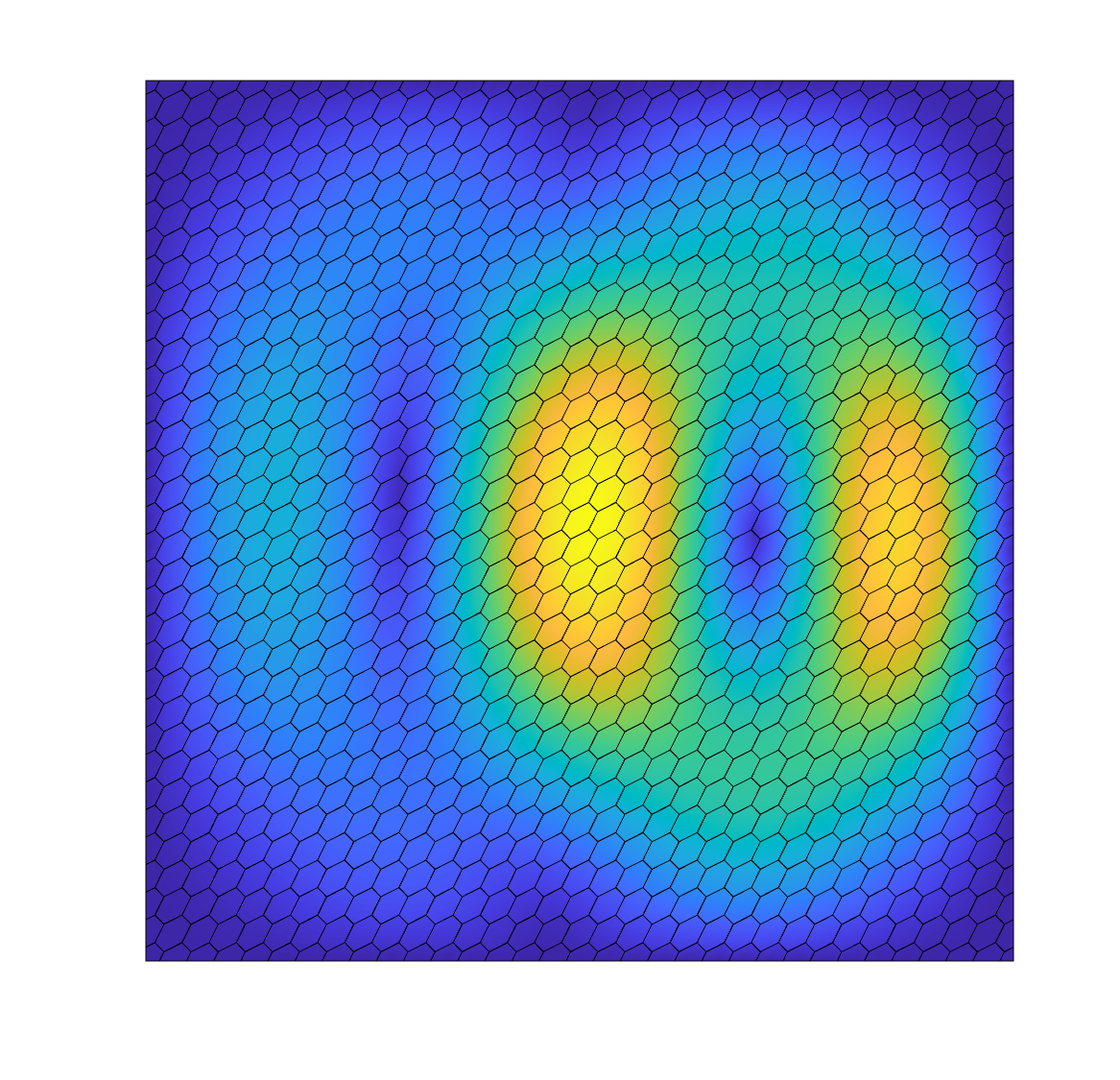}
			\centering\includegraphics[height=5.1cm, width=5.1cm]{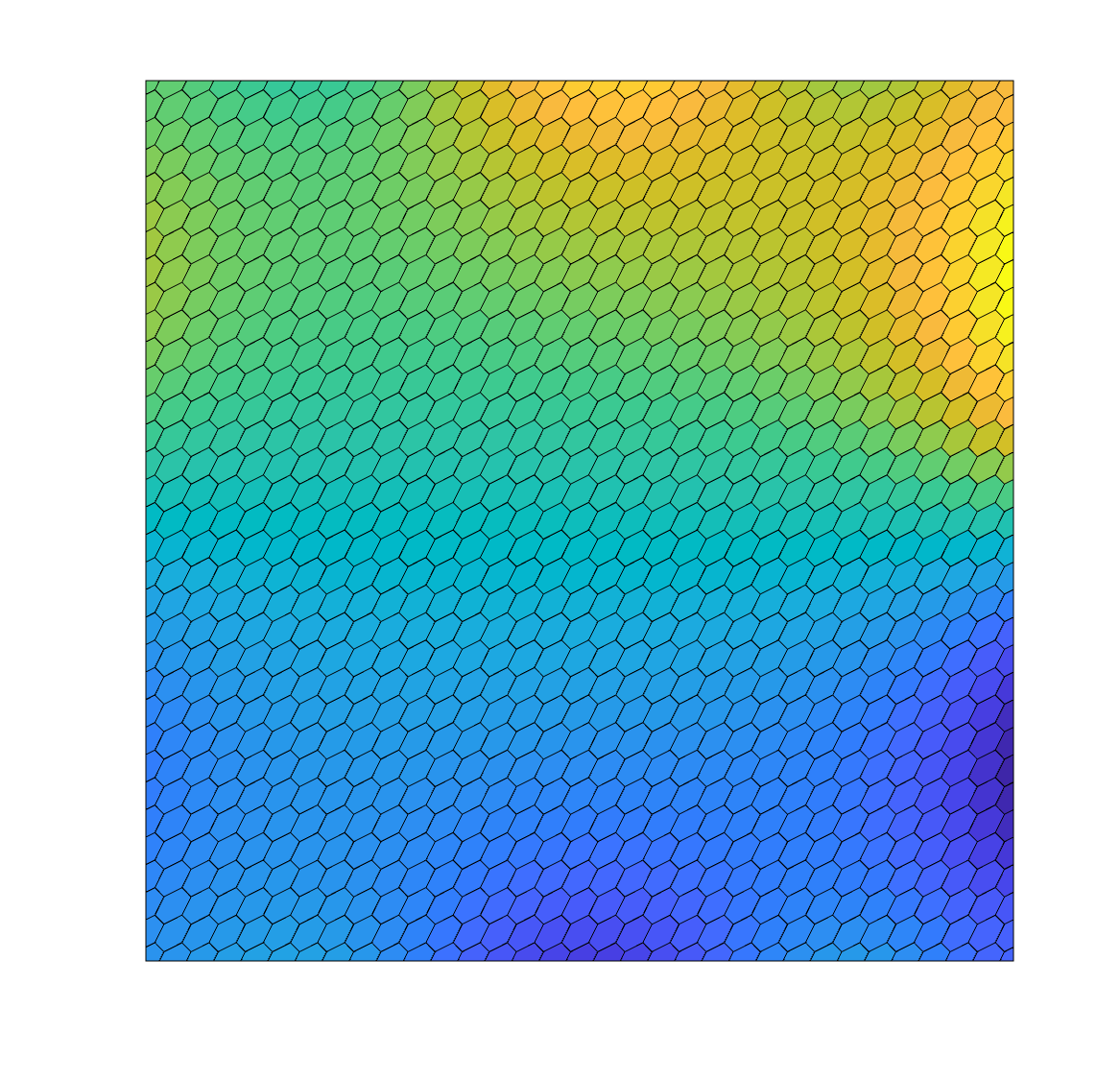}
			\centering\includegraphics[height=5.1cm, width=5.1cm]{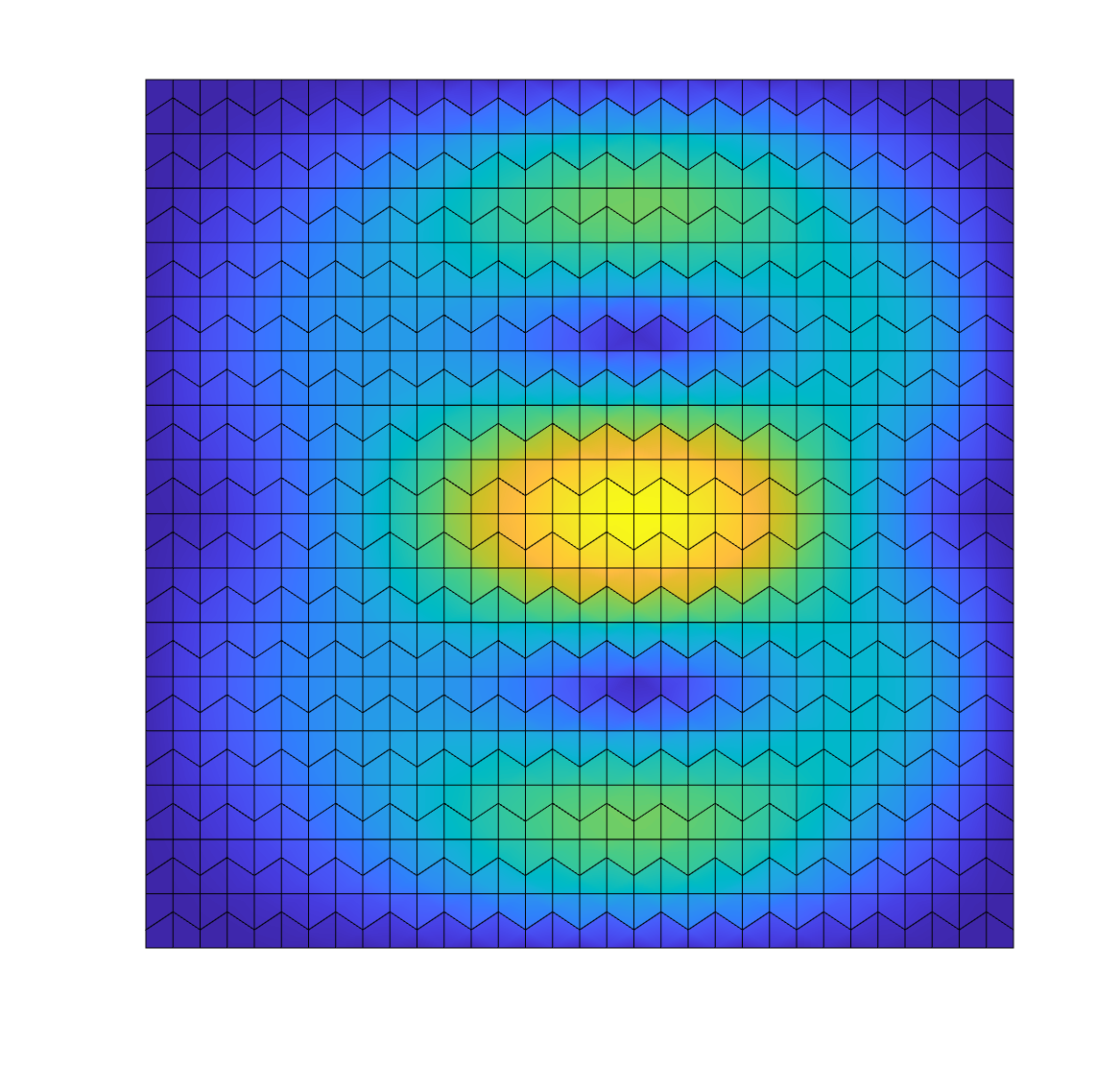}
			\centering\includegraphics[height=5.1cm, width=5.1cm]{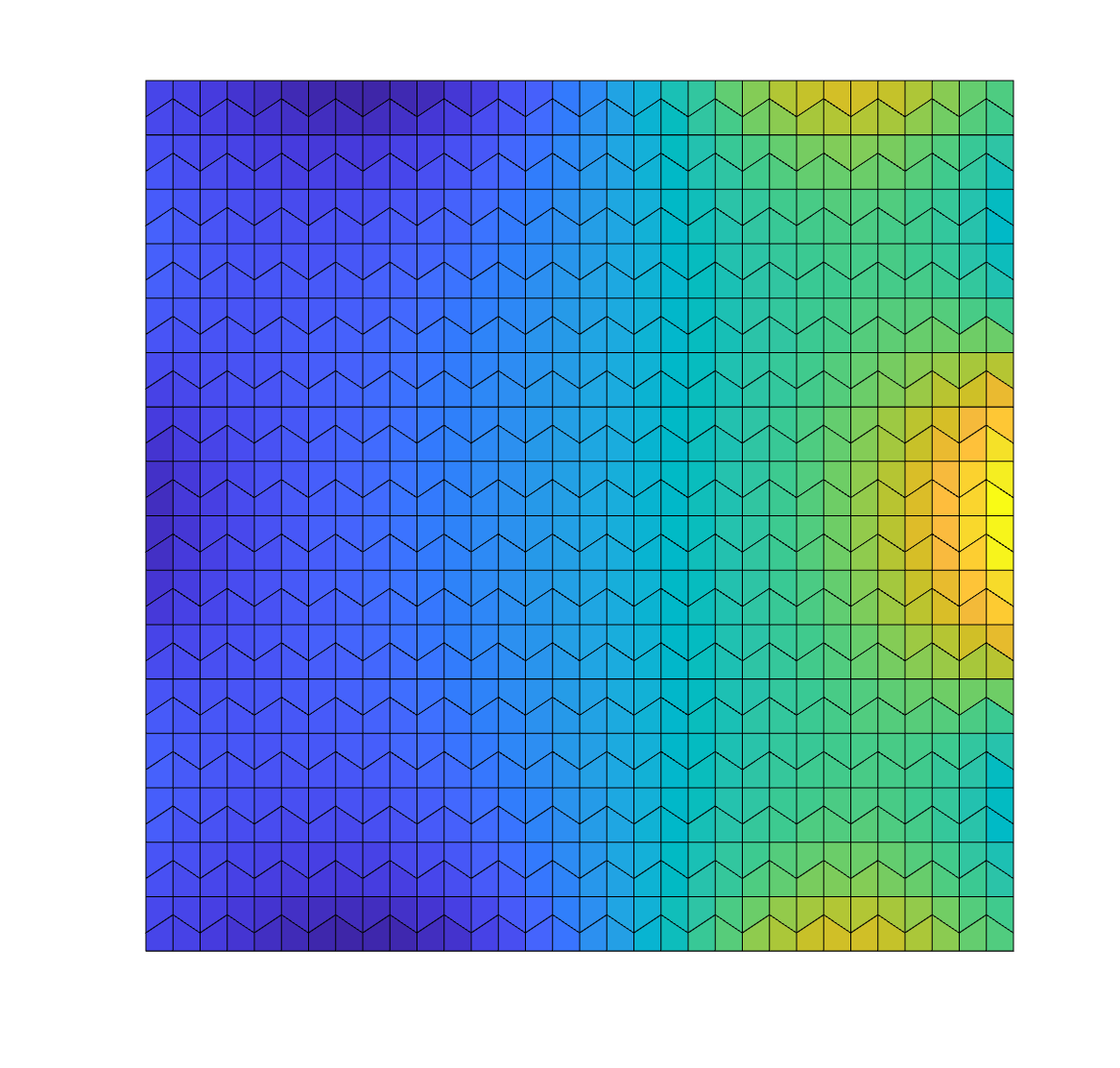}
		\caption{First, second and third magnitude of the eigenfunctions in the square together with the associated pressures: first column $u_{1,h}$, $u_{2,h}$ and $u_{3,h}$ ;second column: $p_{1,h}$, $p_{2,h}$ and $p_{3,h}$; for different family of meshes.}
		\label{fig:p&u_L}
	\end{center}
\end{figure}

\subsection{Test case 2: $\L$ shaped domain} In this example, we consider the non-convex L-shaped domain defined as $\Omega_{L}:=(-1,1) \times (-1,1) \setminus[-1,0] \times [-1,0]$(Figure~\ref{fig:mesh:L}). The eigenfunctions have singularity at $(0,0)$ therefore the convergence order of the corresponding eigenvalues is not optimal. According to the regularity of the eigenfunctions, the rate of convergence  $r$ for the eigenvalues is such that  $1.7\leq r\leq 2$. In Table~\ref{table_Lshp}, we display the results for the model problem. In Figures~\ref{fig:L:shp}, we have dissected the first three discrete velocity and pressure fields. Table~\ref{table_Lshp}'s results demonstrate that the approach provides the anticipated convergence behavior in the eigenvalue approximation. Because of the geometrical singularity of the re-entrant angle, the eigenfunction associated with the first eigenvalue is not sufficiently smooth when compared to the eigenfunctions of the other eigenvalues. The order of convergence for the first computed eigenvalue reflects this fact. 

\begin{figure}[h!]
\begin{center}
\begin{minipage}{5.2cm}
\centering\includegraphics[height=5.1cm, width=5.1cm]{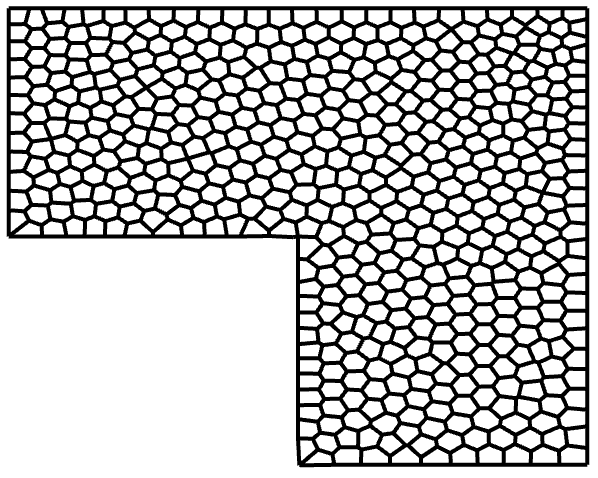}
\end{minipage}
\begin{minipage}{5.2cm}
\centering\includegraphics[height=5.1cm, width=5.1cm]{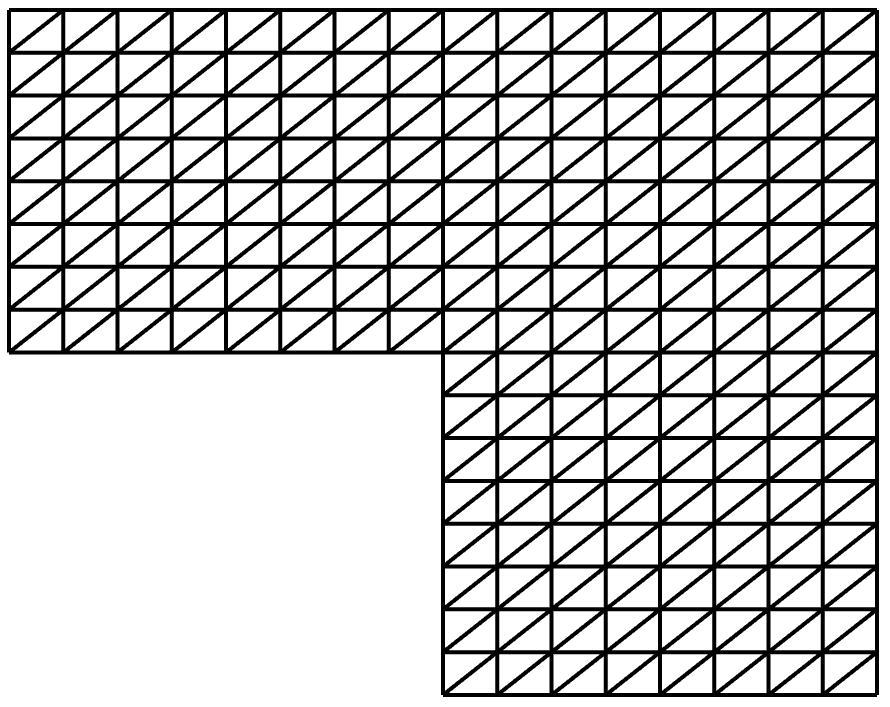}
\end{minipage}
\caption{\label{fig:mesh:L} Sample meshes: $\CT_h^5$ ( left panel), $\CT_h^6$ ( right panel) for $N=8$ }
\end{center}
\end{figure}

\begin{table}[h]
\caption{ The lowest computed eigenvalues $\l_{h,i}$, $1\leq i\leq 4$ on different meshes.}
\label{table_Lshp}
\begin{center}
\begin{tabular}{|c|c|c|c|c|c|c|c|c|} \hline
$\mathcal{T}_h$ &$\l_{h,i}$ & $N = 16$ & $N = 32$& $N = 64$ & $N = 128$  &Order & Extr. &\cite{lepe2023finite}\\ \hline 
 & $\l_{1,h}$ & 31.6764  & 32.5080 &  32.8513 &  32.8855  &  1.65   &32.8949&33.0306\\
$\mathcal{T}_h^5$&$\l_{2,h}$ &   36.6099&   36.9845 &  37.0997  & 37.1058   & 2.02 &  37.1073 &37.1106\\
&$\l_{3,h}$  &  41.8939  & 42.2468 &  42.3768 &  42.3878 &  1.79 &  42.3901& 42.4023\\
&$\l_{4,h}$ &   48.7401 &  49.1200  & 49.2219 &  49.2247  &  2.19 &  49.2264&49.2552\\\hline 
&$\l_{h,1}$ &31.2535  &32.3647  & 32.7931 & 32.8151 & 1.76 &  32.8303&33.0306\\
$\mathcal{T}_h^6$&$\l_{h,2}$  & 36.1669  & 36.8918 & 37.0938  & 37.1058 &   2.13&   37.1066&37.1106\\
&$\l_{h,3}$  & 41.8756 &  42.2558 &  42.3880 & 42.3978 &   1.86&   42.4000& 42.4023\\
&$\l_{h,4}$   &49.4014 &  49.2980 &  49.2609 & 49.2577 &   1.82&   49.2572&49.2552\\\hline
  \end{tabular}
\end{center}
\end{table}

\begin{figure}[h]
	\begin{center}
\centering\includegraphics[height=5.1cm, width=5.1cm]{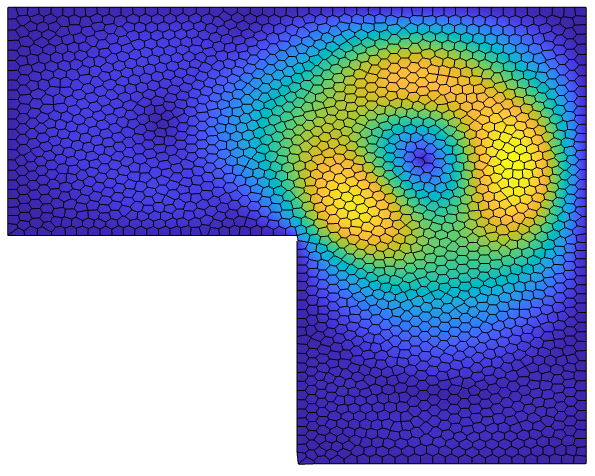}
			\centering\includegraphics[height=5.1cm, width=5.1cm]{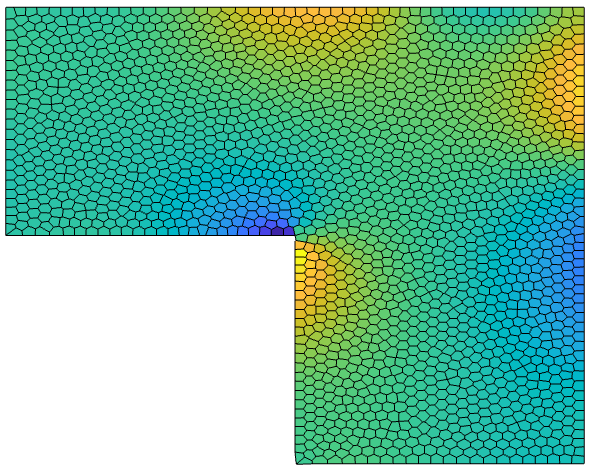}
			\centering\includegraphics[height=5.1cm, width=5.1cm]{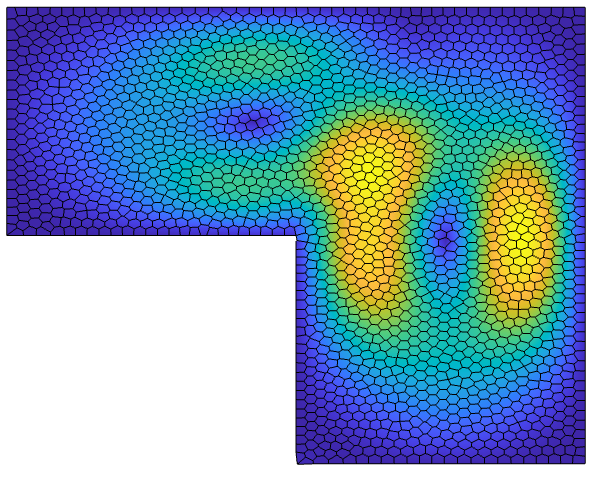}
			\centering\includegraphics[height=5.1cm, width=5.1cm]{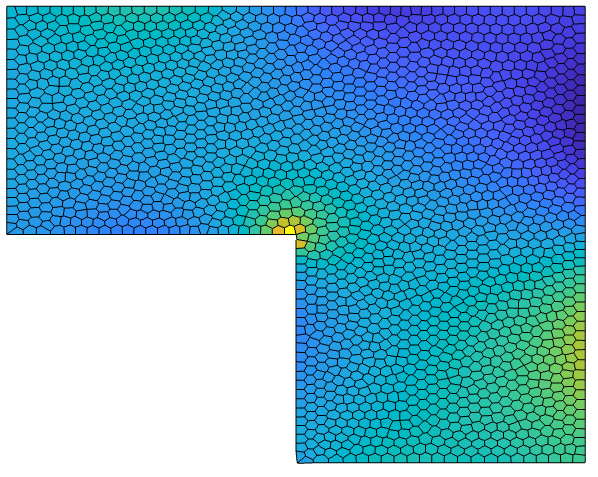}
			\centering\includegraphics[height=5.1cm, width=5.1cm]{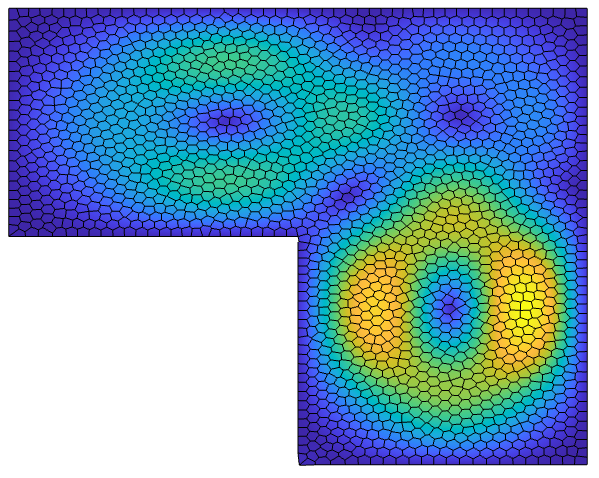}
			\centering\includegraphics[height=5.1cm, width=5.1cm]{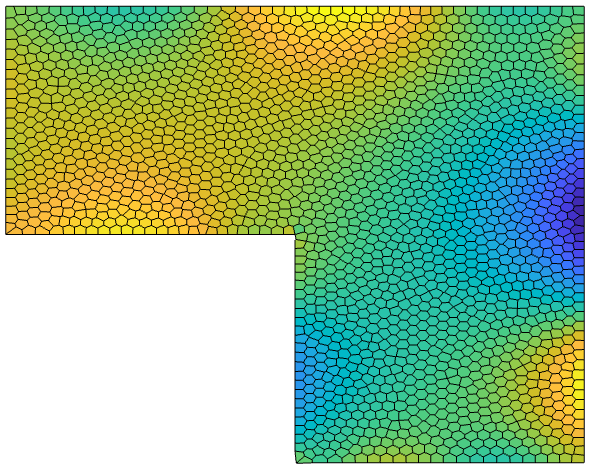}
		\caption{First, second, and third magnitude of the eigenfunctions in the nonconvex $L$ domain together with the associated pressures: first column $u_{1,h}$, $u_{2,h}$ and $u_{3,h}$ ;second column: $p_{1,h}$, $p_{2,h}$ and $p_{3,h}$; for different family of meshes.}
		\label{fig:L:shp}
	\end{center}
\end{figure}
\subsection{Spurious analysis}
The aim of this test is to analyze numerically the influence of the stabilization parameter on the computation of the spectrum. It is well know that if this parameter is not correctly chosen, 
may appear spurious eigenvalues. We refer to \cite{MR4253143,MR4229296,MR4050542} where the VEM reports this phenomenon. 
It is widely recognized that certain domain configurations, specifically dependent on convexity and boundary conditions, can lead to the emergence of spurious eigenvalues when stabilized methods are used, in contrast to implementations in domains with null Dirichlet boundary conditions. We refer to the reader to \cite{MR4077220,MR4229296} where this is discussed. Hence, for this experiment we consider the following problem: Given a domain $\O\subset\mathbb{R}^2$, let us assume that its boundary $\partial\O$ is such that $\partial\O:=\Gamma_D\cup\Gamma_N$ where $|\Gamma_D|>0$. 
\begin{equation}\label{def:oseen-eigenvalue_mixed_boundary}
\left\{
\begin{array}{rcll}
-\nu\Delta \bu + (\boldsymbol{\beta}\cdot\nabla)\bu + \nabla p&=&\lambda\bu&\text{in}\,\O,\\
\div \bu&=&0&\text{in}\,\O,\\
\bu &=&\boldsymbol{0}&\text{on}\,\Gamma_D,\\
(\nu\nabla\bu-p\boldsymbol{I})\cdot\boldsymbol{n} &=&\boldsymbol{0}&\text{on}\,\Gamma_N,
\end{array}
\right.
\end{equation}
where $\boldsymbol{I}\in\mathbb{C}^{d\times d}$ is the identity matrix. Clearly from \eqref{def:oseen-eigenvalue_mixed_boundary} a part of the boundary $\partial\Omega$ changes from Dirichlet to Neumann leading to a different configuration from problem \eqref{def:oseen-eigenvalue} and hence, the stabilization term may introduce spurious eigenvalues that cannot being observed on a clamped domain. Let us remark that the strategy to identify if an eigenvalue is spurious or not, is to check the shape of the associated eigenfunction. In particular, for the computational tests we have considered $\O:=(0,1)^2$ and $\boldsymbol{\beta}:=(1,0)^{\texttt{t}}$ as convective term.


\begin{figure}[h]
	\begin{center}
			\centering\includegraphics[height=5.1cm, width=5.1cm]{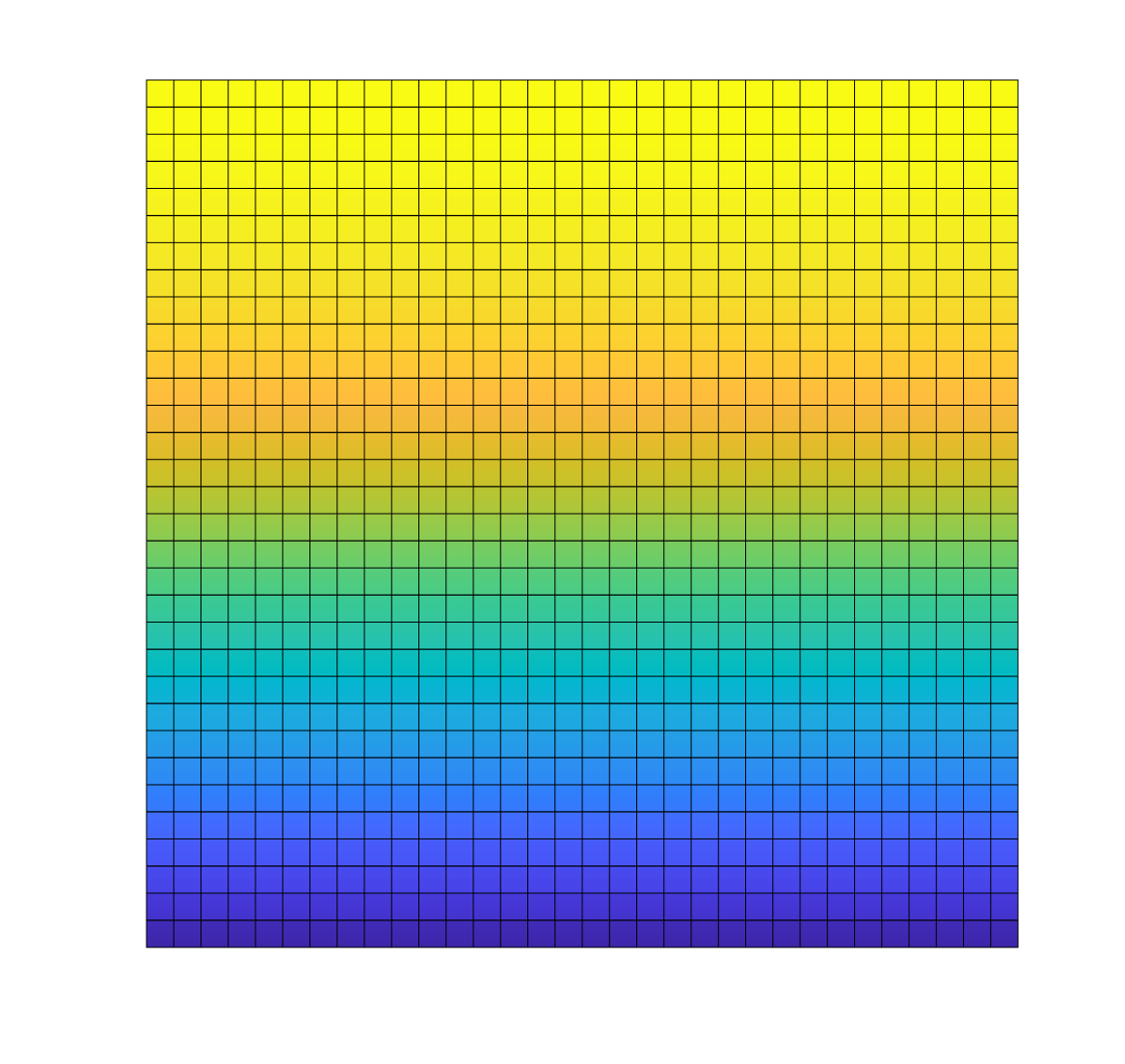}
			\centering\includegraphics[height=5.1cm, width=5.1cm]{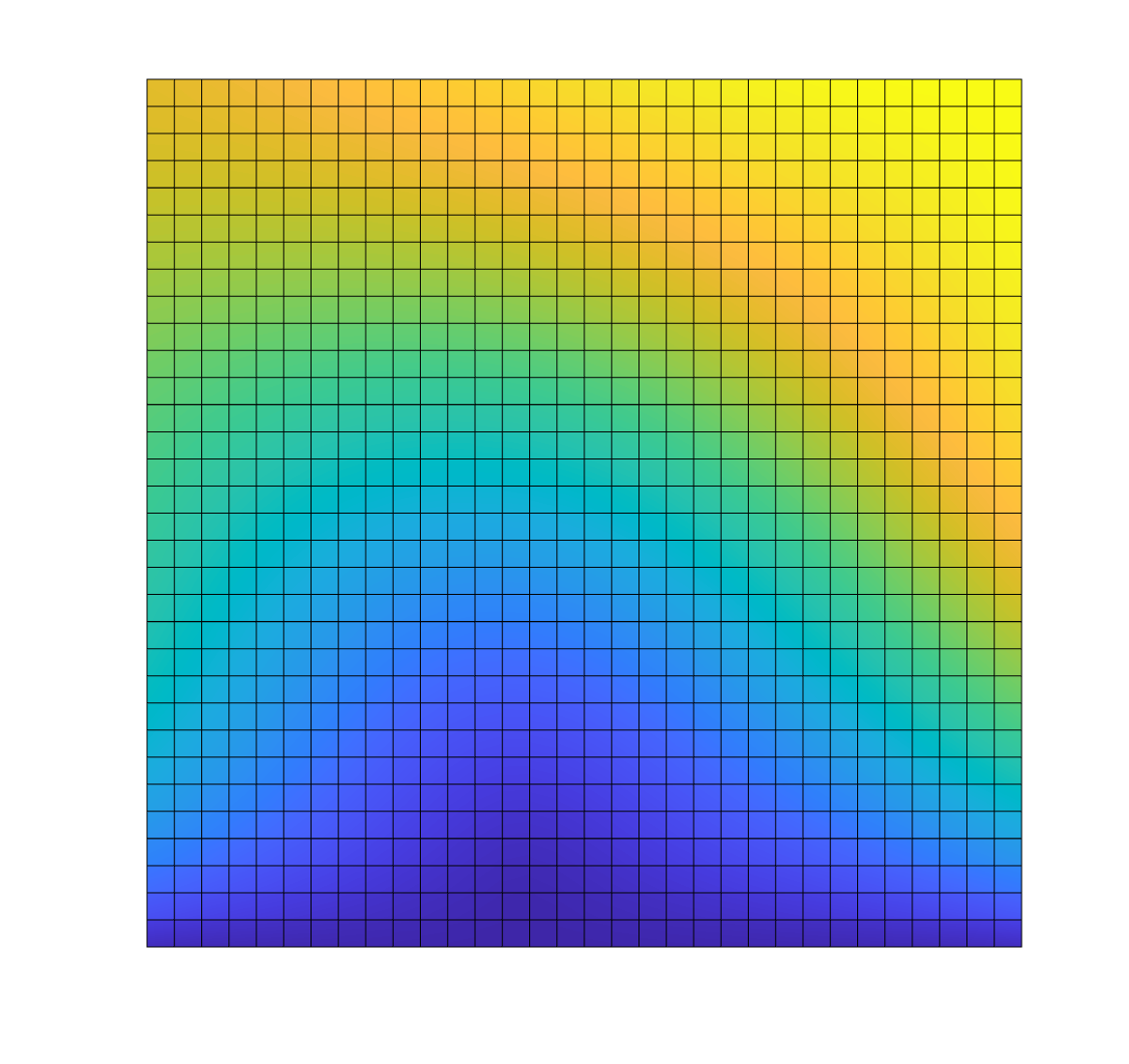}
			\centering\includegraphics[height=5.1cm, width=5.1cm]{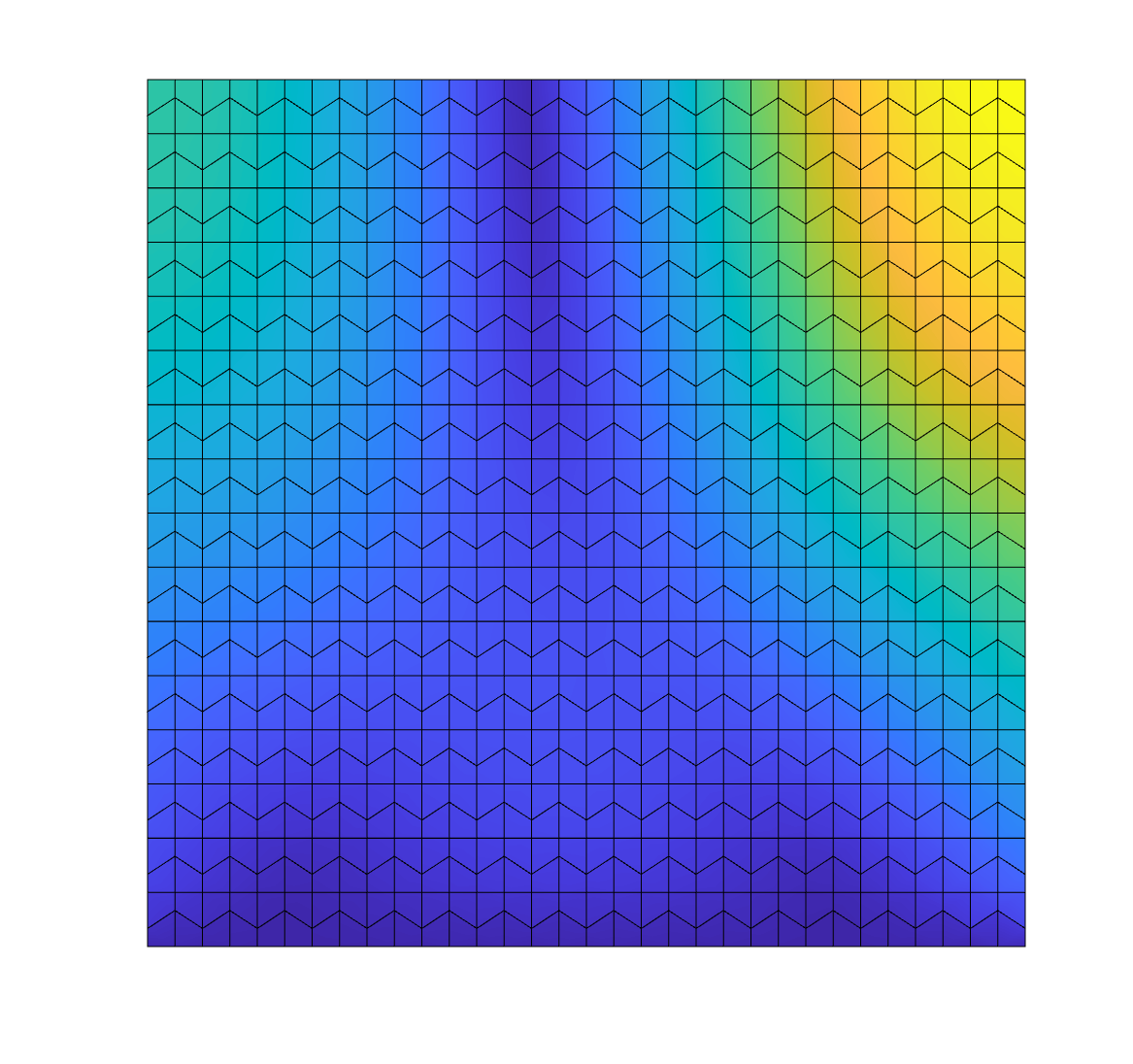}
			\centering\includegraphics[height=5.1cm, width=5.1cm]{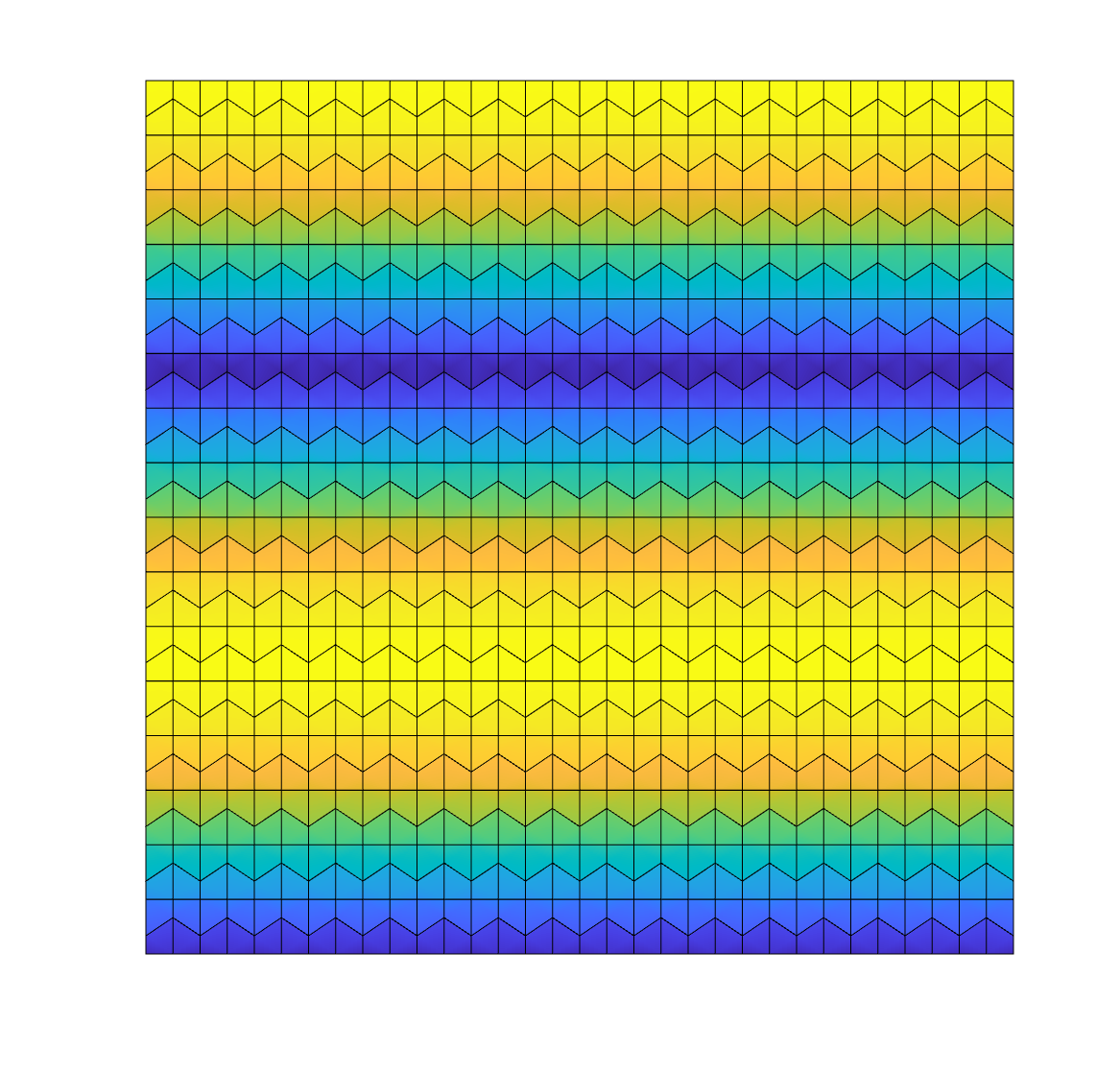}
		\caption{First, second, and third magnitude of the eigenfunctions with $N=32$, for different family of meshes.}
		\label{fig:spuroustest}
	\end{center}
\end{figure}

In Tables \ref{tablaespureos1} and \ref{tablaespureos2} we report the computed results for quadrilateral and Voronoi meshes, respectively.
From Table \ref{tablaespureos1} we observe that when the stabilization parameter $\alpha_E$ is small, more precisely, is such that $\alpha_E<1$, an important amount of spurious  eigenvalues arise on the computed spectrum which start to vanish when $\alpha_E$ increases. This phenomenon is clear for both families of meshes $\CT_h^1$ and $\CT_h^2$. For other families of polygonal meshes the results are similar. Also in Figure \ref{fig:p&u_L} we present plots of the magnitude of the computed  velocity and pressure, with different meshes.

\begin{table}[H]
\label{tablaespureos1}
\begin{center}
\caption{Computed eigenvalues for different values of $\alpha_E$ with $\mathcal{T}_{h}^{1}$.}
\begin{tabular}{|c|c|c|c|c|c|c|} \hline
 $\alpha_E$=1/32 & $\alpha_E$=1/16 & $\alpha_E$=1/4 & $\alpha_E$=1 & $\alpha_E$=4 & $\alpha_E$=16 & $\alpha_E$=32 \\ \cline{1-7}
\fbox{1.4756} & 2.0870 &   2.4106  &  2.4592   & 2.4699   & 2.4725&2.4729\\
 \fbox{1.6460}&  \fbox{2.9541} &   5.0781 &   5.8418  &  6.1009  &  6.1942&  6.2204\\
\fbox{1.7314}&   \fbox{3.4238}&  12.2493 &  14.9763  & 15.2397 &  15.3516& 15.3869\\
\fbox{1.7403}&    \fbox{3.4620} &  12.9070  & 21.1375  & 22.3902 &  22.6216& 22.6584\\
\fbox{1.7434}&    \fbox{3.4755} &   \fbox{13.4713}  & 24.3622  & 26.5618 &  27.0429 & 27.1458\\
 \fbox{1.7461}&   \fbox{3.4866} &   \fbox{13.5881}  & 37.6233  & 43.4899 &  44.4647& 44.6536\\
 \fbox{1.7465}&  \fbox{3.4883} &   \fbox{13.7754}  & 40.5498  & 46.3123 &  47.5366& 47.8232\\
 \fbox{1.7476}&    \fbox{3.4931}  &  \fbox{13.8329}  & 44.8864  & 62.6882 &  64.8430& 65.1451\\
  \fbox{1.7476}&    \fbox{3.4931}  &  \fbox{13.9038}  & 45.6918  & 62.8106 &  65.2323 & 65.6622\\
\fbox{1.7482}&   \fbox{3.4954}  &  \fbox{13.9206}  & 51.1740  & 73.0533  & 74.6701 & 75.0219
 \\ \hline
\end{tabular}
\end{center}
\end{table}

\begin{table}[H]
\label{tablaespureos2}
\begin{center}
\caption{Computed eigenvalues for different values of $\alpha_E$ with $\mathcal{T}_{h}^{2}$.}
\begin{tabular}{|c|c|c|c|c|c|c|} \hline
 $\alpha_E$=1/32 & $\alpha_E$=1/16 & $\alpha_E$=1/4 & $\alpha_E$=1 & $\alpha_E$=4 & $\alpha_E$=16& $\alpha_E$=32 \\ \cline{1-7}
1.3079& 1.9108 &   2.3682  &  2.4508    &2.4693  &  2.4738&2.4746 \\
 \fbox{1.4751}&   2.6176&    4.7627 &   5.7418   & 6.1175  &  6.2326&6.2538\\
 \fbox{1.5773}  &  \fbox{3.1053}&   10.8813  & 14.9485 &  15.2728 &  15.3987&15.4251\\
 \fbox{1.5888}  &   \fbox{3.1537} &  11.6653  & 20.2761  & 22.3258  & 22.7300& 22.7935\\
 \fbox{1.5929}   &  \fbox{3.1711} &   \fbox{12.2935}  & 23.3574  & 26.5470  & 27.1798&27.2809\\
  \fbox{1.5965}   &  \fbox{3.1857} &   \fbox{12.5435}  & 36.2960  & 43.3638  & 44.5662&44.7522\\
 \fbox{1.5970}    & \fbox{3.1879} &   \fbox{12.5978} &  38.9726  & 46.2787  & 47.8972&48.1768\\
 \fbox{1.5985}    & \fbox{3.1940} &   \fbox{12.6964} &  40.1479  & 61.8863  & 65.7328&66.2699\\
 \fbox{1.5986}    & \fbox{3.1946} &   \fbox{12.7105} &  41.7956  & 62.5039  & 66.1776&  66.7132\\
   \fbox{1.5993}  & \fbox{3.1973}  &  \fbox{12.7546}  & 47.2934  & 73.3563  & 75.1252&75.4144 
 \\ \hline
\end{tabular}
\end{center}
\end{table}

    The natural question now is if the refinement of the meshes causes some behavior on the spurious eigenvalues. To observe this, in Table \ref{tab21}
    we report the computed eigenvalues for $\alpha_E = 1/16$ and different refinements of the meshes $\CT_h^1$ and $\CT_h^2$.

\begin{table}[H]
\begin{center}
\caption{\label{tab21} First ten approximated eigenvalues for $\mathcal{T}_{h}^{1}$, $\mathcal{T}_{h}^{2}$ and $\alpha_E = 1/16$.}
\scalebox{0.85}{\begin{tabular}{|c|c|c|c|c|c|c|c|c|c|} \hline
\multicolumn{5}{ |c| }{$\mathcal{T}_{h}^{1}$} &\multicolumn{4}{ |c| }{$\mathcal{T}_{h}^{2}$}\\\hline
$\l_{i,h}$ & $N = 8$ & $N = 16$ & $N = 32$ & $N = 64$ &$N = 8$ & $N = 16$ & $N = 32$ & $N = 64$ \\ \hline \hline 
 $\l_{1,h}$ & 2.0870  &  2.4062  &  2.4536    &2.4640 &  1.9108 & 2.3625 & 2.4434 & 2.4675 \\
 $\l_{2,h}$ &  \fbox{2.9541} &   5.0980   & 5.9016    &6.1662 &2.6176 & 4.7627 &5.7403&6.2711\\
 $\l_{3,h}$ &  \fbox{3.4238}&    \fbox{12.1729}  & 15.0548  & 15.3446& \fbox{3.1053} & 10.7987 &14.9670&15.4816 \\
 $\l_{4,h}$ &  \fbox{3.4620}  & 12.8841 &  20.7115 &  21.9155&\fbox{3.1537} & \fbox{11.6268}& 19.7656&22.2157 \\
 $\l_{5,h}$ & \fbox{3.4755}&    \fbox{13.5330} &  24.3679  & 26.5839& \fbox{3.1711}&  \fbox{12.2229} &23.1339&27.1272 \\
 $\l_{6,h}$ & \fbox{3.4866}  & 13.5547  & 36.9583  & 42.3002& \fbox{3.1857}&  \fbox{12.5104} &35.4604&43.3846 \\
 $\l_{7,h}$ &  \fbox{3.4883} &   \fbox{13.7505} &  40.8357 &  46.9367&\fbox{3.1879}&  \fbox{12.5338} & \fbox{38.5668}& 48.4105 \\
 $\l_{8,h}$ &  \fbox{3.4931} &   \fbox{13.7849} &  43.3386 &  59.0853&  \fbox{3.1940}&  \fbox{12.6514} & \fbox{38.8406}&61.7552\\
 $\l_{9,h}$ &   \fbox{3.4931}   & \fbox{13.8772} &  45.3771  & 61.8600& \fbox{3.1946}&  \fbox{12.6648} &41.0988&64.6454\\
 $\l_{10,h}$ &  \fbox{3.4954} &   \fbox{13.8772}&   50.2525 &  73.6216&   \fbox{3.1973} & \fbox{12.7087} &45.7664&75.3587\\ \hline
\end{tabular}}
\end{center}
\end{table}
    
   Table \ref{tab21} reveals that a refinement strategy is capable to avoid the spurious eigenvalues from the spectrum. This is an important fact that confirms the good properties of the NCVEM on our eigenvalue context. In fact, we observe that when $\alpha_E = 1/16$ is considered, the spectrum gets cleaner when the mesh is refined. Moreover, this test suggests that $\alpha_E = 1$ is a suitable value to be considered for the approximation as in, for instance, \cite{MR3959470}.
   
   \subsection{Effect of the stabilization}\label{new:experiment}
We now present an experiment where we consider different stabilization parameters for the right-hand side. The idea is to observe the accuracy and the possible spurious eigenvalues in the computation of the spectrum. In \cite{MR4177014}, the authors raise the issue of selecting appropriate parameters for eigenvalue computations, particularly when the discretization method, such as VEM and NCVEM, requires fine-tuning of these parameters. This experiment considers the scalar stabilization for the left-hand side symmetric term and right-hand side term. More precisely, we consider the stabilized bilinear forms for $a_{h,\text{sym}}^K( \cdot, \cdot)$ and $\widetilde{c}_{h}^K(\cdot, \cdot)$:
\begin{equation}
\begin{split}
S^K(\bw_h,\bv_h)&= \alpha_K h_k^{d-2} \sum_{z=1}^{N^{\text{dof}}} \text{dof}_z( \bw_h) \text{dof}_z(\bv_h) \\
\widetilde{S}^K(\bw_h,\bv_h)&= \beta_K h_k^{d} \sum_{z=1}^{N^{\text{dof}}} \text{dof}_z(\bw_h) \text{dof}_z(\bv_h),
\end{split}
\end{equation}
where 
\begin{equation}
\widetilde{c}_{h}^K(\bw_h, \bv_h):=c^K(\BPIZ \bw_h,\BPIZ \bv_h)+ \widetilde{S}^K(\bw_h-\BPIZ \bw_h,\bv_h-\BPIZ \bv_h)
\end{equation}
This test consider the stabilization parameters
 $$\alpha_K, \beta_K \in \{ 0, 10^{-3},10^{-2}, 10^{-1}, 1, 10, 10^2, 10^3 \}.$$  
 As in the previous tests, we have considered $k=1$. Several comparisons of the eigenvalue behavior for different mesh sizes and stabilization parameters are presented in Figures~ \ref{alphadiv} to \ref{betaMass2}. In Figure~\ref{alphadiv}, we observe that the method can become unstable when removing the stabilization $\alpha_K$. The most accurate results occur near $\alpha_K=1$. An over prediction of the eigenvalues is observed for the coarse mesh when using high scaling values, but improves when the mesh is refined in Figure~\ref{alphadiv2}. This behavior of 
 the scheme is independent of choice of polygonal meshes since we have observed similar results in left and right panels  of Figure~\ref{alphadiv} where we have used quad ($\CT_h^2$) and Voronoi ($\CT_h^4$)    meshes respectively. In Figure~\ref{alphadiv}, and Figure~\ref{alphadiv2}, we have considered $\beta_K=0$.
In Figure~\ref{betaMass1}, and Figure~\ref{betaMass2},  we have the results from assuming $\alpha_K=1$ for the left-hand side, and changing $\beta_K$. Here, we have plotted each of the first ten eigenvalues (including their multiplicity) for each of the stabilization parameter choices. By comparing the results between the two meshes, we can observe that the eigenvalues do not change significantly for small values of mass matrix stabilization $\beta_K$. In fact, the accuracy improves for smaller mesh sizes. We support these results with those portrayed in Figure~\ref{betaMass2}, where we observe a comparison of the eigenvalue behavior for different mesh sizes and two choices stabilization parameters. It is clear that the discrete eigenvalues converges to the exact values, independent of $\beta_K$.
In summary, the results are in good agreement with the findings in \cite[Section 3]{MR4177014}, which show that small or nonexistent stabilization for the right-hand side is the safest choice, while the left-hand side behaves more accurately with default stabilization and near-one scaling. Note that we have also obtained a spurious spectrum for large values of $\beta_K$. Hence, the non-conforming scheme is significantly affected by spurious eigenvalues arising from the high stabilization of the right-hand side. This result is similar with those reported in \cite{MR4050542} where the authors present a test to study the robustness of the VEM to approximate the eigenvalues for the spectral elasticity problem. 

\begin{figure}[h]
	\begin{center}
			\centering\includegraphics[height=5.5cm, width=5.5cm]{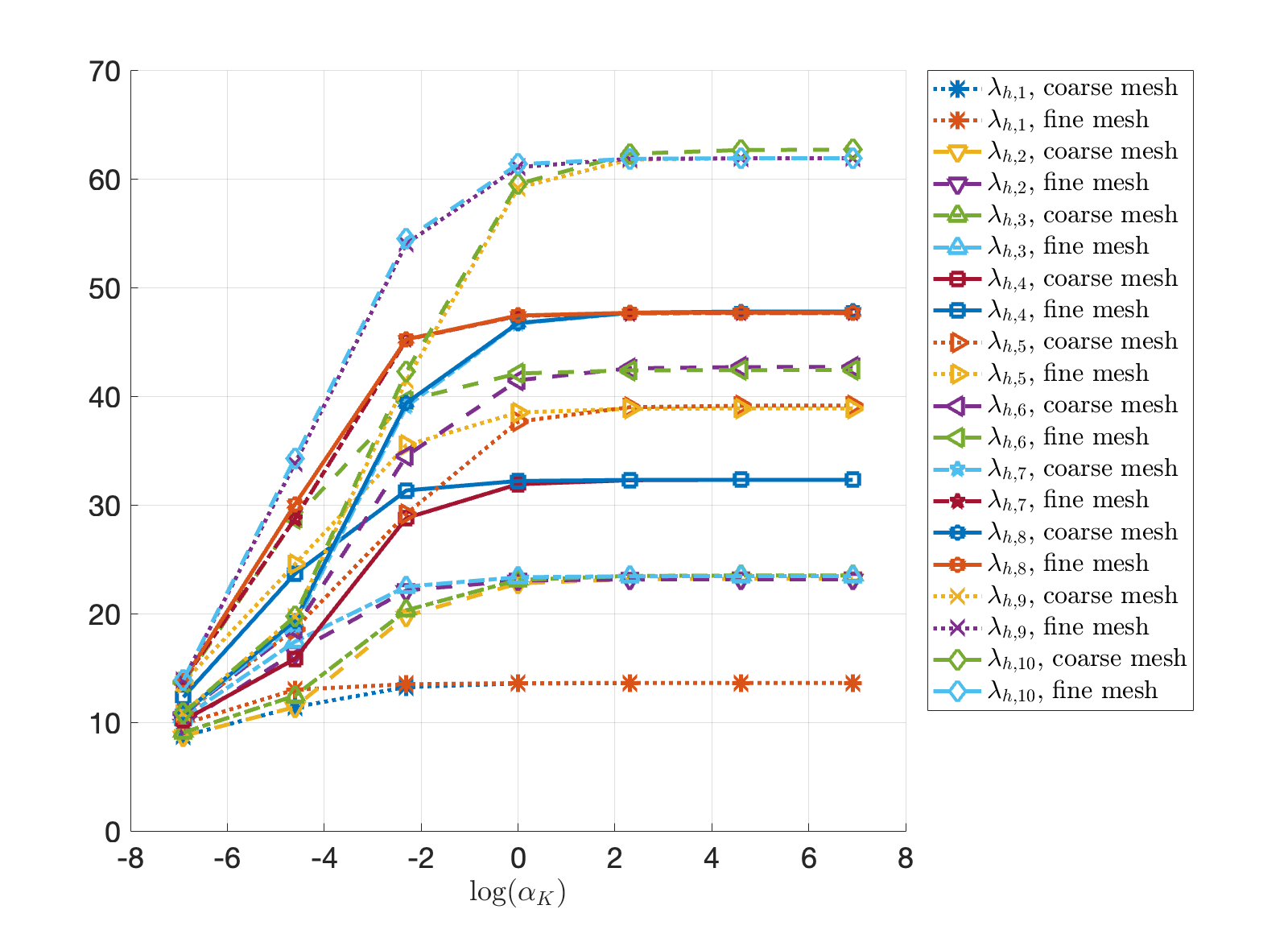}
			\centering\includegraphics[height=5.5cm, width=5.5cm]{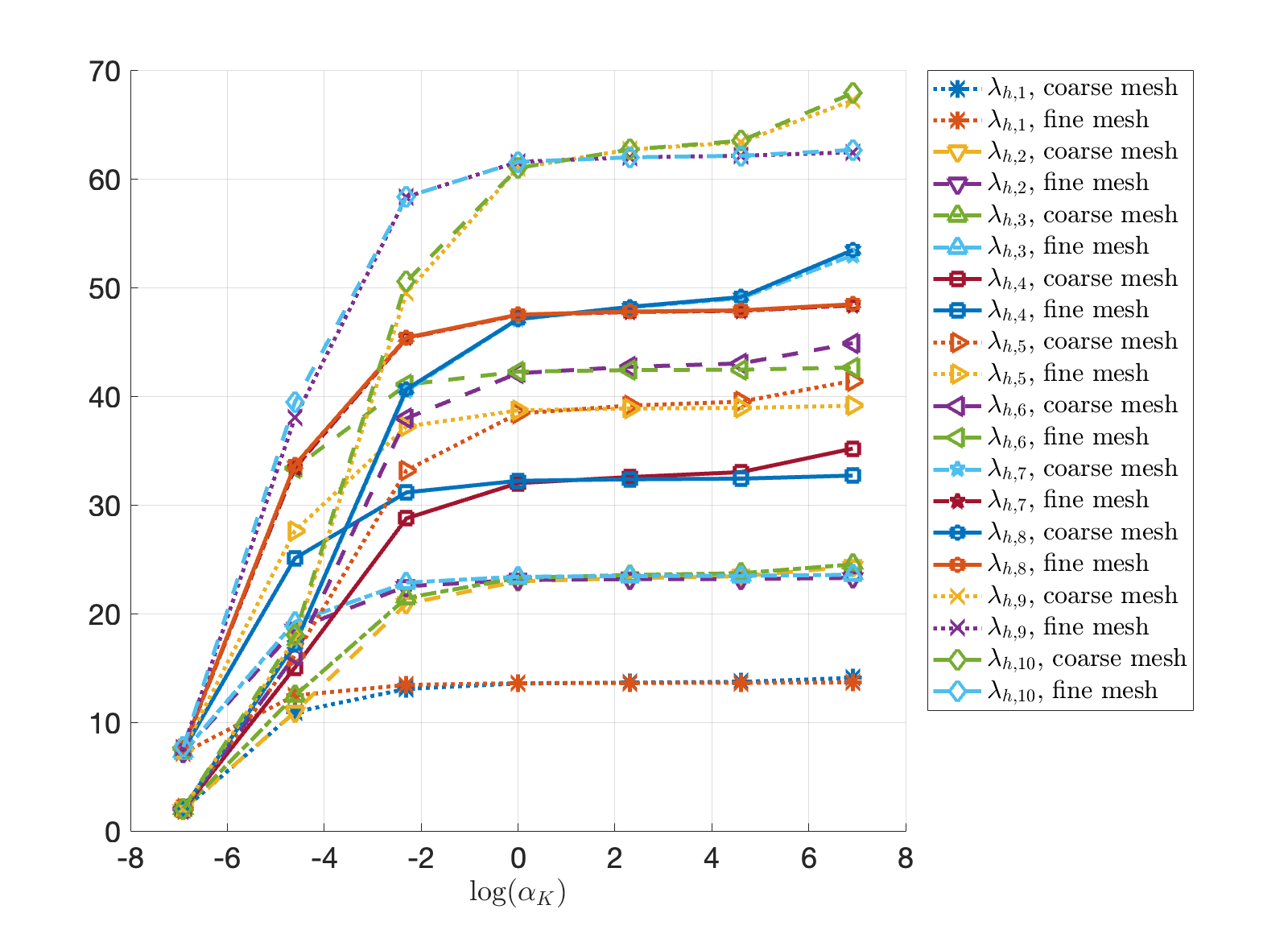}
		\caption{ Comparison between the computed eigenvalues for a coarse mesh ($h=1/32$) and a fine mesh ($h=1/64$) with respect to different choices of $\alpha_K$ . The domain is $\Omega=(-1,1)^2$ discretized using $\CT_h^2$ (left panel), and $\CT_h^4$ (right panel). The eigenvalues are plotted with their corresponding multiplicity.}
		\label{alphadiv}
	\end{center}
\end{figure}

\begin{figure}[h]
	\begin{center}
			\centering\includegraphics[height=5.5cm, width=5.5cm]{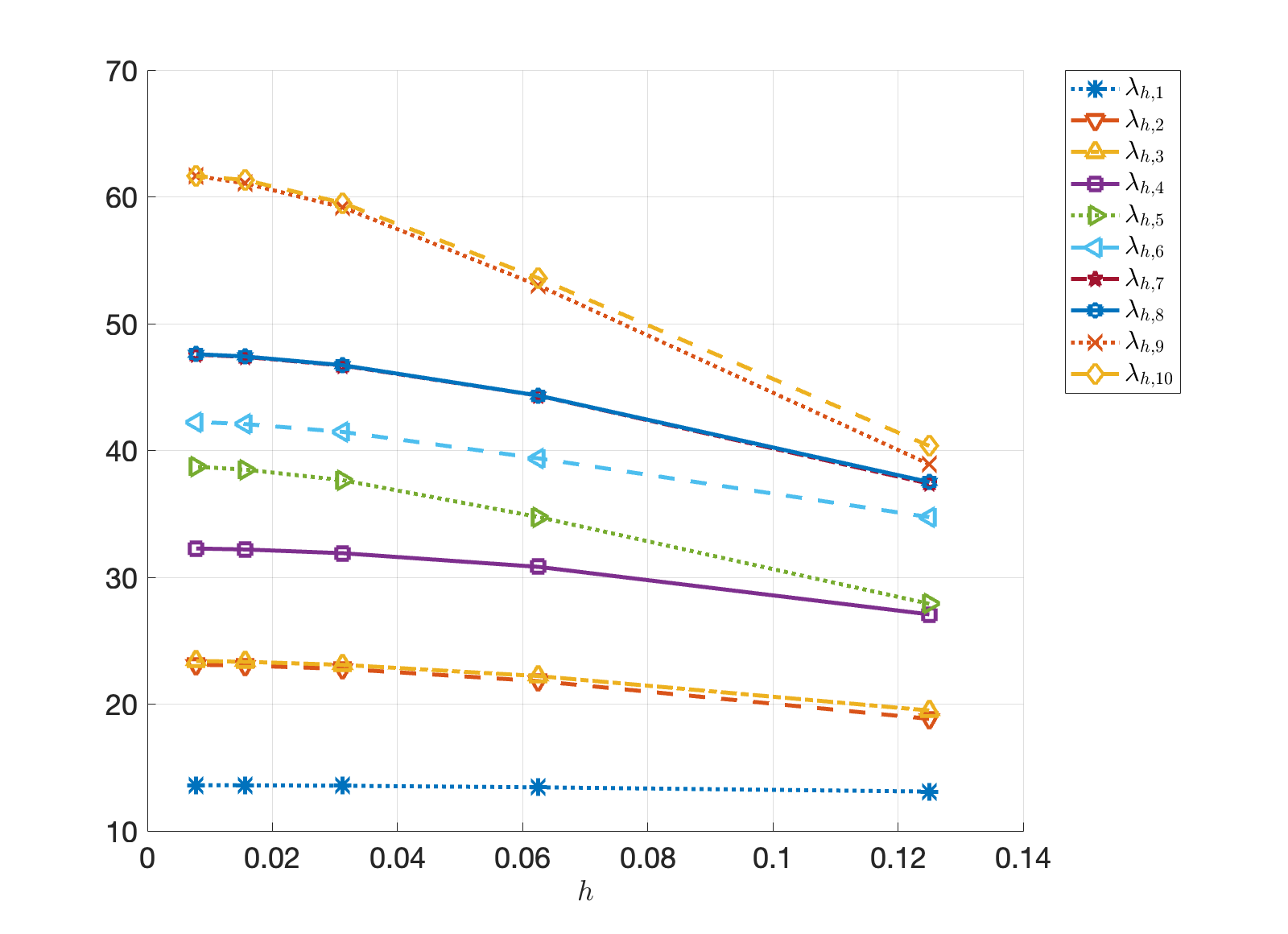}
			\centering\includegraphics[height=5.5cm, width=5.5cm]{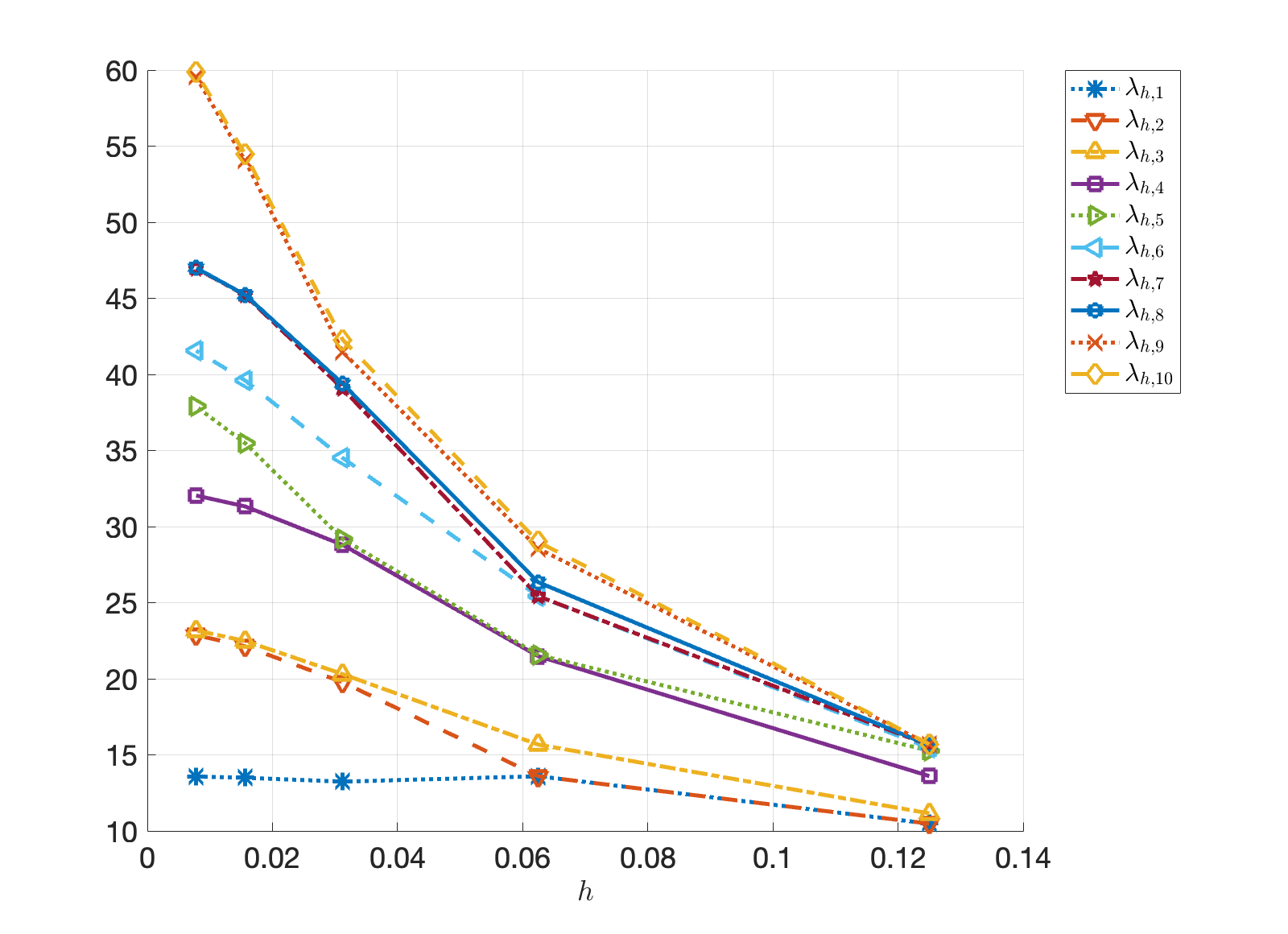}
		\caption{Comparison of the accuracy of the non-conforming virtual element scheme for $\beta_K=0$ and different values of $\alpha_K$ on the square domain $\Omega=(-1,1)^2$  using $\CT_h^2$. The eigenvalues are plotted with their corresponding multiplicity. We have picked out $\alpha_K=1$ on left panel and $\alpha_K=1/10$ on right panel}
		\label{alphadiv2}
	\end{center}
\end{figure}

\begin{figure}[h]
	\begin{center}
			\centering\includegraphics[height=5.5cm, width=5.5cm]{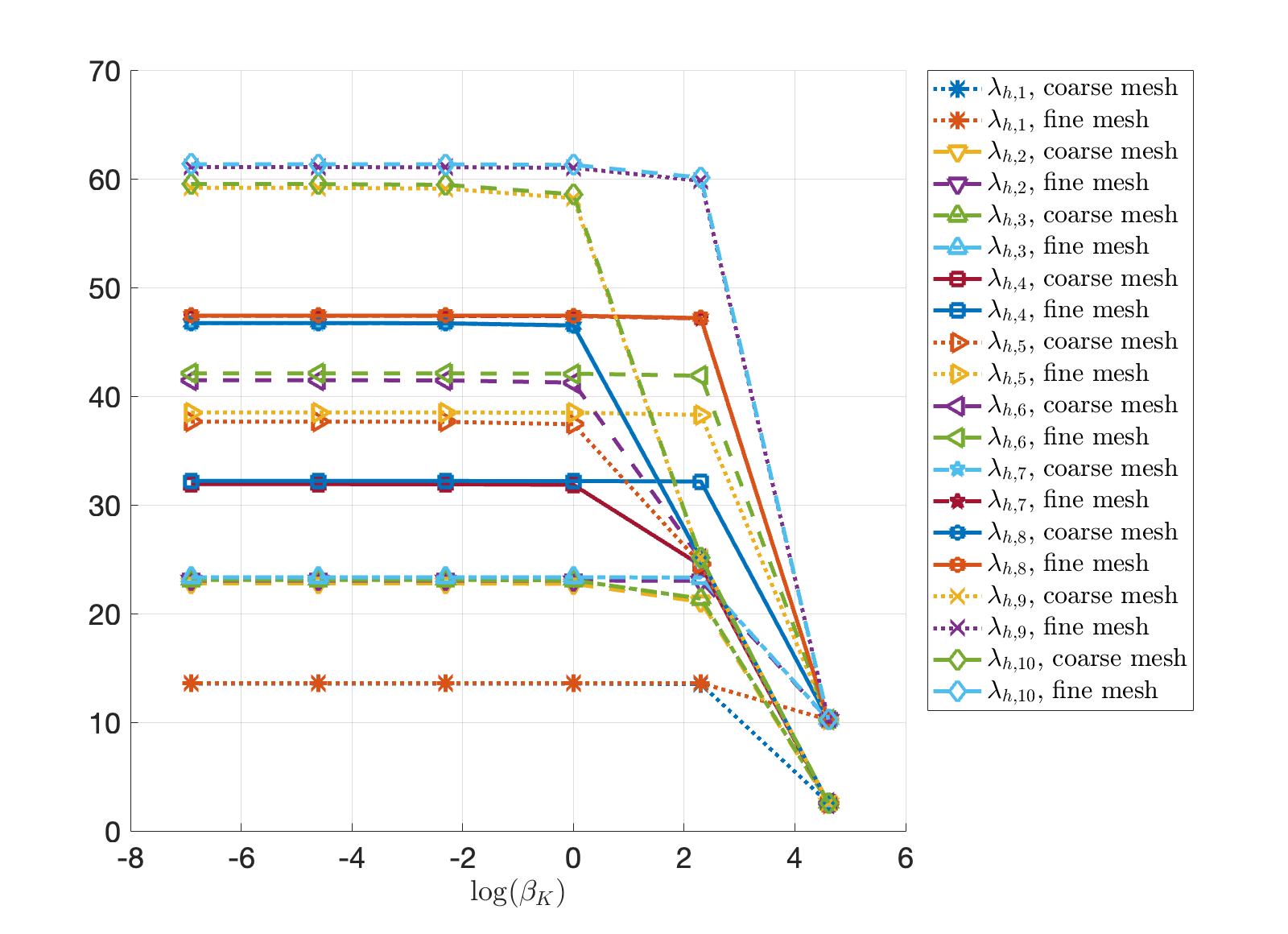}
			\centering\includegraphics[height=5.5cm, width=5.5cm]{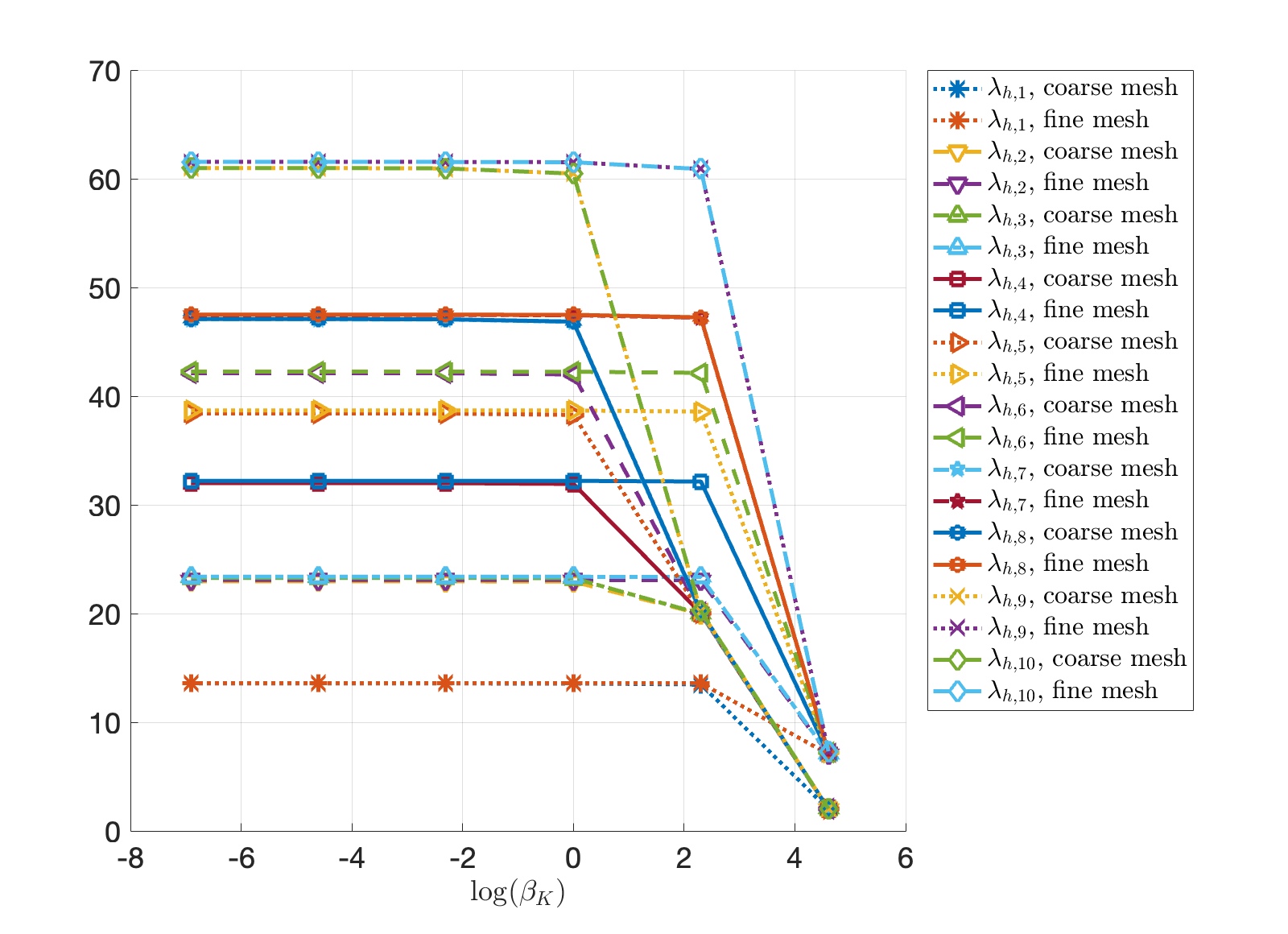}
		\caption{Comparison between the computed eigenvalues for a coarse mesh ($h=1/32$) and a fine mesh ($h=1/64$) with respect to different choices of $\beta_K$, and $\alpha_K=1$. The domain is $\Omega=(-1,1)^2$ discretized using $\CT_h^2$ (left panel), and $\CT_h^4$ (right panel). The eigenvalues are plotted with their corresponding multiplicity.}
		\label{betaMass1}
	\end{center}
\end{figure}

\begin{figure}[h]
	\begin{center}
			\centering\includegraphics[height=5.5cm, width=5.5cm]{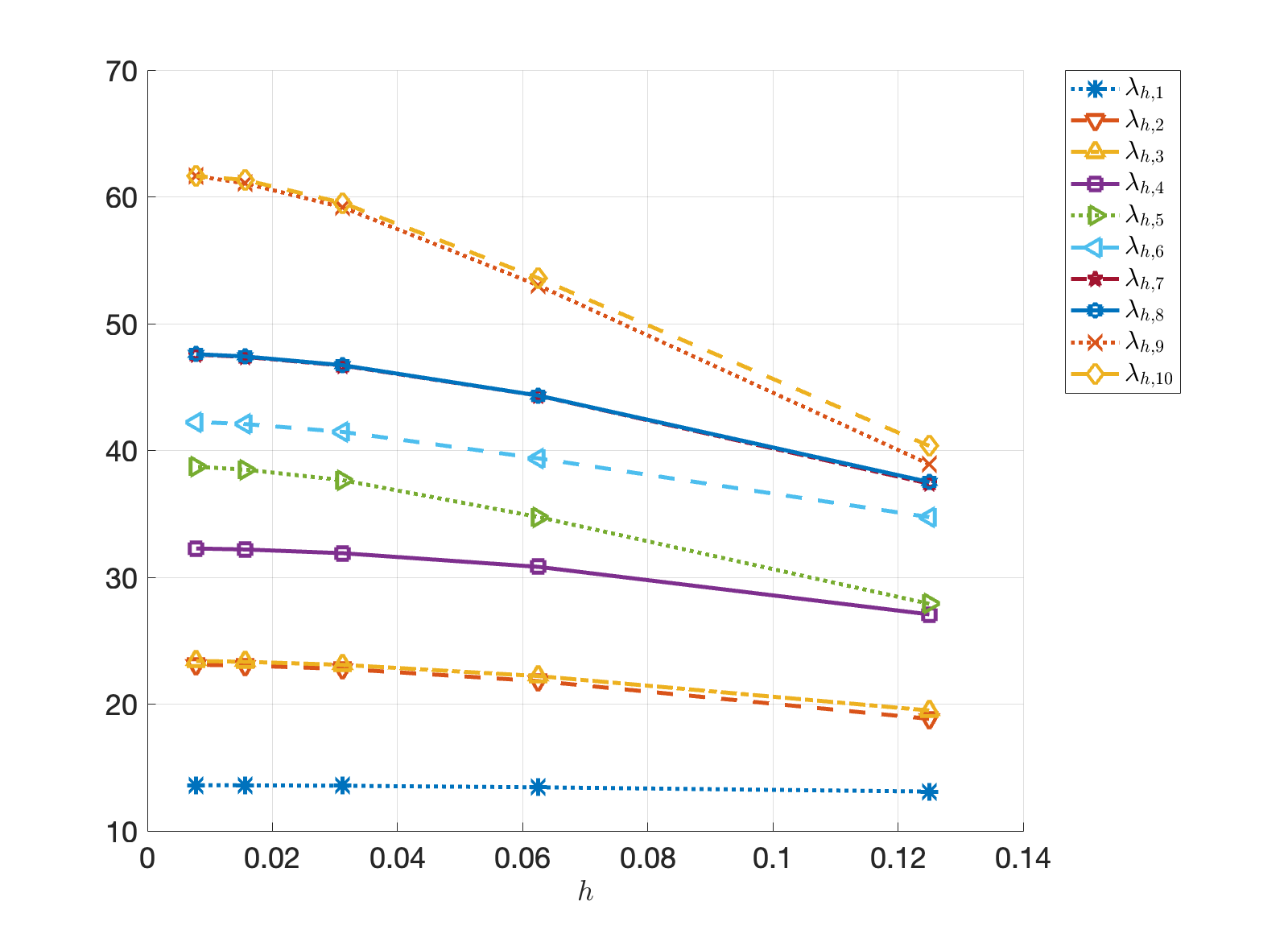}
			\centering\includegraphics[height=5.5cm, width=5.5cm]{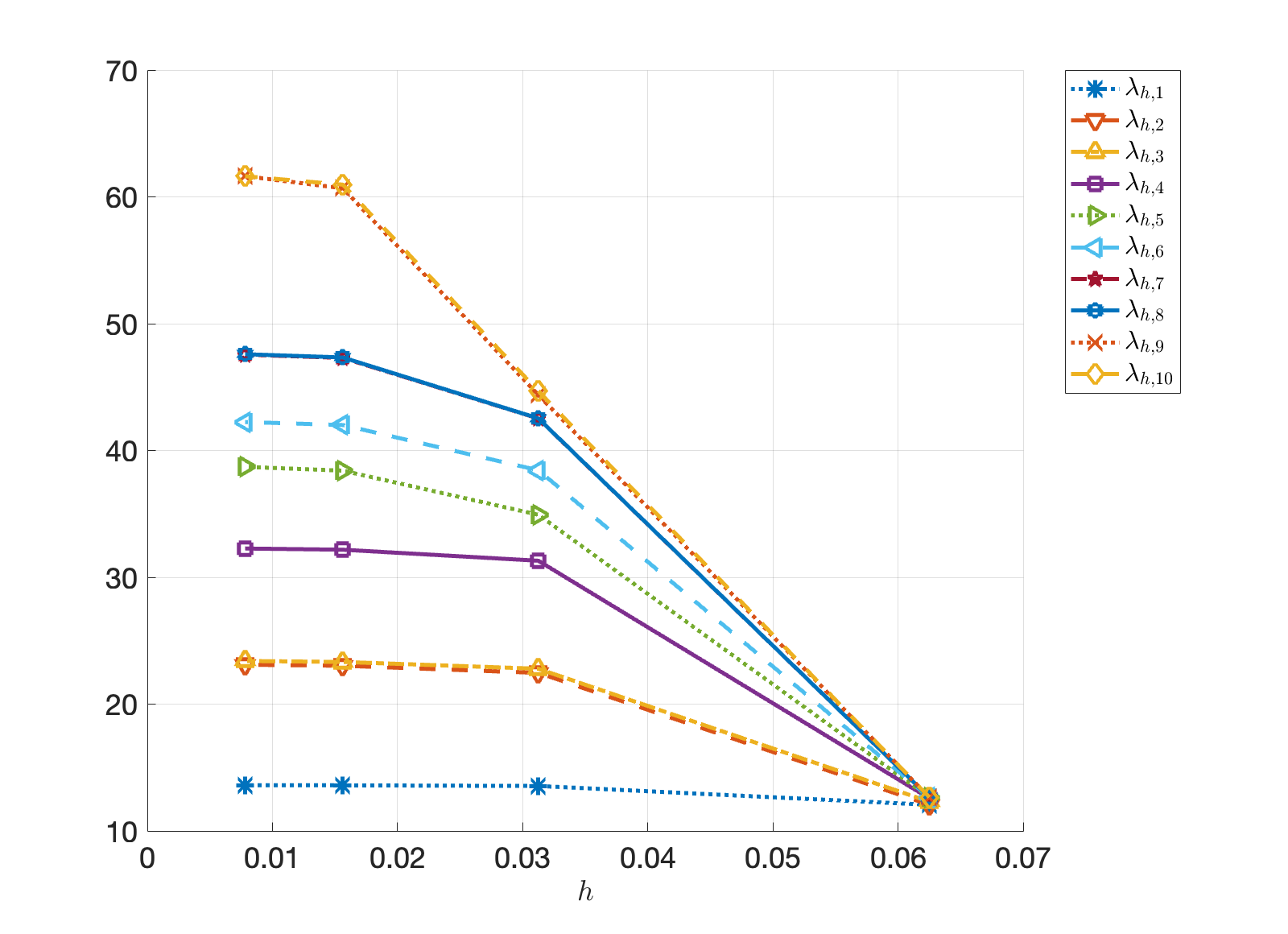}
		\caption{Comparison of the accuracy of the non-conforming virtual element scheme for $\alpha_K=1$ and different values of $\beta_K$ on the square domain $\Omega=(-1,1)^2$  using $\CT_h^2$. The eigenvalues are plotted with their corresponding multiplicity. We have picked out $\beta_K=0$ on left panel and $\beta_K=10$ on right panel}
		\label{betaMass2}
	\end{center}
\end{figure}

\section{Conclusion}
For the nonsymmetric Oseen eigenvalue problem, we have presented a divergence-free, arbitrary-order accurate, nonconforming virtual element approach that applies to highly generic shaped polygonal domains.  We performed a convergence study of the eigenfunctions using a solution operator on the continuous space. In addition, we utilized the idea of compact operators to define the discrete operator associated to the discrete problem and demonstrate the convergence of the approach. In the end, we were able to retrieve the double order of convergence of the eigenvalues by taking use of the extra regularity of the eigenfunctions. Our next area of interest will be a continuation of the analysis with minimum regularity of the eigenfunctions.

\bibliographystyle{siamplain}
\bibliography{oseen-eigenvalue}
\end{document}